\date{Janvier 2011}
\title{Relations de dépendance \\ et intersections exceptionnelles}
\author{Antoine CHAMBERT-LOIR}
\address{Universit\'e de Rennes~1 \&\ Institut universitaire de France\\
Irmar \&\ UFR de mathématiques \\
Campus de Beaulieu\\
F--35042 Rennes Cedex}
\email{antoine.chambert-loir@univ-rennes1.fr}
\def\cf{\emph{cf.}\nobreak\xspace}
\def\resp{\emph{resp.}\nobreak\xspace}
\def\codim{\operatorname{codim}}
\def\Id{\operatorname{Id}}
\def\rg{\operatorname{rang}}
\def\End{\operatorname{End}}
\def\Pic{\operatorname{Pic}}
\def\NS{\operatorname{NS}}
\def\Gal{\operatorname{Gal}}
\def\GL{\operatorname{GL}}
\def\Z{\mathbf Z}
\def\N{\mathbf N}
\def\C{\mathbf C}
\def\F{\mathbf F}
\def\Q{\mathbf Q}
\def\P{\mathbf P}
\def\R{\mathbf R}
\def\bQ{\overline{\mathbf Q}}
\def\gm{\mathbf G_{\mathrm m}}
\def\tors{{\mathrm {tors}}}
\def\oa{{\mathrm {oa}}}
\def\ta{{\mathrm {ta}}}
\def\ano{{\mathrm {ano}}}
\let\ra\rightarrow
\let\bar\overline
\let\phi\varphi\let\eps\varepsilon
\let\leq\leqslant \let\geq\geqslant
\def\abs#1{\left|#1\right|}
\def\norm#1{\left\|#1\right\|}
\begin{document}

\begin{abstract}
L'exposé sera consacré au résultat suivant, issus des travaux de
Bombieri, Masser, Zannier et Maurin: Soit $C$ une courbe algébrique
(irréductible) complexe  et considérons $n$ fonctions
rationnelles $f_1,\dots,f_n$ non identiquement nulles
et multiplicativement indépendantes sur $C$. 
Les points~$x$ de~$C$ où leurs valeurs $f_1(x),\dots,f_n(x)$
vérifient au moins deux relations de dépendance multiplicative
indépendantes forment un ensemble fini.

Nous discuterons les généralisations conjecturales de ce théorème
(Bombieri, Masser, Zannier; Zilber; Pink) concernant la finitude
des points d'une sous-variété~$X$ de dimension~$d$ d'une variété
semi-abélienne~$A$ qui appartiennent à un sous-groupe algébrique
de codimension~$>d$ dans~$A$, leurs relations avec les théorèmes
de type Mordell--Lang ou Manin--Mumford et, dans le cas arithmétique,
les résultats récents (Habegger; Rémond) concernant la hauteur des
points appartenant à un sous-groupe algébrique de codimension~$d$.
\end{abstract}

\maketitle


\section{Relations de dépendance}\label{sec.depend}

Cet exposé est consacré à un ensemble de travaux 
apparus depuis une quinzaine d'années sous la plume de divers mathématiciens
autour de ce qu'on appelle maintenant la \emph{conjecture de Zilber--Pink}.
Je voudrais débuter avec le cas le plus simple et le plus frappant.

\begin{theo}\label{theo.maurin}
Soit $C$ une courbe algébrique complexe (irréductible)
et considérons $n$ fonctions rationnelles $f_1,\dots,f_n$ 
non identiquement nulles et multiplicativement indépendantes sur~$C$.
Alors, les points~$x$ de~$C$  où leurs valeurs $f_1(x),\dots,f_n(x)$
vérifient au moins deux relations de dépendance multiplicative
indépendantes forment un ensemble fini.
\end{theo}

Dire que $f_1,\dots,f_n$ sont multiplicativement indépendantes
signifie que pour tout vecteur non nul $(a_1,\dots,a_n)\in\Z^n$,
la fonction rationnelle $f_1^{a_1}\dots f_n^{a_n}$ sur~$C$
n'est pas constante de valeur~$1$.

De même, si $x$ est un point de~$C$ qui n'est ni un zéro ni un pôle
des~$f_i$, les relations de dépendance multiplicative entre les valeurs
$f_i(x)$ des~$f_i$ sont les vecteurs $(a_1,\dots,a_n)$ de~$\Z^n$
tels que $\prod_{i=1}^n f_i(x)^{a_i}=1$. Ces relations forment un sous-groupe
de~$\Z^n$; le théorème concerne les points~$x$ pour lesquels
ce sous-groupe est de rang~$\geq 2$.

Sous l'hypothèse que les fonctions~$f_i$ sont multiplicativement
indépendantes modulo les constantes, c'est-à-dire qu'aucune
combinaison non triviale $\prod f_i^{a_i}$ n'est constante,
ce théorème a été démontré par E.~\textsc{Bombieri}, D.~\textsc{Masser}
et U.~\textsc{Zannier} dans l'article~\cite{bombieri-masser-zannier:1999}
qui, le premier, a mis en avant  ces questions.
L'hypothèse supplémentaire a été levée par
G.~\textsc{Maurin}~\cite{maurin:2008}, puis, par une autre approche,
par \textsc{Bombieri}, P.~\textsc{Habegger}, \textsc{Masser} 
et \textsc{Zannier}~\cite{bombieri-habegger-masser-zannier:2010}.
Ces articles reposent sur des techniques de géométrie
diophantienne et supposent en outre que toute la situation
est définie sur le corps~$\bQ$ des nombres algébriques.
Dans l'intervalle, l'article~\cite{bombieri-masser-zannier:2008b}
démontre que ce cas entraîne le théorème sur le corps des nombres
complexes.

Remarquons pour finir que l'énoncé est optimal au sens
où l'ensemble des points~$x$ de~$C$ où les valeurs $f_1(x),\dots,f_n(x)$
sont multiplicativement dépendantes est \emph{infini}
(à moins que  les~$f_i$ ne soient toutes constantes).
Si, disons, $f_1$ n'est pas constante,
il suffit de considérer l'ensemble des points~$x$ de~$C$
tels que $f_1(x)$ soit une racine de l'unité.

\medskip

Malgré la simplicité de l'énoncé ci-dessus, il convient de l'écrire
dans le cadre plus général des groupes algébriques commutatifs,
voire des variétés de Shimura mixtes.
Il s'insère alors dans un faisceau de conjectures dues
à B.~\textsc{Zilber}~\cite{zilber:2002},
S.-W.~\textsc{Zhang} (non publié, voir~\cite{bombieri-masser-zannier:2006}),
R.~\textsc{Pink}~\cite{pink:2005b} (voir aussi~\cite{pink:2005})
et
\textsc{Bombieri}, \textsc{Masser}, \textsc{Zannier}~\cite{bombieri-masser-zannier:2006}.
Par ailleurs, et ce n'est pas l'aspect le moins fascinant du sujet,
ces conjectures complètent les conjectures de type 
Manin--Mumford, Mordell--Lang et André--Oort.
Toutefois, je me limiterai
au cas des groupes algébriques dans ce rapport et
ne dirai rien de la conjecture d'André--Oort
dont l'importance et la beauté des derniers développements, 
dus pour l'essentiel à J.~\textsc{Pila} (voir~\cite{pila:2011}),
exigent qu'un exposé autonome leur soit consacré.
Signalons quand même qu'ils trouvent leur origine
dans la nouvelle démonstration 
de la conjecture de Manin--Mumford
qu'ont découverte \textsc{Pila} et \textsc{Zannier}~\cite{pila-zannier:2008}
et que ces techniques jouent un rôle dans l'étude de questions
voisines que nous évoquerons à la fin de ce rapport.

Revenons au théorème~\ref{theo.maurin}.
Puisque l'on discute de relations de dépendance multiplicative,
introduisons donc le groupe multiplicatif (complexe)
$\gm=\mathbf A^1\setminus\{0\}$ et considérons la
famille des fonctions $f_1,\dots,f_n$
comme une application rationnelle~$f$ de~$C$ 
dans le \emph{tore algébrique}~$G=\gm^n$.
Notons $X$ son image, ou plutôt l'adhérence,
pour la topologie de Zariski dans~$G$, de l'image
d'un ouvert dense de~$C$ sur lequel $f$ est définie.
Laissons de côté le cas inintéressant où les~$f_i$ sont toutes constantes;
l'application~$f$ est alors de degré fini 
et $X$ est une courbe irréductible dans~$G$.
Comme les~$f_i$ sont multiplicativement indépendantes, 
la courbe~$X$ n'est contenue dans aucun sous-groupe algébrique strict
de~$G$: de tels sous-groupes sont en effet définis par des équations monomiales
$g_1^{a_1}\dots g_n^{a_n}=1$ en les coordonnées $(g_1,\dots,g_n)\in\gm^n$.
Plus précisément, les sous-groupes algébriques (pas forcément connexes)
de~$G$ sont en bijection avec les sous-modules de~$\Z^n$,
la codimension d'un sous-groupe étant égale au rang du module de ses relations.
Pour tout entier~$r\in\N$,
notons ainsi $G^{[r]}$ la réunion des sous-groupes algébriques de~$G$
qui sont de codimension~$\geq r$; c'est aussi l'ensemble des points
$(g_1,\dots,g_n)\in\gm^n$ qui vérifient $r$~relations de dépendance
multiplicative indépendantes.
Ainsi, le théorème~\ref{theo.maurin}
équivaut à l'énoncé suivant:
{\setcounter{defi}{0}\def\thedefi{\thesection.\arabic{defi}$'$}
\begin{theo}\label{theo.maurin'}
Soit $X$ une sous-variété fermée de~$\gm^n$,  irréductible et de dimension~$1$,
qui n'est contenue dans aucun sous-groupe algébrique strict.
L'ensemble $X\cap G^{[2]}$ 
des points~$x$ de~$X$ qui sont contenus dans un sous-groupe
de codimension~$\geq 2$ est fini.
\end{theo}}

\medskip

Les conjectures auxquelles ce rapport est consacré
visent à remplacer le groupe~$G$ par une variété semi-abélienne,
la courbe~$X$ par une sous-variété fermée de~$G$,
irréductible et distincte de~$G$, 
de dimension quelconque~$d$,
et l'ensemble~$G^{[2]}$ par un ensemble~$G^{[c]}$, où $c$ est un entier
tel que $c>d$,
voire un ensemble de la forme $\Gamma\cdot G^{[c]}$
où $\Gamma$ est un sous-groupe de rang fini de~$G$,
et même, lorsque tout est défini sur le corps des nombres
algébriques, des \og  épaississements\fg d'un tel ensemble au  
sens de la théorie des hauteurs.

Dans ce cas, une généralisation naturelle du théorème~\ref{theo.maurin'} 
est la conjecture suivante de R.~\textsc{Pink}
(voir~\cite{pink:2005}, conjecture~5.1).
Avant de l'énoncer, 
rappelons qu'une variété semi-abélienne est un groupe algébrique commutatif
qui se décompose en une extension d'une variété abélienne par un tore;
un sous-groupe algébrique connexe d'une variété  semi-abélienne
est encore une variété semi-abélienne.
\begin{conj}\label{conj.pink}
Soit $G$ une variété semi-abélienne complexe.
Soit $X$ une sous-variété fermée et irréductible de~$G$, 
de dimension~$d$.
Si $X$ n'est pas contenue dans un sous-groupe algébrique
strict de~$G$, l'intersection $X\cap G^{[d+1]}$ 
n'est pas dense dans~$X$ pour la topologie de Zariski.
\end{conj}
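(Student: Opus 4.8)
\medskip

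Le plan est de se ramener d'abord, par un argument de spécialisation dans l'esprit de~\cite{bombieri-masser-zannier:2008b}, à la situation \emph{arithmétique} où $G$, $X$ et les points considérés sont définis sur~$\bQ$, puis de combiner deux ingrédients: l'un \emph{géométrique} — la structure des sous-variétés anormales de \textsc{Bombieri}, \textsc{Masser} et \textsc{Zannier} — et l'autre \emph{diophantien} — une majoration de hauteur (\textsc{Habegger}, \textsc{Rémond}). On notera que $c=d+1$ est le seuil critique et que l'on ne demande que la non-densité: la finitude de $X\cap G^{[d+1]}$ est en général en défaut, précisément à cause des sous-variétés anormales qu'il faudra ôter.

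\emph{Étape géométrique.} Un point de $X\cap G^{[d+1]}$ appartient à un sous-groupe algébrique~$H$ de codimension~$\geq d+1$; pour un tel~$H$, l'intersection $X\cap H$ est génériquement vide, sa dimension attendue $\dim X+\dim H-\dim G$ valant $\leq -1$, et ce sont donc les intersections \og exceptionnelles\fg qui comptent. Suivant~\cite{bombieri-masser-zannier:2006}, on note $X^{\oa}$ le complémentaire dans~$X$ de la réunion de ses sous-variétés anormales de dimension~$>0$, c'est-à-dire de celles qui sont contenues dans un translaté d'un sous-groupe de dimension excessive eu égard à celle de~$X$. Le théorème de structure affirme que $X^{\oa}$ est Zariski-ouvert dans~$X$, et dense dès que $X$ n'est pas elle-même anormale. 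On distingue alors deux cas: ou bien $X$ est anormale, et un dévissage à la~\cite{bombieri-masser-zannier:2006} (quotients successifs) la remplace par une sous-variété d'une variété semi-abélienne de dimension $<\dim G$, d'où une récurrence; ou bien il suffit de montrer que $(X\setminus X^{\oa})\cap G^{[d+1]}$ n'est pas Zariski-dense, et l'on peut désormais supposer finies les intersections $X\cap H$ pertinentes.

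\emph{Étape diophantienne.} Comme $X$ n'est plus anormale, elle n'est contenue dans aucun translaté propre d'un sous-groupe; le théorème de Manin--Mumford — connu pour les variétés semi-abéliennes — entraîne donc que la partie de torsion de $X\cap G^{[d+1]}$ n'est pas Zariski-dense. Les points restants ne sont pas de torsion et appartiennent à $X^{\ta}\cap G^{[1]}$: la propriété de hauteur bornée — conjecturée par \textsc{Bombieri}, \textsc{Masser} et \textsc{Zannier}, démontrée par \textsc{Habegger} dans les cas multiplicatif et abélien, puis reprise et complétée par \textsc{Rémond} — leur impose une hauteur (de Weil, \resp de Néron--Tate) bornée par une constante ne dépendant que de~$X$. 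Reste à voir que ces points de hauteur bornée de $(X\setminus X^{\oa})\cap G^{[d+1]}$ ne sont pas denses: un tel point vérifie $d+1$ relations de dépendance indépendantes, et en appliquant à une relation \og essentiellement minimale\fg une inégalité diophantienne appropriée (de type Vojta--Rémond, ou une minoration de hauteur dans le cas torique) on borne aussi son degré; le théorème de Northcott — ou bien un comptage uniforme à la \textsc{Maurin} sur les sections de~$X$ par les sous-groupes de codimension fixée — permet alors de conclure.

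\emph{L'obstacle principal} réside sans conteste dans l'étape diophantienne pour les variétés semi-abéliennes \emph{mixtes}. La propriété de hauteur bornée n'y est à ce jour acquise que dans les cas extrêmes ($\gm^n$ et variétés abéliennes); son extension aux situations réellement mixtes — où hauteurs toriques et abéliennes s'entremêlent, et où surgissent des phénomènes d'isotrivialité et des questions d'uniformité de type Mordell--Lang — demeure ouverte. De même, le passage de la hauteur bornée à la non-densité, bien maîtrisé pour $\gm^n$ (où il s'appuie sur le théorème~\ref{theo.maurin'}) et largement compris pour les variétés abéliennes, requiert dans le cas semi-abélien général des raffinements encore hors de portée. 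C'est donc cette partie arithmétique, et non la géométrie des sous-variétés anormales, qui fait aujourd'hui obstacle à la preuve de la conjecture~\ref{conj.pink}.
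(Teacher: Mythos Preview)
L'énoncé porte le label \texttt{conj} pour une bonne raison: c'est une \emph{conjecture ouverte}, et le texte de l'exposé ne la démontre pas. Il n'y a donc pas de \og preuve du papier\fg{} à laquelle comparer ton esquisse. Tu le reconnais d'ailleurs toi-même dans ton dernier paragraphe en identifiant la propriété de hauteur bornée pour les variétés semi-abéliennes mixtes comme obstacle principal; ta proposition est donc un plan d'attaque, pas une démonstration.

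Même en tant que plan, plusieurs points méritent d'être rectifiés.

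\emph{(i)} Tu écris à deux reprises $(X\setminus X^{\oa})\cap G^{[d+1]}$ là où tu veux manifestement dire $X^{\oa}\cap G^{[d+1]}$. Si $X^{\oa}$ est un ouvert dense, son complémentaire est un fermé strict et toute partie de ce fermé est trivialement non dense; c'est la partie sur $X^{\oa}$ qui demande du travail.

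\emph{(ii)} Le théorème de hauteur bornée d'\textsc{Habegger} (théorème~\ref{theo.habegger}) concerne $X^{\oa}\cap G^{[\dim X]}$, pas $X^{\ta}\cap G^{[1]}$: cette dernière intersection n'a aucune raison d'être de hauteur bornée (penser à un sous-tore de codimension~$1$ rencontrant~$X$ en une infinité de points).

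\emph{(iii)} Ton dévissage \og $X$ anormale $\Rightarrow$ récurrence sur $\dim G$\fg{} escamote précisément la difficulté. Lorsque $X$ est géométriquement dégénérée, $X^{\oa}=\emptyset$ et l'on ne peut pas simplement passer au quotient: même pour une \emph{courbe} dans $\gm^n$, la réduction au cas non dégénéré (théorème~\ref{theo.maurin}) a nécessité des arguments spécifiques et délicats (voir~\S4.3, \og Quand l'exception est la règle\fg). En dimension supérieure, cette réduction est ouverte --- c'est exactement pourquoi le théorème~\ref{theo.nd} suppose~$X$ non géométriquement dégénérée.

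\emph{(iv)} En conséquence, même pour $G=\gm^n$ et $\dim X\geq 2$, ta stratégie ne démontre pas la conjecture: la propriété de hauteur bornée est acquise sur~$X^{\oa}$, mais le passage au cas dégénéré manque. L'exposé signale d'ailleurs que la conjecture~\ref{conj.pink} n'est connue, hors hypothèse de non-dégénérescence, que pour les courbes dans les tores et quelques autres cas isolés.

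En résumé: tu as correctement identifié l'obstacle diophantien pour le cas mixte, mais tu sous-estimes l'obstacle géométrique (le dévissage du cas dégénéré), qui est lui aussi ouvert dès que $\dim X\geq 2$.
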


Cette conjecture est connue lorsque $X$ est une courbe ($d=1$) et $G$ un tore;
on retrouve le théorème~\ref{theo.maurin'}.
Pour l'essentiel, tous les autres résultats se restreignent
au cas de variétés définies sur le corps~$\bQ$ des nombres algébriques 
et peuvent donc ne considérer que des points algébriques. 
Toujours lorsque $d=1$,
un théorème de G.~\textsc{Rémond} 
établit cette variante pour les variétés abéliennes à multiplications complexes
(\cite{remond:2007}, corollaire~1.6), tandis qu'un résultat plus récent
de E.~\textsc{Viada} (\cf\cite{galateau:2010}, théorème~H)
traite le cas des variétés abéliennes
qui sont produit de variétés abéliennes à multiplications complexes, 
surfaces abéliennes et courbes elliptiques.
En dimension plus grande, à l'exception de quelques cas 
comme les sous-variétés
de codimension~$2$ d'un tore 
(théorème~1.7 de~\cite{bombieri-masser-zannier:2007}),
cette conjecture n'est démontrée que sous une hypothèse 
géométrique sur~$X$.
Introduisons une terminologie proposée par Z.~\textsc{Ran}
dans le cas des variétés abéliennes (voir~\cite{ran:1981}):
\begin{defi}
Soit $X$ une sous-variété (fermée, irréductible)
d'une variété semi-abélienne~$G$.
On dit que $X$ est \emph{géométriquement dégénérée} 
s'il existe un sous-groupe algébrique~$G'$ de~$G$, 
tel que l'image de~$X$ dans~$G/G'$
est de dimension strictement inférieure à~$\min(\dim (X),\dim (G/G'))$.
\end{defi}
Pour qu'une courbe soit géométriquement dégénérée,
il faut et il suffit qu'elle soit contenue
dans un translaté d'un sous-groupe algébrique strict.
En revanche, en dimension supérieure, il peut exister des sous-variétés
géométriquement dégénérées
qui ne sont contenues dans aucun translaté de sous-groupe algébrique.

\begin{theo}[\textsc{Habegger}, \cite{habegger:2009a,habegger:2009d}]
\label{theo.nd}
Soit $G$ une variété semi-abélienne  définie sur~$\bQ$.
Soit $X$ une sous-variété fermée, irréductible et de dimension~$d$,
définie sur~$\bQ$,
qui n'est pas géométriquement dégénérée.
Supposons de plus que $G$ soit un tore 
ou une variété abélienne à multiplications complexes.
Alors $X(\bQ)\cap G^{[d+1]}$ n'est pas dense dans~$X$
pour la topologie de Zariski.
\end{theo}

\medskip

Expliquons maintenant le lien entre la conjecture~\ref{conj.pink}
et celles de Manin--Mumford ou de Mordell--Lang.

Un point~$g$ de~$G$ est de torsion si et seulement s'il appartient
à un sous-groupe algébrique de dimension~$0$ (celui qu'il engendre!);
par suite, la conjecture~\ref{conj.pink} étend la conjecture de Manin--Mumford
qui concerne précisément les points de torsion de~$G$ 
situés sur la sous-variété~$X$.
Rappelons que cette conjecture a été démontrée 
par M.~\textsc{Laurent}~\cite{laurent84} pour les tores,
M.~\textsc{Raynaud}~\cite{raynaud83,raynaud83b} dans le cas des variétés
abéliennes, et M.~\textsc{Hindry}~\cite{hindry1988} en général.
Profitons aussi de l'occasion pour mentionner diverses preuves plus récentes:
\cite{sarnak-adams:1994},
via la preuve d'une conjecture de Bogomolov
(\cite{zhang95,bombieri-zannier:1995,david-p99,zannier:2009}  pour les tores,
\cite{ullmo98,zhang98,david-p98} pour les variétés abéliennes,
\cite{david-p2000} pour les variétés semi-abéliennes),
\emph{via} la théorie des modèles de corps aux différences
(\cite{hrushovski2001}
et \cite{pink-roessler:2002,pink-roessler:2004,roessler:2005}
qui s'en inspirent),
enfin \cite{pila-zannier:2008}.

Soit maintenant $\Gamma$ un sous-groupe de~$G$ de rang fini~$r$;
soit $(g_1,\dots,g_r)$ des éléments de~$\Gamma$
tels que $\Gamma/\langle g_1,\dots,g_r\rangle$ soit de torsion.
Reprenant un argument de \textsc{Rémond} qui consiste
à remplacer~$G$ par une puissance~$G\times G^{r}$
et $X$ par la sous-variété  $X\times\{(g_1,\dots,g_r)\}$,
\textsc{Pink} observe que la conjecture~\ref{conj.pink} est équivalente
à la conjecture suivante:
\begin{conj}\label{conj.pink-ml}
Soit $G$ une variété semi-abélienne complexe.
Soit $X$ une sous-variété fermée de~$G$, irréductible et distincte de~$G$,
de dimension~$d$.
Soit $\Gamma$ un sous-groupe de rang fini de~$G$.
Si $X$ n'est pas contenue dans un translaté de sous-groupe algébrique
strict de~$G$, l'intersection $X\cap (\Gamma\cdot G^{[d+1]})$ 
n'est pas dense dans~$X$ pour la topologie de Zariski.
\end{conj}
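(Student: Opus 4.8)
The plan is to follow the now-standard two-step strategy for unlikely intersections, adapted so as to absorb the finite-rank subgroup~$\Gamma$. First I would reduce to the arithmetic setting: by a specialization argument in the style of~\cite{bombieri-masser-zannier:2008b}, spreading~$G$, $X$ and a finite generating set of~$\Gamma$ modulo torsion out over a finitely generated $\Z$-algebra and specializing at a suitable closed point while checking that Zariski density is preserved, it should suffice to prove the statement when everything is defined over~$\bQ$ and only algebraic points need be considered. Next, following the observation of~\textsc{Pink} recalled above (\textsc{Rémond}'s device of replacing $G$ by $G\times G^r$ and $X$ by $X\times\{(g_1,\dots,g_r)\}$), one reduces Conjecture~\ref{conj.pink-ml} to Conjecture~\ref{conj.pink}, so that it is enough to control $X\cap G^{[d+1]}$ in the case where $\Gamma$ is merely the torsion subgroup of~$G$.

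The core of the argument then splits into a \emph{height bound} and a \emph{non-density} step. For the height bound, the goal is to remove from~$X$ the finite union of its maximal anomalous subvarieties — the translates of positive-dimensional algebraic subgroups along which $X$ meets the subgroup in too large a dimension — and to prove that the remaining points of $X\cap G^{[d+1]}$ have Néron–Tate height bounded independently of the point. This is exactly the content of the work of \textsc{Habegger} and \textsc{Rémond} (\cite{habegger:2009a,habegger:2009d,remond:2007}); the mechanism is an arithmetic Bézout/intersection inequality combined with a good lower bound for the normalized height (essential minimum) of a subvariety that is not of torsion type, i.e.\ a quantitative Bogomolov–Zhang estimate. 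For the non-density step, one combines this height bound with a bound on the degrees of the relevant algebraic subgroups and a counting input — either the \textsc{Pila}–Wilkie o-minimal point-counting theorem applied to the real-analytic uniformization of~$G$, or a geometry-of-numbers argument à la \textsc{Bombieri}–\textsc{Masser}–\textsc{Zannier} — to conclude that the points in question cannot be Zariski dense. Finally the discarded anomalous part is handled by induction on~$\dim G$, quotienting by the subgroup responsible for each anomalous component.

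The main obstacle — and the reason this remains a conjecture — is the height bound for a general semi-abelian~$G$: the Bogomolov-type lower bound for the essential minimum of a non-torsion subvariety is presently available only for tori and for abelian varieties with complex multiplication (and the further products handled by \textsc{Viada}, \cf\ the discussion of Theorem~\ref{theo.nd} above), which is precisely why that theorem carries its hypothesis on~$G$. Removing it would require a new proof of the Bogomolov property in this generality, or an entirely different route to controlling heights. A secondary difficulty is that the anomalous-locus machinery, hence the reduction to the geometrically non-degenerate case, is delicate for semi-abelian varieties that are genuine non-split extensions, where the lattice of algebraic subgroups is richer, and one must ensure the induction on~$\dim G$ stays within the class for which the height input is known. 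Even granting all of this, upgrading ``not Zariski dense'' to the sharp ``finite'' statement of Theorem~\ref{theo.maurin'} in dimension~$d\geq 2$ seems to demand a uniformity in the counting step that is not yet in reach.
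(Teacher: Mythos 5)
The statement you are addressing is a \emph{conjecture}, and the paper does not prove it: all it records (crediting \textsc{Pink}, via \textsc{Rémond}'s device of passing to $G\times G^r$ and $X\times\{(g_1,\dots,g_r)\}$) is that it is equivalent to Conjecture~\ref{conj.pink}. Your text is, as you yourself say, not a proof but a sketch of the standard strategy together with a diagnosis of why it does not close, and the bottom line — a missing height lower bound for general (semi-)abelian~$G$ — is essentially the right diagnosis and matches the paper's discussion.

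Nonetheless, you misplace the bottleneck and blur several distinct inputs. The bounded-height step (Theorem~\ref{theo.habegger}) does \emph{not} rest on any Bogomolov or Lehmer estimate: in the toric case it uses Siu's numerical criterion, Philippon's Bézout inequality and Dirichlet approximation, and it holds unconditionally for all tori and all abelian varieties — no CM or ordinary-reduction hypothesis. Height \emph{lower} bounds enter only at the finiteness step, where the paper records two distinct routes: a relative Lehmer estimate (Delsinne, Carrizosa; Theorem~\ref{theo.delsinne}), used in Propositions~\ref{prop.fini} and~\ref{prop.fini-abelien}, and an effective Bogomolov estimate (Amoroso--David, Galateau; Theorem~\ref{theo.galateau}), used in Viada's approach of \S 4.5; \textsc{Pila}--\textsc{Wilkie} counting plays no role here and appears in the paper only in the Masser--Zannier theorem on families of elliptic curves, which is a different problem. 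Finally, two further gaps in your plan: the specialization from $\C$ to $\bQ$ via~\cite{bombieri-masser-zannier:2008b} is carried out only for tori, and the paper explicitly flags the semi-abelian case as open; and the bounded-height theorem itself is only proved for tori and abelian varieties, so for genuinely non-split semi-abelian~$G$ even your first step is missing, not merely the lower bound.
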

Bien sûr, si $X\neq G$, $G^{[d+1]}$ n'est pas vide
et $\Gamma\cdot G^{[d+1]}$ contient~$\Gamma$,
donc la conjecture~\ref{conj.pink-ml}
entraîne en particulier 
la conjecture de Mordell--Lang selon laquelle
$X\cap\Gamma$ n'est pas dense dans~$X$.
Rappelons que celle-ci est maintenant un théorème, suite aux
travaux de P.~\textsc{Liardet}~\cite{liardet:1975}
pour les courbes dans les tores, M.~\textsc{Laurent}~\cite{laurent84}
pour les tores,
  G.~\textsc{Faltings}~\cite{faltings1991} pour les variétés abéliennes,
M.~\textsc{Hindry}~\cite{hindry1988},
  P.~\textsc{Vojta}~\cite{vojta1996}
et M.~\textsc{McQuillan}~\cite{mcquillan1995} 
pour les variétés semi-abéliennes.

\textsc{Rémond} et \textsc{Maurin} ont tiré parti
de la méthode de \textsc{Vojta} pour établir
le résultat suivant en direction de la conjecture~\ref{conj.pink-ml},
analogue avec un groupe~$\Gamma$ du théorème~\ref{theo.nd}.

\begin{theo}[\textsc{Rémond}~\cite{remond:2009},
\textsc{Maurin}~\cite{maurin:2011}]
\label{theo.nd.gamma}
Supposons que $G$ soit un tore 
ou une variété abélienne à multiplications complexes.
Soit $X$ une sous-variété fermée et irréductible de dimension~$d$
de~$G$, 
définie sur~$\bQ$ et qui n'est pas géométriquement dégénérée.
Soit $\Gamma$ un sous-groupe de rang fini de~$G(\bQ)$.
Alors $X(\bQ)\cap (\Gamma\cdot G^{[d+1]})$ n'est pas dense dans~$X$
pour la topologie de Zariski.
\end{theo}

Leur approche permet aussi d'obtenir une autre démonstration
du théorème~\ref{theo.nd}.

Par ailleurs, une approche récente d'E.~\textsc{Viada}~\cite{viada:2010}
fondée sur une forme effective de la conjecture de Bogomolov
permet d'étendre le théorème précédent
à une classe de variétés abéliennes comprenant
les produits de variétés abéliennes à multiplications complexes,
de courbes elliptiques et de surfaces abéliennes.


\medskip

Dans le \S2 de ce rapport, je décris les {\og lieux exceptionnels\fg}
que l'étude de ces conjectures oblige à prendre en compte.
Le \S3 est un rappel des concepts de géométrie diophantienne
utilisés dans les preuves: théorie des hauteurs, problème de Lehmer,
conjecture de Bogomolov. 
Dans le \S4, je résume les différentes approches des théorèmes
précédents: après la preuve d'un important
théorème de majoration de hauteurs, j'y décris
successivement l'utilisation du problème de Lehmer, de la méthode de Vojta 
et des formes effectives de la conjecture de Bogomolov.

\section{Intersections exceptionnelles}

Motivé par l'étude de la conjecture de Schanuel
dans le cadre des corps différentiels,
B.~\textsc{Zilber} avait énoncé une conjecture voisine
de la conjecture~\ref{conj.pink}
(\cite{zilber:2002}, conjecture~2).

Soit $G$ une variété semi-abélienne complexe.
Soit $X$ une sous-variété fermée de~$G$, irréductible.

\begin{defi}\label{defi.atypique}
Soit $H$ une sous-variété fermée,  équidimensionnelle de~$G$.
On dit qu'une composante irréductible~$Y$ de l'intersection $X\cap H$
est \emph{atypique}
si sa dimension vérifie
\[ \dim (Y) > \dim (X) + \dim (H)-\dim (G). \]
\end{defi}
On peut aussi écrire cette inégalité sous la forme $\codim_H(Y)<\codim_G(X)$.
Rappelons que si $X\cap H$ n'est pas vide, la théorie de l'intersection affirme
que toute composante irréductible en est de dimension supérieure
ou égale à $\dim (X)+\dim (H)-\dim (G)$;
les composantes atypiques sont donc celles dont la dimension dépasse
la dimension attendue.

Dans la suite, cette notion n'interviendra que dans le cas particulier
où $H$ est un sous-groupe algébrique de~$G$,
ou un translaté d'un tel sous-groupe.

\begin{conj}\label{conj.zilber}
Soit $G$ une variété semi-abélienne complexe.
Soit $X$ une sous-variété fermée de~$G$, irréductible.
Il existe une famille \emph{finie} $\Phi$ de sous-groupes
algébriques stricts de~$G$ telle que pour tout sous-groupe algébrique~$H$
de~$G$, toute composante atypique de~$X\cap H$ soit contenue
dans l'un des éléments de~$\Phi$.
\end{conj}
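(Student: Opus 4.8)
Pour attaquer la conjecture~\ref{conj.zilber}, le plan naturel (qui, tel quel, en fournit déjà de nombreux cas particuliers) serait le suivant. On chercherait d'abord à se ramener au cas où $G$ et $X$ sont définis sur le corps~$\bQ$ des nombres algébriques, par un argument de spécialisation analogue à celui de~\cite{bombieri-masser-zannier:2008b} ; c'est déjà une étape délicate en dimension~$>1$, car il faut contrôler uniformément des familles de sous-groupes algébriques. On raisonnerait ensuite par récurrence sur $d=\dim (X)$, le cas $d=0$ étant immédiat : si $X=\{x\}$, on prend pour~$\Phi$ le plus petit sous-groupe algébrique de~$G$ contenant~$x$ lorsqu'il est strict, et $\Phi=\varnothing$ sinon.

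Supposons donc $d\geq1$ et la conjecture acquise pour les sous-variétés de dimension~$<d$. Le théorème de structure des sous-variétés anormales de \textsc{Bombieri}, \textsc{Masser} et \textsc{Zannier} (\cf\cite{bombieri-masser-zannier:2007}) affirme que le complémentaire~$X^{\oa}$ dans~$X$ de la réunion des sous-variétés anormales de dimension~$\geq1$ est un ouvert de Zariski, et que $X\setminus X^{\oa}$ est une réunion \emph{finie} de sous-variétés~$Z_i$, chacune contenue dans un translaté $g_i+H_i$ d'un sous-groupe algébrique, avec les~$H_i$ en nombre fini. Pour les indices~$i$ tels que $\dim (Z_i)<d$, l'hypothèse de récurrence appliquée à~$Z_i$ fournit une famille finie~$\Phi_i$. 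Une composante~$Z_i$ peut coïncider avec~$X$, mais seulement lorsque $X$ est elle-même contenue dans un translaté d'un sous-groupe algébrique strict ; on traite ce cas par les manipulations de \textsc{Rémond} et \textsc{Pink} rappelées plus haut (passage à une puissance de~$G$), qui le ramènent à une variante de la conjecture, avec un sous-groupe~$\Gamma$ de rang fini, pour une sous-variété d'une variété semi-abélienne de dimension strictement plus petite.

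Il reste alors à maîtriser les composantes atypiques~$Y$ des intersections $X\cap H$, où $H$ parcourt les sous-groupes algébriques de~$G$, qui rencontrent l'ouvert~$X^{\oa}$. Un calcul de dimensions donne $\codim_G(H)>\codim_X(Y)$ ; si $\dim (Y)\geq1$, alors $Y$ est une sous-variété anormale de~$X$, donc contenue dans $X\setminus X^{\oa}$, ce qui est exclu. Ainsi $\dim (Y)=0$, c'est-à-dire $Y=\{x\}$ avec $x\in X^{\oa}$ et $\codim_G(H)\geq d+1$, autrement dit $x\in X^{\oa}(\bQ)\cap G^{[d+1]}$. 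Si $X$ est géométriquement dégénérée, cet ensemble est vide : les fibres de~$X$ au-dessus d'un quotient convenable de~$G$ sont alors des sous-variétés anormales de dimension~$\geq1$ recouvrant~$X$, de sorte que $X^{\oa}=\varnothing$. Sinon --- et c'est ici qu'interviennent les hypothèses sur~$G$ --- le théorème~\ref{theo.nd} s'applique et montre que $X^{\oa}(\bQ)\cap G^{[d+1]}$ n'est pas dense dans~$X$, son ressort étant un théorème de majoration de la hauteur des points de~$X^{\oa}(\bQ)$ appartenant à un sous-groupe de codimension~$\geq d$ (\textsc{Habegger}, \cite{habegger:2009a,habegger:2009d}), combiné à la conjecture de Bogomolov ou au problème de Lehmer (voir le~\S3). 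L'adhérence de Zariski de $X^{\oa}(\bQ)\cap G^{[d+1]}$ est donc une réunion finie de sous-variétés~$X_j$ de dimension~$<d$ ; comme chaque point~$x$ précédent est une composante atypique de l'une des intersections $X_j\cap H$ --- ou du moins est contenu dans une telle composante atypique de dimension~$\geq1$ --- l'hypothèse de récurrence appliquée à chaque~$X_j$ fournit des familles finies~$\Phi_j$. La réunion $\Phi=\bigl(\bigcup_i\Phi_i\bigr)\cup\bigl(\bigcup_j\Phi_j\bigr)$ répond alors à la question.

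L'obstacle principal tient aux limitations actuelles du théorème de majoration des hauteurs : il n'est établi que pour les tores et les variétés abéliennes à multiplications complexes (ainsi que certains produits, \cf\cite{viada:2010}), le cas des variétés abéliennes sans multiplication complexe, et \emph{a fortiori} celui des variétés semi-abéliennes non scindées, demeurant hors d'atteinte. S'y ajoutent la difficulté du passage du corps~$\C$ au corps~$\bQ$ lorsque $d>1$ et le fait que le théorème de structure, comme la réduction à une variante avec un groupe~$\Gamma$, exigent --- pour que la récurrence ci-dessus se déroule sans accroc --- un contrôle uniforme de familles de sous-groupes algébriques et de points de hauteur bornée dont on ne dispose aujourd'hui que partiellement ; c'est pourquoi la conjecture~\ref{conj.zilber} reste ouverte en toute généralité, y compris dans le cas des tores.
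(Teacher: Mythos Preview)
The statement you are addressing is a \emph{conjecture}: the paper states it as such and does not attempt to prove it. There is therefore no proof in the paper to compare against. Your proposal is, accordingly, not a proof but a strategic outline, and you correctly conclude that the conjecture remains open even for tori.

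That said, your strategy contains a genuine gap worth making explicit, because it is precisely the obstruction that keeps the conjecture open. The induction you propose breaks down at the degenerate step: when one of the components~$Z_i$ coincides with~$X$ --- so that $X$ lies in a translate $g+H$ of a proper semi-abelian subvariety --- you invoke the Rémond--Pink trick of passing to a power of~$G$ to reduce to ``a variant with a subgroup~$\Gamma$ of finite rank''. But this variant is \emph{not} the conjecture you are inducting on: it is (a strengthening of) conjecture~\ref{conj.pink-ml}, itself open, and it does not feed back into your inductive hypothesis for conjecture~\ref{conj.zilber}. Concretely, translating~$X$ by~$-g$ and working inside~$H$ turns the intersections $X\cap K$ (with $K$ a subgroup of~$G$) into intersections of $X-g$ with \emph{translates} of subgroups of~$H$; controlling atypical intersections with arbitrary translates is a different and harder problem. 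Since the components~$Z_i$ of $X\setminus X^{\oa}$ are themselves typically degenerate, your recursion hits this obstacle immediately. This is exactly the phenomenon that separates the original Bombieri--Masser--Zannier theorem from Maurin's theorem~\ref{theo.maurin'}, and which the paper treats at length in~\S4.3.

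Two smaller points. First, your claim that a positive-dimensional atypical component~$Y$ of $X\cap H$ (with $H$ a subgroup) is automatically anomalous in the sense defining~$X^{\oa}$ requires care: the definition of~$X^{\oa}$ in the paper restricts to semi-abelian subvarieties of codimension~$\leq\dim(X)$, so when $\codim(H)>\dim(X)$ one must interpolate a larger semi-abelian subvariety of the right codimension, which need not exist in a general semi-abelian~$G$. Second, where you invoke theorem~\ref{theo.nd} to conclude non-density of $X^{\oa}(\bQ)\cap G^{[d+1]}$, what your induction actually needs --- and what the paper provides for tori via proposition~\ref{prop.fini} and its corollary --- is the \emph{finiteness} of that set, not merely its non-density; non-density alone would leave you with infinitely many zero-dimensional atypical components, each requiring its own subgroup.
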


Avec ces notations,
notons $X^\ta$ le complémentaire dans~$X$
de la réunion des composantes atypiques 
de dimension strictement positive d'une intersection~$X\cap H$,
où $H$ est un sous-groupe algébrique de~$G$.\footnote
{Les lettres ta signifient \emph{torsion anomalous}.}
Une variante de la conjecture~\ref{conj.zilber} due
à \textsc{Bombieri}, \textsc{Masser} et \textsc{Zannier}
(\cite{bombieri-masser-zannier:2007}, \emph{Torsion Openness Conjecture},
p.~25) affirme que $X^\ta$ est ouvert dans~$X$.

\textsc{Zilber} observe aussi dans son article (\cite{zilber:2002},
propositions~2 et~3)
comment la conjecture~\ref{conj.zilber}
entraîne celles de Manin--Mumford ou de Mordell--Lang.

C'est d'ailleurs très simple dans le cas Manin--Mumford. 
Considérons en effet une sous-variété (fermée, irréductible)~$X$
de~$G$, distincte de~$G$ et qui n'est pas contenue dans un sous-groupe
algébrique strict de~$G$.
Soit alors~$x$ un point de~$X$
qui est de torsion dans~$G$ et soit $H_x$ le sous-groupe algébrique
qu'il engendre. Alors, $Y=\{x\}$ est une composante irréductible
de l'intersection $X\cap H_x$ ; puisque $X\subsetneq G$,
la dimension de~$Y$ vérifie
\[ \dim (Y)=0 >\dim (X)-\dim (G) =\dim (X)+\dim ( H_x)-\dim (G),\] 
si bien que $Y$ est une composante atypique de cette intersection.
Supposant la conjecture~\ref{conj.zilber} satisfaite, 
il existe $H\in\Phi$ tel que $x\in H$.
Autrement dit, les points de~$X$ qui sont de torsion dans~$G$ sont contenus
dans la réunion finie des sous-variétés $X\cap H$, pour $H\in\Phi$.
À moins que $X\cap H=X$ pour l'un de ces sous-groupes, 
ce qui entraîne que $X$ est
contenue dans un sous-groupe algébrique strict,
les points de~$X$ qui sont de torsion dans~$G$
ne sont donc pas denses dans~$X$ pour la topologie de Zariski.

\medskip

Le théorème suivant est dû à \textsc{Zilber}~\cite{zilber:2002}
lorsque $G$ est un tore et à J.~\textsc{Kirby}~\cite{kirby:2009}
en général. La différence avec la conjecture~\ref{conj.zilber}
vient du fait qu'on met en jeu tous les translatés
de sous-variétés semi-abéliennes et pas seulement
les sous-groupes algébriques (qui sont réunion finie
de translatés de sous-variétés semi-abéliennes
par des \emph{points de torsion}).

\begin{theo}\label{theo.kirby}
Soit $G$ une variété semi-abélienne complexe
et soit $X$ une sous-variété (fermée, irréductible) de~$G$.
Il existe une famille \emph{finie}~$\Phi$ de sous-variétés
semi-abéliennes strictes de~$G$ telle que:
pour tout translaté $gK$ d'une sous-variété semi-abélienne~$K$ de~$G$
et toute composante atypique~$Y$ de l'intersection $X\cap gK$,
il existe $H\in\Phi$ et $h\in G$ tels que $Y\subset h H$
et
\[ \dim (H)+\dim(Y)=\dim(K)+\dim(X\cap hH). \]
\end{theo}

Cette dernière condition de dimension
entraîne que la codimension de~$H$ est au moins
égale à l'excès~$t$ de dimension
de la composante atypique~$Y$, excès que l'on  définit par
\[ t=\dim (Y)-(\dim (X)+\dim (K)-\dim (G)). \]
Au moins lorsque $H$ contient~$K$,
elle signifie que $Y$
est une composante \emph{typique} de l'intersection
de~$X\cap hH$ avec~$gK$ dans le translaté~$gH$.

Le théorème~\ref{theo.kirby}
et son précurseur sur les tores (\cite{zilber:2002}, corollaire~3;
voir aussi~\cite{poizat:2001})
reposent sur un théorème de J.~\textsc{Ax} établissant une
variante de la conjecture de Schanuel dans le cas différentiel
(\cite{ax:1972}, théorème~3).
Par des arguments de théorie des modèles 
des corps différentiels (le théorème de compacité
en logique du premier ordre), \textsc{Zilber}  et \textsc{Kirby}
en déduisent une version uniforme à paramètres,
puis l'énoncé ci-dessus
avec la condition $\codim(H)\geq t$.
Lorsque $Y$ est une composante atypique
de l'intersection $(X\cap hH)\cap gK$, on applique de nouveau l'argument.

Signalons que \textsc{Bombieri}, \textsc{Masser} et \textsc{Zannier}
dans le cas des tores
(\cite{bombieri-masser-zannier:2007}, théorème~1.4),
puis \textsc{Rémond} dans le cas des variétés abéliennes
(\cite{remond:2009}, partie~3),
démontrent des résultats de même nature, mais \emph{effectifs},
au sens où ils contrôlent le degré d'une partie maximale
qui est une composante atypique d'une intersection de~$X$
avec un translaté de sous-groupe algébrique.

\begin{coro}\label{coro.oa}
Soit $X^\oa$ le complémentaire dans~$X$ des composantes
atypiques de dimension strictement positive
d'une intersection $X\cap gH$, où $H$ parcourt l'ensemble
des sous-variétés semi-abéliennes non nulles
de~$G$ de codimension~$\leq\dim (X)$ et $g$ parcourt~$G$.
Alors $X^\oa$ est ouvert dans~$X$ pour la topologie de Zariski.\footnote
{Les lettres oa sont l'abréviation de \emph{open anomalous}.}
\end{coro}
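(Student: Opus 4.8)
The plan is to deduce the statement from Theorem~\ref{theo.kirby} combined with the upper semicontinuity of fibre dimension. Put $d=\dim(X)$; it is equivalent to show that $X\setminus X^{\oa}$ is Zariski closed in~$X$. By definition this set is the union, over all nonzero semiabelian subvarieties $H$ of $G$ with $\codim(H)\leq d$ and all $g\in G$, of the atypical components of $X\cap gH$; and for such an~$H$ any atypical component $Y$ of $X\cap gH$ has $\dim(Y)>\dim(X)+\dim(H)-\dim(G)=d-\codim(H)\geq 0$, hence is of dimension $\geq d-\codim(H)+1\geq 1$ and in particular positive-dimensional.

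First I would handle a single $H$. Let $\pi_H\colon G\to G/H$ be the quotient morphism, where $G/H$ is again semiabelian, and set $f_H=\pi_H|_X\colon X\to G/H$. The fibre of $f_H$ through a point $x\in X$ is exactly $X\cap g_xH$, where $g_xH$ is the unique coset of $H$ containing~$x$, so
\[ Z_H=\{\,x\in X \;:\; \dim_x f_H^{-1}(f_H(x))\geq d-\codim(H)+1\,\} \]
is precisely the union over $g\in G$ of the atypical components of the intersections $X\cap gH$. Since $x\mapsto\dim_x f_H^{-1}(f_H(x))$ is upper semicontinuous, $Z_H$ is Zariski closed in~$X$. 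Consequently $X\setminus X^{\oa}=\bigcup_H Z_H$, the union being over all nonzero semiabelian subvarieties $H$ of~$G$ of codimension $\leq d$; the whole issue is to reduce this infinite union to a finite one.

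Here Theorem~\ref{theo.kirby} is used. Let $\Phi$ be the \emph{finite} family it provides, and keep those $H_0\in\Phi$ which are nonzero and of codimension $\leq d$. I claim that $X\setminus X^{\oa}$ equals the union of the corresponding $Z_{H_0}$, a finite union of closed sets, which settles the corollary. One inclusion is clear from the first step. Conversely, let $x\in X\setminus X^{\oa}$, so that $x$ lies on an atypical component $Y$ of some $X\cap gH$ with $H$ nonzero semiabelian of codimension $\leq d$. By Theorem~\ref{theo.kirby} there exist $H_0\in\Phi$ and $h\in G$ with $Y\subset hH_0$ and $\dim(H_0)+\dim(Y)=\dim(H)+\dim(X\cap hH_0)$, where the last term is read as the local dimension of $X\cap hH_0$ along~$Y$, that is, as $\dim(Y_0)$ for the component $Y_0$ of $X\cap hH_0$ containing~$Y$. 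From $Y\subset Y_0$ we get $\dim(H_0)=\dim(H)+\dim(Y_0)-\dim(Y)\geq\dim(H)$, hence $\codim(H_0)\leq\codim(H)\leq d$ and $H_0\neq 0$ (otherwise $Y$ would be a single point, contradicting $\dim(Y)\geq 1$). Furthermore $\dim(Y_0)=\dim(Y)+\dim(H_0)-\dim(H)$, so adding $\dim(H_0)-\dim(H)$ to both sides of the inequality $\dim(Y)>d+\dim(H)-\dim(G)$ expressing that $Y$ is atypical in $X\cap gH$ yields $\dim(Y_0)>d+\dim(H_0)-\dim(G)$, i.e.\ $Y_0$ is an atypical component of $X\cap hH_0$. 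By the first step $Y_0\subset Z_{H_0}$, and therefore $x\in Y\subset Y_0\subset Z_{H_0}$.

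The routine ingredients are the semicontinuity of fibre dimension and the fact that a quotient of a semiabelian variety by a semiabelian subvariety is semiabelian. The delicate point — and the only real obstacle — is the accounting with the dimension identity of Theorem~\ref{theo.kirby}: one must interpret $\dim(X\cap hH_0)$ as the local dimension along~$Y$, and then verify, as above, that this simultaneously forces $\codim(H_0)\leq\codim(H)\leq d$ (so that $H_0$ is admissible for the formation of $X^{\oa}$) and the atypicality of $Y_0$ in $X\cap hH_0$. Granting this, the finiteness of~$\Phi$ does the rest.
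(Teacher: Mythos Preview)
Your proof is correct and rests on the same two pillars as the paper's: for each fixed~$H$ the set~$Z_H$ is closed by upper semicontinuity of fibre dimension, and Theorem~\ref{theo.kirby} reduces the a priori infinite union $\bigcup_H Z_H$ to a finite one indexed by~$\Phi$. The organisation differs, however. The paper first restricts to a \emph{maximal} atypical component~$Y$ and the \emph{minimal} semiabelian subvariety~$K$ whose coset contains~$Y$, then uses a dichotomy ($Y'=Y$ versus $Y\subsetneq Y'$, where $Y'$ is the component of $X\cap yH$ through~$Y$) to conclude that~$K$ itself lies in~$\Phi$. You proceed more directly: for an arbitrary atypical~$Y$ you use the dimension identity of Theorem~\ref{theo.kirby} to check that the component~$Y_0$ of $X\cap hH_0$ through~$Y$ (with $H_0\in\Phi$) is again atypical and that $H_0$ is admissible, so $Y\subset Y_0\subset Z_{H_0}$. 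Your route sidesteps the maximal/minimal reduction and the case analysis; the price is that you must make explicit the local reading of $\dim(X\cap hH_0)$ as $\dim(Y_0)$ in Theorem~\ref{theo.kirby}---a point the paper's argument also leans on but leaves implicit.
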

\begin{proof}
On commence par remarquer que si $H$ est une sous-variété semi-abélienne
fixée, la réunion~$\mathscr Z_H$ des composantes atypiques de dimension
strictement positive d'une intersection $X\cap hH$ 
est une partie fermée de~$X$. Considérons en effet
le morphisme $\phi\colon G\ra G/H$ et sa restriction
$\phi_X\colon X\ra\phi(X)$ à~$X$ ; on 
a $X\cap xH=\phi_X^{-1}(\phi_X(x))$ pour tout $x\in X$.
Les composantes en question
sont celles des fibres de~$\phi_X$ qui sont de dimension~$>\max(0,\dim(X)-\dim(G/H))$.
D'après le théorème de semi-continuité
de la dimension des fibres d'un morphisme, c'est une partie fermée de~$X$.

Nous allons démontrer que $X\setminus X^\oa$ est la réunion
des parties~$\mathscr Z_H$, où $H$ décrit l'ensemble fini~$\Phi$
du théorème~\ref{theo.kirby}.

Considérons donc une sous-variété~$Y$ de~$X$ 
de dimension strictement positive, composante irréductible
atypique d'une intersection $X\cap yK$, où $y\in Y$
et $K$ est une sous-variété semi-abélienne de~$G$.
On peut supposer,
quitte à remplacer~$K$ par cette sous-variété,
que $K$ est la plus petite sous-variété semi-abélienne
de~$G$ telle que $Y\subset yK$. Sous l'hypothèse
supplémentaire que $Y$ est maximale parmi les composantes
atypiques, nous allons démontrer que $K\in\Phi$;
nous aurons donc $Y\subset\mathscr Z_K$,
puis $Y\subset \bigcup_{H\in\Phi} \mathscr Z_H$
et le corollaire en résultera d'après la première partie
de la démonstration.

Soit $H\in\Phi$ la sous-variété semi-abélienne de~$G$ fournie par
le théorème~\ref{theo.kirby}: on a $Y\subset yH$
et \[ \dim(H)+\dim(Y)=\dim(K)+\dim(X\cap yH). \]
Par définition de~$K$, on a $K\subset H$.
Soit $Y'$ une composante irréductible de l'intersection~$X\cap yH$
qui contient~$Y$.  L'inégalité 
$\dim(Y)\geq\dim(Y')-\codim_H(K)$  donnée par la théorie
de l'intersection entraîne que $\dim(Y')=\dim(X\cap yH)$.
Comme $Y$ est une sous-variété irréductible maximale
dans l'ensemble des composantes atypiques d'intersections
de~$X$ avec un translaté d'une sous-variété semi-abélienne,
on a l'alternative suivante:
\begin{itemize}
\item Si $Y'=Y$, l'égalité de dimensions ci-dessus entraîne $H=K$;
\item Sinon, $Y\subsetneq Y'$, donc $Y$ est une composante
typique de l'intersection $Y'\cap yK$, c'est-à-dire
\[ \dim (Y') = \dim(X)+\dim(H)-\dim(G)
= \dim(Y)+\dim(H)-\dim(K) ;\]
on en déduit que
\[ \dim(Y)=\dim(X)+\dim(K)-\dim(G), \]
ce qui contredit l'hypothèse que $Y$ est une composante atypique
de l'intersection $X\cap yK$.
\end{itemize}
Cela conclut la preuve du corollaire.
\end{proof}

Notons que $X^\oa$ peut fort bien être vide; c'est
le cas si 
$X$ est contenu dans un translaté de sous-variété semi-abélienne,
plus généralement si $X$ est géométriquement dégénérée
(cf. \cite{remond:2009}, Prop. 4.2):
\begin{prop}
L'ouvert $X^\oa$ est vide si et seulement si $X$ est géométriquement
dégénérée.

Plus généralement, $X\setminus X^\oa$ est la réunion des
sous-variétés~$Y$ de~$X$ pour lesquelles il existe une sous-variété
semi-abélienne~$H$ telle que 
\[ \dim(YH/H)<\min(\dim(Y),\dim(G/H)-\codim_X(Y)). \]
\end{prop}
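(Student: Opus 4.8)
The plan is to establish the general ("plus généralement") statement as an equality of subsets of~$X$, and then read off the dichotomy for~$X^\oa$ by specialising to $Y=X$. Throughout I write $\pi_H\colon G\to G/H$ for the quotient by a semi-abelian subvariety~$H$, so that $\dim(YH/H)=\dim\pi_H(Y)$, and I use that $X\cap yH=(\pi_H|_X)^{-1}(\pi_H(y))$ for $y\in X$.

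First I would check the inclusion of $X\setminus X^\oa$ in the right-hand union. By construction $X\setminus X^\oa$ is the union of the positive-dimensional atypical components~$Y$ of the intersections $X\cap gH$; such a~$Y$ lies in the coset~$gH$, so $\pi_H(Y)$ is a point, $\dim(YH/H)=0$, and the two requirements ``$\dim Y>0$'' and ``$\dim Y>\dim X+\dim H-\dim G$'' (the latter being $\codim_X(Y)<\dim(G/H)$) say exactly that $0<\min\bigl(\dim Y,\ \dim(G/H)-\codim_X(Y)\bigr)$; hence $Y$ belongs to the family on the right, with the same~$H$. For the reverse inclusion I would fix $Y\subseteq X$ and a semi-abelian~$H$ with $e:=\dim\pi_H(Y)<\min\bigl(\dim Y,\dim(G/H)-\codim_X(Y)\bigr)$ and argue pointwise: since $\pi_H|_Y\colon Y\to\pi_H(Y)$ is dominant with irreducible source, every component of every one of its fibres has dimension $\geq\dim Y-e$, so for each $y\in Y$ the component~$W$ of $Y\cap yH$ through~$y$ has $\dim W\geq\dim Y-e>0$; choosing a component~$W'$ of $X\cap yH$ containing~$W$, the second half of the hypothesis gives
\[ \dim W'\geq\dim W\geq\dim Y-e>\dim X+\dim H-\dim G, \]
so $W'$ is a positive-dimensional atypical component of $X\cap yH$ and therefore $y\in W'\subseteq X\setminus X^\oa$. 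Since $y$ was arbitrary, $Y\subseteq X\setminus X^\oa$, which finishes the set equality.

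Next I would deduce the first assertion. With $Y=X$ one has $\codim_X(Y)=0$, and the membership condition becomes $\dim\pi_H(X)<\min(\dim X,\dim(G/H))$, which is precisely the statement that $X$ is geometrically degenerate via~$H$; moreover one may always take the witnessing subgroup semi-abelian: replacing~$G'$ by its identity component $(G')^0$ (a semi-abelian subvariety) changes neither $\dim(G/G')$ nor the dimension of the image of~$X$ therein, as the natural map $G/(G')^0\to G/G'$ has finite fibres. Thus, if $X$ is geometrically degenerate the condition holds for some~$H$, and the set equality gives $X\subseteq X\setminus X^\oa$, i.e. $X^\oa=\emptyset$. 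Conversely, if $X^\oa=\emptyset$ then $X=X\setminus X^\oa$, which by Corollary~\ref{coro.oa} and its proof is a \emph{finite} union of closed subvarieties $\mathscr Z_H$; by irreducibility $X=\mathscr Z_H$ for one~$H$, so the generic point of~$X$ lies in the closed jumping locus $\mathscr Z_H$ of $\pi_H|_X$, which forces $\dim\pi_H(X)<\min(\dim X,\dim(G/H))$, i.e. $X$ is geometrically degenerate.

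The step I expect to be most delicate is the reverse inclusion in the general statement: producing, at \emph{every} point~$y$ of~$Y$, a genuinely atypical positive-dimensional component of $X\cap yH$ through~$y$ out of a fibre of $Y\to YH/H$, and arranging the bookkeeping so that the two inequalities ``$e<\dim Y$'' and ``$e<\dim(G/H)-\codim_X(Y)$'' turn into exactly ``positive-dimensional'' and ``atypical'' for that component. The only non-elementary ingredient is Corollary~\ref{coro.oa}, invoked only to know that $X\setminus X^\oa$ is closed and a finite union --- this is what makes the converse half of the first assertion work; everything else is dimension counting.
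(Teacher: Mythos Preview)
Your argument is correct and matches the paper's proof very closely. The only notable difference is in the inclusion $X\setminus X^\oa\subset$ (right-hand union): the paper proves this by taking $Y$ to be an irreducible component of one of the closed sets~$\mathscr Z_H$ from the proof of Corollary~\ref{coro.oa} and showing that every fibre of $Y\to YH/H$ has dimension $>\max(0,\dim X+\dim H-\dim G)$, whence the inequality; you instead take $Y$ to be a single positive-dimensional atypical component of some $X\cap gH$, for which $\dim(YH/H)=0$ and the inequality is immediate. Your route is slightly more direct and avoids invoking~$\mathscr Z_H$ at this stage, at the cost of deferring the use of the Corollary (and its finite decomposition $X\setminus X^\oa=\bigcup_{H\in\Phi}\mathscr Z_H$) to the converse of the first assertion, exactly as you note. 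The other inclusion and the deduction of the first assertion are handled identically in substance.
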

\begin{proof}
Soit $Y$ une sous-variété de~$X$ et $H$ une sous-variété semi-abélienne
de~$G$ telle que $\dim(YH/H)<\min(\dim(Y),\dim(G/H)-\codim_X(Y))$.
Notons $\phi$ la projection de~$G$ sur~$G/H$ 
et $\phi_Y\colon Y\ra \phi(Y)=YH/H$ le morphisme qui s'en
déduit par restriction à~$Y$.
Pour tout $y\in Y$, on a
\begin{align*}
\dim_y(X\cap yH) & \geq \dim_y(Y\cap yH) = \dim_y \phi_Y^{-1}(\phi_Y(y)) \\
& \geq \dim(Y)-\dim(\phi_Y(Y)) \\
& > \dim(Y)-\min(\dim(Y),\dim(G/H)-\dim(X)+\dim(Y)) \\
& =\max(0,\dim(X)-\codim_G(H)).
\end{align*}
Cela prouve qu'au moins une composante irréductible 
de l'intersection $X\cap yH$ 
passant par~$y$
est atypique. Par suite, $y\not\in X^\oa$.

Inversement, il suffit de démontrer que toute composante
irréductible~$Y$ d'un ensemble~$\mathscr Z_H$ introduit
dans la preuve du corollaire~\ref{coro.oa}
vérifie cette inégalité. Or, par la définition même de~$\mathscr Z_H$,
toute fibre du morphisme~$\phi_Y\colon Y\ra YH/H\subset G/H$
est de dimension~$>\max(0,\dim(X)+\dim(H)-\dim(G))$.
Par conséquent,
\begin{align*}
 \dim(YH/H) & < \dim(Y) - \max(0,\dim(X)+\dim(H)-\dim(G)) \\
&= \min(\dim(Y),\dim(G/H)-\codim_X(Y)), \end{align*}
comme annoncé.
\end{proof}

Plus généralement, si $t$ est un entier naturel,
on peut définir l'ensemble~$X^{\oa,[t]}$
comme le complémentaire, dans~$X$, de l'ensemble des points~$x\in X$
tels qu'il existe une sous-variété semi-abélienne~$H$ de~$G$ telle que
\[ \dim_x (X\cap xH) \geq \max(1,t-\codim_G(H)). \]
Lorsque $t=1+\dim (X)$, on a $X^{\oa,[t]}=X^\oa$. 
Inversement, lorsque $t\leq \dim (X)$
et \mbox{$\dim (X)>0$,} on a $\dim_x(X\cap xG) =\dim(X)>\max(0,t)$ pour tout
$x\in X$, ce qui entraîne que $X^{\oa,[t]}=\emptyset$.

La proposition suivante généralise le corollaire~\ref{coro.oa}.
\begin{prop}
Pour tout entier naturel~$t$, $X^{\oa, [t]}$ est ouvert dans~$X$.
\end{prop}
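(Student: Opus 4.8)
The plan is to run, in a quantified form, the argument that proves Corollary~\ref{coro.oa}. If $\dim(X)=0$ every subset of $X$ is open, and if $t\leq\dim(X)$ with $\dim(X)>0$ one has $X^{\oa,[t]}=\emptyset$, as already noted; so I may assume $\dim(X)\geq1$ and $t\geq\dim(X)+1$. Set $B=X\setminus X^{\oa,[t]}$ and, for a semi-abelian subvariety $H$ of $G$, let $B_H=\{x\in X : \dim_x(X\cap xH)\geq\max(1,t-\codim_G(H))\}$, so that $B=\bigcup_H B_H$ by the definition of $X^{\oa,[t]}$. Exactly as in the proof of Corollary~\ref{coro.oa}, each $B_H$ is closed in $X$: with $\phi_X\colon X\ra XH/H$ the morphism induced by $G\ra G/H$ one has $X\cap xH=\phi_X^{-1}(\phi_X(x))$, and $B_H$ is the set of points at which the fibre of $\phi_X$ has dimension at least a fixed integer, a closed set by semicontinuity of the dimension of fibres. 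It is therefore enough to exhibit a \emph{finite} family $\Psi$ of semi-abelian subvarieties of $G$ with $B=\bigcup_{H\in\Psi}B_H$.

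Call a closed irreducible subvariety $Y$ of $X$, of positive dimension, \emph{$t$-anomalous} if, letting $H_Y$ be the smallest semi-abelian subvariety of $G$ such that $Y$ is contained in a coset of $H_Y$, the variety $Y$ is an irreducible component of $X\cap yH_Y$ (for $y\in Y$) and $\dim(Y)+\codim_G(H_Y)\geq t$. First I would check that $B$ is exactly the union of the $t$-anomalous subvarieties of $X$. If $Y$ is $t$-anomalous, then for every $x\in Y$ one has $\dim_x(X\cap xH_Y)\geq\dim(Y)\geq\max(1,t-\codim_G(H_Y))$, so $Y\subset B_{H_Y}\subset B$. Conversely, given $x\in B_H$, pick an irreducible component $Y_0$ of $X\cap xH$ through $x$ with $\dim(Y_0)=\dim_x(X\cap xH)$; its smallest semi-abelian subvariety $H_{Y_0}$ is contained in $H$, whence $Y_0$ is a component of $X\cap xH_{Y_0}$, and $\dim(Y_0)+\codim_G(H_{Y_0})\geq\dim(Y_0)+\codim_G(H)\geq t$ (using $\dim(Y_0)\geq\max(1,t-\codim_G(H))$), so $Y_0$ is a $t$-anomalous subvariety through $x$. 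Consequently $B$ is the union of the \emph{maximal} $t$-anomalous subvarieties of $X$, and it suffices to prove that $\{H_Y : Y\text{ maximal }t\text{-anomalous}\}$ is finite; one then takes $\Psi$ to be this set.

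To bound this set I would invoke Theorem~\ref{theo.kirby}, with $\Phi$ its finite family attached to $X\subset G$, and show that $H_Y\in\Phi$ for every maximal $t$-anomalous $Y$. Fix such a $Y$ and set $K=H_Y$. Since $\dim(Y)+\codim_G(K)\geq t>\dim(X)$, $Y$ is an atypical component of $X\cap yK$, of excess $\dim(Y)+\codim_G(K)-\dim(X)\geq t-\dim(X)\geq1$; Theorem~\ref{theo.kirby} furnishes $H\in\Phi$ and $h\in G$ with $Y\subset hH$ and $\dim(H)+\dim(Y)=\dim(K)+\dim(X\cap hH)$, and minimality of $K$ gives $K\subset H$. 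As in the proof of Corollary~\ref{coro.oa}, if $Y'$ denotes the irreducible component of $X\cap hH$ containing $Y$, then the inequality $\dim(Y)\geq\dim(Y')-\codim_H(K)$ from intersection theory together with the displayed equation forces $\dim(Y')=\dim(X\cap hH)$. If $Y'=Y$, the equation reduces to $\dim(H)=\dim(K)$, hence $H=K=H_Y\in\Phi$. If $Y\subsetneq Y'$, then $Y'$ is itself an irreducible component of an intersection $X\cap y'H_{Y'}$ with $H_{Y'}\subset H$, and $Y\subsetneq Y'$, so by maximality of $Y$ it is not $t$-anomalous: $\dim(Y')+\codim_G(H_{Y'})<t$, hence a fortiori $\dim(Y')+\codim_G(H)<t$; but the displayed equation (now with $\dim(X\cap hH)=\dim(Y')$) gives $\dim(Y)+\codim_G(K)=\dim(Y')+\codim_G(H)<t$, contradicting the $t$-anomaly of $Y$. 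So only the first case can occur, $H_Y\in\Phi$, and $B=\bigcup_{H\in\Phi}B_H$ is closed; thus $X^{\oa,[t]}$ is open in $X$.

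The point requiring care --- the only one, and it is already present in the proof of Corollary~\ref{coro.oa} --- is the equality $\dim(Y')=\dim(X\cap hH)$, i.e.\ that the component of the fibre $X\cap hH$ meeting $Y$ has maximal dimension in that fibre. Should this equidimensionality fail a priori, I would reinstate it by applying Theorem~\ref{theo.kirby} once more inside the translate $hH$, to $Y$ now seen as an atypical component of the intersection of the relevant component of $X\cap hH$ with a coset of $K$: the ambient semi-abelian variety strictly drops in dimension at each such step, so the process stops after finitely many steps, and, invoking the version of Theorem~\ref{theo.kirby} uniform over the finitely many algebraic families in which these components vary, one merely enlarges $\Phi$ to a still finite family. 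With that caveat, the proof goes through.
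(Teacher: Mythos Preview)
The paper does not actually supply a proof of this proposition: it is stated immediately after the definition of $X^{\oa,[t]}$ with the sole comment that it ``généralise le corollaire~\ref{coro.oa}'', and the text then moves on to the next section. Your argument is precisely the natural adaptation of the proof of Corollary~\ref{coro.oa} that this remark invites --- replacing ``atypical'' by ``$t$-anomalous'', showing that the complement $B$ is covered by the closed sets $B_H$, and using Theorem~\ref{theo.kirby} to reduce to finitely many $H$ --- so your approach is exactly what the paper has in mind.

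Your identification of the delicate point is accurate and worth recording: the step ``$\dim(Y')=\dim(X\cap hH)$'' is already used in the paper's proof of Corollary~\ref{coro.oa}, where it is asserted to follow from the intersection-theoretic inequality $\dim(Y)\geq\dim(Y')-\codim_H(K)$ combined with the Kirby relation $\dim(Y)=\dim(X\cap hH)-\codim_H(K)$, but that combination only yields the trivial $\dim(Y')\leq\dim(X\cap hH)$. Your proposed fix --- iterate Theorem~\ref{theo.kirby} inside the coset $hH$, noting that the ambient dimension strictly drops --- is the standard way to close this gap; alternatively, one may quote the version of the structure theorem (as in \textsc{Rémond}~\cite{remond:2009} or \textsc{Bombieri--Masser--Zannier}~\cite{bombieri-masser-zannier:2007}) in which the conclusion is phrased in terms of the component of $X\cap hH$ through~$Y$ rather than the full intersection. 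Either route completes the argument, and the rest of your proof goes through without change.
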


\section{Hauteurs}

Le reste de ce rapport est consacré à résumer
comment procèdent les preuves des théorèmes~\ref{theo.maurin'}
ou~\ref{theo.nd}.

Tout d'abord, il s'agit de démonstrations où l'arithmétique
joue un rôle fondamental, par la considération 
des hauteurs, et surtout par l'utilisation de  minorations
extrêmement fines des hauteurs dans le contexte
du problème de Lehmer et de ses variantes,
ou du problème de Bogomolov.
De sorte, même si j'avais donné jusqu'à présent
des énoncés sur le corps des nombres complexes, 
tous les arguments qui suivent supposent que les variétés considérées
sont définies sur le corps~$\bQ$ des nombres algébriques.

Comme je l'ai dit à propos du théorème~\ref{theo.maurin},
\textsc{Bombieri}, \textsc{Masser} et \textsc{Zannier}
ont expliqué dans leur article~\cite{bombieri-masser-zannier:2008b}
comment l'on peut démontrer des résultats sur~$\C$
lorsqu'on dispose de la conjecture~\ref{conj.pink} sur~$\bQ$,
voire des cas particuliers de la conjecture 
obtenus en bornant $\dim (X)$ et $\dim (G)$. Ils se sont limités au cas
des tores et il serait intéressant de développer ces arguments
dans le cas général
des variétés semi-abéliennes et même dans le contexte que considère
\textsc{Pink} dans~\cite{pink:2005}.
(Dans le cadre de la conjecture de Mordell--Lang sur les
variétés semi-abéliennes,
des arguments de spécialisation se trouvent dans~\cite{hindry1988}.)

En outre, la littérature se divise entre 
tores et variétés abéliennes, le cas général des variétés semi-abéliennes
n'étant, à ma connaissance, pas encore traité.

\subsection{La machine des hauteurs}

Rappelons rapidement la notion de hauteur sur l'ensemble
des points algébriques d'une variété projective.

Sur l'espace projectif, la hauteur standard
est la fonction $h\colon\P^n(\bQ)\ra\R_+$
définie par la formule
\[ h(x) = \frac1{[K:\Q]} \sum_v [K_v:\Q_v] \log \max(\abs{x_0}_v,\dots,\abs{x_n}_v ) \]
dans laquelle $K$ est un corps de nombres,
$x$ est un point de~$\P^n(K)$ de coordonnées homogènes $[x_0:\dots:x_n]$ 
dans~$K$, $v$ parcourt l'ensemble des valeurs absolues sur~$K$
qui étendent la valeur absolue réelle ou une valeur absolue $p$-adique
sur~$\Q$, $K_v$ est le complété de~$K$ pour cette valeur absolue
et $\Q_v$ est le corps réel~$\R$ ou le corps $p$-adique~$\Q_p$
suivant les cas.
La formule du produit garantit que le second membre
ne dépend pas du choix des coordonnées homogènes ; il ne
dépend pas non plus du choix d'un corps de nombres~$K$
sur lequel le point~$x$ est défini.
Par exemple, si le point $x\in\P^n(\bQ)$ 
a pour coordonnées homogènes $[x_0:\dots:x_n]$
formée d'entiers relatifs premiers entre eux dans leur ensemble,
on a $h(x)=\log\max(\abs{x_0},\dots,\abs{x_n})$.
Remarquons aussi que la fonction~$h$ est invariante
sous l'action du groupe de Galois~$\Gal(\bQ/\Q)$.

Lorsque $X$ est une variété algébrique projective définie
sur~$\bQ$ et $\phi\colon X\ra\P^n$ est un morphisme de~$X$
dans un espace projectif, 
on en déduit une fonction $h_\phi=h\circ\phi$ sur~$X(\bQ)$, 
invariante sous l'action de~$\Gal(\bQ/K)$
si $X$ et~$\phi$ sont définis sur un sous-corps~$K$ de~$\bQ$.
Pour l'essentiel, la fonction~$h_\phi$ ne dépend de~$\phi$ que par
l'intermédiaire de la classe d'isomorphisme
du fibré en droites $\phi^*\mathscr O(1)$ dans
le groupe de Picard~$\Pic(X)$.
En effet, si $\psi$ est un morphisme
de~$X$ dans un espace projectif~$\P^m$ tel que $\phi^*\mathscr O(1)$
et $\psi^*\mathscr O(1)$ soient isomorphes, la différence
$h_\phi-h_\psi$ est une fonction bornée sur~$X(\bQ)$.

Notons $\mathscr F(X(\bQ),\R)$ l'espace des fonctions
de~$X(\bQ)$ dans~$\R$ et $\mathscr B$ son sous-espace
des fonctions bornées. 
Via l'étude des plongements de Segre ou Veronese, on démontre
qu'il existe un unique  morphisme d'espaces vectoriels
\[ \Pic(X)\otimes_\Z\R \ra \mathscr F(X(\bQ),\R)/ \mathscr B(X(\bQ),\R) \]
qui applique la classe d'un fibré en droites~$\mathscr L$
sur celle de la fonction~$h_\phi$ lorsque $\phi$
est un morphisme de~$X$ dans un espace projectif 
tel que $\phi^*\mathscr O(1)\simeq\mathscr L$.
Si $\mathscr L$
est un ($\R$-)fibré en droites sur~$X$, on notera abusivement
$h_{\mathscr L}$ une fonction de~$X(\bQ)$ dans~$\R$ qui représente
l'image de~$\mathscr L$ par ce morphisme de groupes.

Nous ferons usage des résultats suivants:
\begin{prop}\label{prop.heights}
Soit $X$ une variété projective définie sur~$\bQ$
et soit $\mathscr L$ un fibré en droites sur~$X$.
\begin{enumerate}
\item  La fonction 
$h_{\mathscr L}$ est minorée en dehors du support du diviseur
de toute section globale de~$\mathscr L$;
\item Si $\mathscr L$ est engendré par ses sections globales,
$h_{\mathscr L}$ est minorée ;
\item Supposons que $\mathscr L$ soit ample. Alors,
pour tout entier~$B$, l'ensemble des points~$x\in X(\bQ)$
tels que $[\Q(x):\Q]\leq B$ et $h_{\mathscr L}(x)\leq B$
est fini \emph{(théorème de \textsc{Northcott)}} ;
\item Soit $f\colon Y\ra X$ un morphisme de variétés algébriques
définies sur~$\bQ$, alors $h_{f^*\mathscr L}-h_{\mathscr L}\circ f$
est une fonction bornée sur~$Y(\bQ)$.
\end{enumerate}
\end{prop}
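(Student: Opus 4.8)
All four statements are the basic properties of the Weil height machine, and the plan is to deduce them from the additive map $\Pic(X)\otimes_\Z\R\to\mathscr F(X(\bQ),\R)/\mathscr B(X(\bQ),\R)$, $\mathscr L\mapsto[h_{\mathscr L}]$, constructed just above, together with two inputs concerning projective space itself: first, that the standard height is non-negative, $h\geq0$ on $\P^n(\bQ)$ — for a point with homogeneous coordinates in a number field~$K$ one has $\sum_v[K_v:\Q_v]\log\max_i\abs{x_i}_v\geq\sum_v[K_v:\Q_v]\log\abs{x_{i_0}}_v=0$ for any index $i_0$ with $x_{i_0}\neq0$, by the product formula — and second, the classical theorem of Northcott for~$\P^n$. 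I would begin with assertion~(4): $\mathscr L\mapsto[h_{f^*\mathscr L}]$ and $\mathscr L\mapsto[h_{\mathscr L}\circ f]$ are two $\R$-linear maps from $\Pic(X)\otimes_\Z\R$ to $\mathscr F(Y(\bQ),\R)/\mathscr B$, so it suffices to check that they agree on very ample~$\mathscr L$ — these generate, since $\mathscr L\otimes\mathscr A^{\otimes n}$ and $\mathscr A^{\otimes n}$ are very ample for $\mathscr A$ very ample and $n$ large, whence every class is a difference of two very ample ones. And if $\mathscr L\simeq\iota^*\mathscr O(1)$ for a closed immersion $\iota\colon X\hookrightarrow\P^n$, then $f^*\mathscr L\simeq(\iota\circ f)^*\mathscr O(1)$, so both classes equal that of $h\circ\iota\circ f$.

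For~(1), let $s$ be a global section of~$\mathscr L$ and $D=\operatorname{div}(s)\geq0$, so that $\mathscr L\simeq\mathscr O_X(D)$. Fixing an ample sheaf~$\mathscr A$, Serre's theorem gives an~$n$ for which $\mathscr M:=\mathscr A^{\otimes n}$ and $\mathscr N:=\mathscr A^{\otimes n}\otimes\mathscr L^{-1}$ are both very ample. Multiplication by~$s$ identifies $H^0(X,\mathscr N)$ with the space of those sections of~$\mathscr M$ whose divisor dominates~$D$, and I would pick a basis of $H^0(X,\mathscr N)$ and complete it to a basis of $H^0(X,\mathscr M)$; the associated closed immersions $\iota_{\mathscr N}\colon X\hookrightarrow\P^m$ and $\iota_{\mathscr M}\colon X\hookrightarrow\P^N$ then satisfy $\iota_{\mathscr N}=\pi\circ\iota_{\mathscr M}$, where $\pi\colon\P^N\dashrightarrow\P^m$ is the linear projection forgetting the last $N-m$ coordinates. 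As $\mathscr N$ is base-point-free, $\pi$ is defined at $\iota_{\mathscr M}(x)$ exactly when $x\notin\abs D$, and there $h(\pi(\iota_{\mathscr M}(x)))\leq h(\iota_{\mathscr M}(x))$ because forgetting coordinates cannot increase the local maxima; hence $h_{\mathscr N}\leq h_{\mathscr M}+O(1)$ on $X(\bQ)\setminus\abs D$, and additivity gives $h_{\mathscr L}=h_{\mathscr M}-h_{\mathscr N}+O(1)\geq O(1)$ there. Assertion~(2) then follows at once: if $\mathscr L$ is globally generated, a finite generating family $s_1,\dots,s_r$ of the finite-dimensional space $H^0(X,\mathscr L)$ has divisors with empty common support, so every point of $X(\bQ)$ avoids some~$\abs{D_i}$ and~(1) provides a uniform lower bound.

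Finally, for~(3) I would choose $m\geq1$ with $\mathscr L^{\otimes m}$ very ample and a closed immersion $\iota\colon X\hookrightarrow\P^n$ with $\iota^*\mathscr O(1)\simeq\mathscr L^{\otimes m}$; then additivity and~(4) give $h\circ\iota=m\,h_{\mathscr L}+O(1)$, so for a suitable constant~$C$ one has $h(\iota(x))\leq mB+C$ whenever $h_{\mathscr L}(x)\leq B$, while $[\Q(\iota(x)):\Q]=[\Q(x):\Q]$ since $\iota$ is a closed immersion. Thus the set in question embeds into $\{y\in\P^n(\bQ):[\Q(y):\Q]\leq B,\ h(y)\leq mB+C\}$, and one concludes by Northcott's theorem on~$\P^n$: normalising a homogeneous coordinate of such a~$y$ to be~$1$, the remaining coordinates lie in $\Q(y)$, hence are algebraic numbers of degree~$\leq B$ and of height bounded in terms of~$h(y)$, of which there are only finitely many. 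The only steps carrying real content are the linear-projection argument in~(1) — the one place where the \emph{section} of~$\mathscr L$, and not merely its class, intervenes — and the appeal in~(3) to this classical finiteness statement on projective space; the remainder is formal manipulation with the additivity of the height machine.
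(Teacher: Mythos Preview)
Your argument is correct; these are indeed the standard proofs one finds in the textbook references. Note, however, that the paper does not actually prove this proposition: immediately after its statement the author writes ``Pour plus de détails et des démonstrations, je renvoie aux ouvrages d'introduction à la géométrie diophantienne'' and cites Lang, Serre, Hindry--Silverman, and Bombieri--Gubler. So there is no proof in the paper to compare with --- the proposition is stated as background, and your write-up supplies exactly the kind of justification those references contain. One small remark on~(2): a more direct route, given the framework already set up in the paper, is that a globally generated~$\mathscr L$ is of the form $\phi^*\mathscr O(1)$ for a morphism $\phi\colon X\to\P^N$, whence $h_{\mathscr L}=h\circ\phi+O(1)\geq O(1)$ by non-negativity of the standard height; your deduction from~(1) is of course also valid.
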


Pour plus de détails et des démonstrations,
je renvoie aux ouvrages d'introduction à la géométrie diophantienne, 
notamment~\cite{lang1983,serre1997,hindry-silverman2000,bombieri-gubler2006}.

Le point de vue de la théorie d'Arakelov (voir~\cite{bost-g-s94})
fournit un moyen efficace pour ne pas travailler à une fonction bornée près.
Dans le cas des variétés semi-abéliennes, il existe un moyen
simple pour \emph{normaliser} les hauteurs que je dois rappeler brièvement;
les hauteurs normalisées interviennent en effet de manière cruciale
dans l'étude des conjectures auxquelles ce rapport est consacré.

\subsection{Hauteurs normalisées sur les variétés semi-abéliennes}

Soit $G$ une variété semi-abélienne. Comme je l'ai rappelé plus haut,
$G$ est extension d'une variété  abélienne~$A$ par un tore~$T$:
\[ 1 \ra T \ra G \xrightarrow p A \ra 0 , \]
et (sur~$\bQ$, ou quitte à effectuer une extension finie)
$T$ est isomorphe à une puissance~$\gm^t$ du groupe multiplicatif.
Comme $G$ n'est que quasi-projectif (à moins que $T=\{1\}$),
les hauteurs dépendent d'une compactification projective de~$G$.
Lorsque $G=T=\gm^t$, on peut considérer $G$ comme un ouvert de
l'espace projectif~$\P^t$; toute autre compactification
équivariante~$P$ (disons projective, lisse) convient, par exemple $(\P^1)^t$.
Soit $\bar G$ le produit contracté $\bar G=G\times^T P$;
c'est une compactification équivariante de~$G$
munie d'une fibration vers~$A$,
toujours notée $p$, dont les fibres sont isomorphes à~$P$.
Pour $n\geq 2$, les endomorphismes de  multiplication par~$n$
sur~$G$ s'étendent en des endomorphismes de~$\bar G$.
En outre, le groupe de Picard de~$\bar G$ se décompose
(non canoniquement) en une somme directe
\[ \Pic(\bar G) \simeq \Pic(A) \oplus \Pic(P), \]
où $\Pic(A)$ est identifié à son image dans~$\Pic(\bar G)$ par
l'homomorphisme injectif~$p^*$.
Après tensorisation par~$\Q$,
le groupe de Picard de~$A$ se décompose 
sous l'action de l'endomorphisme~$[-1]$
en une partie paire et une partie impaire, provenant
respectivement du groupe de Néron--Severi de~$A$
et du groupe des classes d'isomorphisme de fibrés en droites
algébriquement équivalent à~$0$, d'où finalement
une décomposition
\[ \Pic(\bar G)_\Q \simeq \NS(A)_\Q \oplus \Pic^0(A)_\Q \oplus \Pic(P)_\Q.\]
Sous l'action des endomorphismes de multiplication par un entier~$n\geq 2$,
le premier facteur est de poids~$n^2$, tandis que les deux autres
sont de poids~$n$. Supposons que la classe de~$\mathscr L$
appartient à l'un de ces trois facteurs et posons respectivement
$w=2$, $w=1$, $w=1$.
On déduit alors du point~4 de la proposition~\ref{prop.heights}
que la fonction $x\mapsto h_{\mathscr L}([n]x)-n^w h_{\mathscr L}(x)$
est bornée sur~$\bar G(\bQ)$.
Par le procédé de \textsc{Tate} qui consiste à poser
\[ \hat h_{\mathscr L}(x) = \lim_{k\ra\infty} n^{-wk} h_{\mathscr L}([n]^k x),\]
on obtient une fonction $\hat h_{\mathscr L}$ sur~$\bar G(\bQ)$
telle que $\hat h_{\mathscr L}([n]x)=n^w \hat h_{\mathscr L}(x)$
pour tout $x\in \bar G(\bQ)$. Elle ne dépend pas du choix de~$n$.
En outre, la différence $\hat h_{\mathscr L}-h_{\mathscr L}$ est bornée
sur $\bar G(\bQ)$ : la fonction $\hat h_{\mathscr L}$
est appelée \emph{hauteur normalisée} pour le fibré en droites~$\mathscr L$.
Lorsque $G=A$ est une variété abélienne
et $\mathscr L$ est symétrique, on retrouve bien sûr
la forme quadratique de \textsc{Néron--Tate} ;
lorsque $G$ est le tore~$\gm^n$, ouvert de $P=\mathbf P^n$,
et $\mathscr L=\mathscr O(1)$,
$\hat h_{\mathscr L}$ est la hauteur standard.

Par additivité,
on en déduit 
un morphisme d'espaces vectoriels
\[ \Pic(\bar G)_\R \ra \mathscr F(\bar G(\bQ),\R),
\qquad  \mathscr L \mapsto \hat h_{\mathscr L} . \]

La proposition~\ref{prop.heights} s'étend facilement, seule la propriété~4
de fonctorialité requiert  un ajustement:
Sous l'hypothèse que $f\colon G'\ra G$ soit un morphisme
de variétés semi-abéliennes qui s'étend 
en un morphisme $\bar f\colon \bar G'\ra \bar G$ des compactifications
fixées, on a $\hat h_{\mathscr L}(\bar f(x))=\hat h_{\bar f^*\mathscr L}(x)$
pour tout $x\in \bar G'(\bQ)$.

Les points de torsion sont de hauteur normalisée nulle; inversement,
si $\mathscr L$ est ample, on déduit du théorème
de Northcott que les points~$x$ de~$G(\bQ)$
tels que $\hat h_{\mathscr L}(x)=0$ sont des points de torsion.

\subsection{Le problème de Lehmer}

En 1933, D.~H.~\textsc{Lehmer} demandait s'il existe, pour~$\eps>0$,
un polynôme~$P$,
unitaire à coefficients entiers, dont la mesure de Mahler~$\mathrm M(P)$
vérifie $1<\mathrm M(P)<1+\eps$; il ajoutait ne pas
savoir si ce problème a une solution pour $\eps<0{,}176$. 
En termes de hauteurs\footnote{La mesure
de Mahler d'un polynôme~$P$ est définie
par $\mathrm M(P)=\exp\left(\int_0^1 \log\abs{P(e^{2i\pi\theta})}\,\mathrm d\theta\right)$.
Si $P$ est le polynôme
minimal d'un nombre algébrique~$\xi$, on a $\mathrm M(P)=\exp(\deg(P) h(\xi))$;
les polynômes irréductibles de~$\Z[T]$ de mesure de Mahler nulle sont, 
outre le polynôme~$T$, les polynômes cyclotomiques.
Dans son article, \textsc{Lehmer} donne l'exemple du polynôme
minimal d'un nombre de Salem de degré~10 pour lequel
$\mathrm M(P)\approx1{,}1762$.},
la question est l'existence d'un nombre
réel~$c>0$ tel que 
pour tout nombre algébrique~$\xi\in\bQ^*$
qui n'est pas une racine de l'unité, on ait $h(\xi)\geq c/[\Q(\xi):\Q]$.
Une telle minoration serait optimale puisque $h(2^{1/n})=\frac 1n\log 2$
et $\deg(2^{1/n})=n$. En particulier, 
c'est le fait que la hauteur standard
d'un nombre algébrique vérifie une équation
fonctionnelle qui permet de formuler cette question
de façon pertinente.

Bien qu'elle soit encore ouverte, un théorème
de \textsc{Dobrowolski} résout cette question à~$\eps$ près
(et même un peu plus...):
\begin{theo}[\textsc{Dobrowolski} \cite{dobrowolski1979}]
Pour tout $\eps>0$, il existe un nombre réel $c(\eps)>0$
tel que pour tout nombre algébrique~$\xi$ qui n'est ni nul
ni une racine de l'unité, on ait
\[ h(\xi) \geq \frac {c(\eps)}{[\Q(\xi):\Q]^{1+\eps}}. \]
\end{theo}

La démonstration est astucieuse mais élémentaire, l'idée
principale consistant à exploiter les  congruences issues du
petit théorème de Fermat pour de nombreux nombres premiers~$p$.

\medskip

La question se pose, plus généralement, 
de fournir une minoration fine
de la \emph{hauteur normalisée}~$\hat h$ 
d'un point
d'une variété semi-abélienne, lorsque
ce point n'est pas de torsion.
Dans le cas torique ou abélien,
S.~\textsc{David}  a proposé la conjecture suivante.
Comme D.~\textsc{Bertrand} me l'a fait remarquer, le cas
des variétés semi-abéliennes générales pose des problèmes spécifiques.
\begin{conj}[Problème de Lehmer]
\label{conj.lehmer}
Soit $G$ un tore ou une variété abélienne sur~$\bQ$,
soit $g$ sa dimension. 
Soit $\hat h$ une hauteur normalisée associée à un fibré en droites
ample d'une compactification. 
Il existe un nombre réel~$c>0$
tel que pour tout point~$x\in G(\bQ)$ qui n'est contenu
dans aucun sous-groupe algébrique strict de~$G$,
on ait la minoration:
\[ \hat h(x) \geq c \, [\Q(x):\Q]^{-1/g}. \]
\end{conj}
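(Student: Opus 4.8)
Since this statement is an open conjecture, I can only propose the strategy behind the known partial results: \textsc{Dobrowolski}'s transcendence method in its higher‑dimensional incarnation, due to \textsc{Amoroso}--\textsc{David} for tori and to \textsc{David}--\textsc{Hindry} for abelian varieties with complex multiplication, which to date yields the bound only up to a factor $\bigl(\log\log D/\log D\bigr)^{\kappa}$, where $D=[\Q(x):\Q]$ and $\kappa$ depends on $g$; the clean exponent $-1/g$ remains out of reach. The set‑up: fix a smooth projective equivariant compactification $\bar G$, an ample line bundle $\mathscr L$ on it, and let $\hat h=\hat h_{\mathscr L}$ be the associated normalized height. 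Arguing by contradiction, assume $\hat h(x)<c\,D^{-1/g}$ for a small constant $c=c(G,\mathscr L)$ to be pinned down only at the very end.

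First I would construct, by a Siegel‑lemma argument (Minkowski's convex‑body theorem applied to the $\mathcal O_{\Q(x)}$‑lattice of global sections of $\mathscr L^{\otimes L}$), a nonzero section $s$ of $\mathscr L^{\otimes L}$ vanishing to order $\geq T$ along the $\Gal(\bQ/\Q)$‑orbit of $x$, with parameters tuned so that $L\asymp T\,D^{1/g}$; it is precisely the smallness of $\hat h(x)$ that keeps this linear system solvable with $s$ of small logarithmic height. The arithmetic heart is the next step. Choose a parameter $P_0$; for each rational prime $p\leq P_0$ of good reduction take $p$ itself in the toric case, or a prime $\mathfrak p$ above $p$ in the CM field in the abelian case, so that a suitable endomorphism attached to $\mathfrak p$ reduces, modulo $\mathfrak p$, to a power of the Frobenius --- this is the single place the CM hypothesis is used. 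Then $s\bigl([\mathfrak p]\,x^\sigma\bigr)$ is highly $\mathfrak p$‑adically divisible for every conjugate $x^\sigma$, and an extrapolation (Schwarz‑lemma) argument comparing this with the archimedean smallness inherited from the vanishing at $x^\sigma$ --- while tracking the relevant heights through the functoriality of $\hat h$ under $[\mathfrak p]$, the normalized refinement of point (4) of Proposition~\ref{prop.heights} --- forces $s$ to vanish to high order at all the points $[\mathfrak p]\,x^\sigma$ as well.

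The last step is a zero estimate: because $x$ lies on no proper algebraic subgroup, the points $[\mathfrak p]\,x^\sigma$ (for $\sigma\in\Gal(\bQ/\Q)$ and $p\leq P_0$) are spread out enough that a section of $\mathscr L^{\otimes L}$ vanishing along them to such high total order must be identically zero --- this is \textsc{Philippon}'s zero (multiplicity) estimate on commutative algebraic groups, or, when $G$ is a torus, the combinatorial estimate of \textsc{Amoroso}--\textsc{David}, and it is exactly here that the hypothesis on $x$ is indispensable. Balancing $L$, $T$ and $P_0$ against the resulting contradiction then fixes an admissible $c$. \emph{The main obstacle is twofold.} The zero estimate only becomes effective once one sums the contributions of all primes $p\leq P_0$, roughly $P_0/\log P_0$ of them, and this averaging is precisely what introduces the spurious logarithmic factor; reaching the exact exponent $-1/g$ would require a genuinely new idea, just as for Lehmer's question itself. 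Structurally, for a general semi‑abelian variety --- or for an abelian variety without complex multiplication --- multiplication by an integer is no longer a power of the Frobenius modulo $\mathfrak p$, so the arithmetic input of the second step must be replaced by something far more delicate (isogeny estimates, or the effective Bogomolov‑type methods of \textsc{Amoroso}--\textsc{Zannier}, \textsc{Galateau} and \textsc{Viada}), and the zero estimate on a non‑split semi‑abelian variety is itself considerably subtler.
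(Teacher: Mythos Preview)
The statement is labelled \emph{Conjecture} in the paper and is not proved there; the paper merely records its origin (S.~\textsc{David}), the partial results known (\textsc{Laurent}, \textsc{Amoroso}--\textsc{David}, \textsc{David}--\textsc{Hindry}), and remarks that the proofs rest on Diophantine approximation methods whose technicality prevents the author from saying more. You correctly identify this, and your sketch of the Dobrowolski-type transcendence machinery --- auxiliary section via a Siegel lemma, $p$-adic extrapolation using Frobenius lifts (available precisely in the CM case), and a zero estimate exploiting that $x$ lies on no proper subgroup --- is an accurate summary of the method behind the known $-1/g-\eps$ results, and matches in spirit the brief remarks the paper does make (notably that the CM hypothesis is what furnishes the Frobenius lifts). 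There is nothing further to compare.

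One small caution: the conjecture as stated is deliberately restricted to tori and abelian varieties, and the paper notes (following a remark of \textsc{Bertrand}) that the general semi-abelian case poses specific difficulties; so your closing comment about non-split semi-abelian varieties, while reasonable, goes beyond what the conjecture itself claims.
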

Lorsque $G$ est définie sur un corps de nombres~$K$,
une conjecture plus précise remplace le degré $[\Q(x):\Q]^{1/g}$
par l'indice d'obstruction~$\omega_K(x)$, défini
comme le minimum des quantités $\deg(V)^{1/\codim(V)}$,
où $V$ est une sous-variété de~$G$ définie sur~$K$
contenant~$x$. Elle affirme l'existence d'un nombre réel~$c>0$
tel que si $x$ n'est pas de torsion et si $\hat h(x)$ est inférieur
à~$c\,\omega_K(x)^{-1}$, alors il existe un sous-groupe
algébrique~$H$ de~$G$ tel que $\deg(H^\circ)^{1/\codim(H)}\leq c \,\omega_K(x)$.

De fait, M.~\textsc{Laurent} avait démontré un tel résultat  à~$\eps$ près
pour les courbes elliptiques à multiplications
complexes dans~\cite{laurent:1983}.
Dans l'approche de \textsc{Dobrowolski}, 
pour transformer le petit théorème de Fermat
en une congruence, il faut en effet disposer,
pour beaucoup d'idéaux maximaux
de l'anneau des entiers du corps de base,
d'un endomorphisme
de la courbe elliptique qui relève l'endomorphisme
de Frobenius de sa réduction.
C'est précisément ce que permet la théorie de la multiplication complexe.

Dans des articles extrêmement délicats, F.~\textsc{Amoroso} et \textsc{David}
ont étendu cette approche au cas des tores, tandis
que \textsc{David} et M.~\textsc{Hindry} ont traité le cas 
des variétés abéliennes à multiplications complexes
(voir \cite{amoroso-david:1999}, \cite{david-hindry2000}).
Ainsi, quitte à remplacer l'exposant~$-1/g$ par $-1/g-\eps$,
la conjecture~\ref{conj.lehmer} est donc vraie
dans le cas des tores ou des variétés abéliennes à multiplications complexes.
Les preuves reposent sur les méthodes
d'approximation diophantienne, mais leur technicité
m'empêche d'en dire quoi que ce soit dans ce rapport.

\medskip

Dès le théorème initial de \textsc{Bombieri}, \textsc{Masser}
et \textsc{Zannier}, ces minorations de hauteurs ont joué un rôle
crucial dans la démonstration des théorèmes de finitude
auxquels ce rapport est consacré.
Toutefois, des raffinements récents permettent de simplifier leur
utilisation, voire d'en améliorer l'efficacité.
Présentons-les brièvement.

En 2000, \textsc{Amoroso} et R.~\textsc{Dvornicich}
ont démontré que la hauteur d'un élément~$\xi$ de l'extension
cyclotomique maximale de~$\Q$ est minorée par $\log(5)/12$,
à moins que $\xi$ ne soit nul ou une racine de l'unité.
Ce théorème a suscité toute une série d'articles
visant à remplacer, dans les conjectures de type Lehmer,
le degré sur~$\Q$, ou l'indice d'obstruction sur le corps de base~$K$,
par les quantités équivalentes sur le corps~$K_\tors$
engendré sur~$K$ par les coordonnées des points de torsion de~$G$.

\begin{conj}[Problème de Lehmer relatif]
\label{conj.lehmer-relatif}
Soit $G$ un tore ou une variété abélienne sur un corps
de nombres~$K$, soit $g$ sa dimension. 
Soit $K_\tors$ l'extension de~$K$ engendrée par les coordonnées
des points de torsion de~$G$.
Il existe un nombre réel~$c>0$
tel que pour tout point~$x\in G(\bQ)$ qui n'est contenu
dans aucun sous-groupe algébrique strict de~$G$,
on ait la minoration:
\[ \hat h(x) \geq c \, [K_\tors(x):K_\tors]^{-1/g}. \]
\end{conj}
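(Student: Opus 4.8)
Since Conjecture~\ref{conj.lehmer-relatif} is open in general, what follows is not a proof but the line of attack by which its $\eps$-weakened form has been obtained for tori and, in part, for abelian varieties with complex multiplication: an adaptation of the method of \textsc{Dobrowolski} as refined by \textsc{Amoroso} and \textsc{David}, starting from the cyclotomic lower bound of \textsc{Amoroso} and \textsc{Dvornicich} quoted above. Observe first that, since $x$ lies in no proper algebraic subgroup, it is not a torsion point, so $\hat h(x)>0$ by Northcott; the whole content is the quantitative bound in terms of the \emph{relative} degree $D=[K_\tors(x):K_\tors]$.

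First I would construct, by a Siegel-lemma (pigeonhole) argument, a nonzero regular function on~$G$ — a polynomial in a fixed projective embedding of a compactification — of controlled degree and height, vanishing to prescribed high order along the orbit of~$x$ under a well-chosen finite family of endomorphisms: multiplications by small integers when $G$ is a torus, multiplications by a suitable finite set of elements of the CM order when $G$ is a CM abelian variety. The parameters are calibrated so that the arithmetic size of this auxiliary function is incompatible with $\hat h(x)$ being small, \emph{unless} the function is forced to vanish at many further points.

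The \emph{relative} nature of the statement enters at the second step, through Frobenius congruences at a large family of prime ideals~$\mathfrak p$ of the ring of integers of a number field of definition. The classical observation is that reduction modulo~$\mathfrak p$ above a rational prime~$p$ identifies multiplication by~$p$ (torus case), resp. a lift of the Frobenius endomorphism (CM case), with the Frobenius of the reduction, so the reductions of~$x$ and of its image under that endomorphism agree; in the \emph{absolute} problem one then concludes by an integrality (resultant) argument over~$\Z$. Here one must argue over the ring of integers of~$K_\tors$ instead, and the genuine obstacle is that $K_\tors/K$ is infinitely ramified at the primes dividing the torsion orders, so naive integrality breaks down. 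The way around it, following \textsc{Amoroso} and \textsc{Dvornicich} in the cyclotomic case, is to keep only primes~$\mathfrak p$ of good reduction that are unramified, or at worst tamely ramified, in~$K_\tors$ — equivalently, to exploit that the Frobenius at~$\mathfrak p$ acts on the torsion through a \emph{cyclic} group — so that the $\mathfrak p$-adic valuations that intervene remain bounded by a fixed multiple of $\log N\mathfrak p$. This localized estimate plays the role that the global product formula played in the absolute setting.

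Finally I would combine the vanishing of step one, the congruences of step two, and a zero-estimate bounding the order of vanishing of a nonzero function on~$G$ in terms of degrees (in a form valid relative to algebraic subgroups, in the spirit of the zero estimates on group varieties of \textsc{Philippon}) to reach a contradiction unless $\hat h(x)\geq c\,D^{-1/g}$. The delicate part, at the interface of steps two and three, is the simultaneous choice of the set of primes, the multiplicities, and the degree of the auxiliary function, with $D$ — not the absolute degree — governing every inequality. I expect the real difficulty to lie exactly where it already blocks the general case, as \textsc{Bertrand}'s remark signals: for a non-CM abelian variety one lacks the endomorphisms needed to lift Frobenius at enough primes, and for a general semi-abelian variety the extension $1\to T\to G\to A\to 0$ interacts poorly with the field~$K_\tors$.
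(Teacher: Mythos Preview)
The statement is a \emph{conjecture}, and the paper offers no proof of it: it is stated, then followed immediately by the $\eps$-weakened theorem of \textsc{Delsinne} (tori) and \textsc{Carrizosa} (CM abelian varieties), with only bibliographic references and no sketch of argument. You correctly recognised this and chose to outline the strategy behind the known partial results rather than pretend to a proof.

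Your outline is broadly faithful to the Dobrowolski--Amoroso--David machinery as it is understood: auxiliary function via a Siegel lemma, extrapolation through Frobenius-type congruences at many primes, and a zero estimate on the group variety to close the argument. Two small calibrations. First, the phrase ``in part, for abelian varieties with complex multiplication'' undersells \textsc{Carrizosa}'s result: the $\eps$-weakened relative Lehmer bound is established for CM abelian varieties in full, not partially. Second, your description of the ramification issue in~$K_\tors$ is the right intuition, but the actual mechanism in the relative setting is somewhat different from what you sketch: rather than selecting primes that are unramified in the (infinite) extension~$K_\tors$, one works with the action of Frobenius on torsion points and exploits that conjugates of~$x$ over~$K_\tors$ remain conjugates after translation by torsion --- this is what makes the relative obstruction index, rather than the absolute one, govern the estimates. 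The delicate interplay you flag between the choice of primes, multiplicities and degrees is indeed where the technical weight lies, and the papers cited (\cite{delsinne:2009}, \cite{carrizosa:2009}) are the place to see it carried out.

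Your closing remark about the obstruction in the non-CM and general semi-abelian cases is accurate and echoes the paper's own caveat (attributed to \textsc{Bertrand}) that the semi-abelian case poses specific difficulties already for the absolute Lehmer problem.
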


Après divers travaux, les meilleurs résultats dans cette direction
sont dus à E.~\textsc{Delsinne} pour les tores
et à M.~\textsc{Carrizosa} pour les variétés
abéliennes.
\begin{theo}[\textsc{Delsinne}~\cite{delsinne:2009},
\textsc{Carrizosa}~\cite{carrizosa:2009}]
\label{theo.delsinne}
Soit $G$ une variété semi-abélienne définie sur un corps
de nombres~$K$, soit $g$ la dimension de~$G$
et soit $K_\tors$ l'extension de~$K$
engendrée par les points de torsion de~$G$.
Supposons que $G$ soit un tore 
ou une variété abélienne à multiplications complexes.
Alors, pour tout $\eps>0$, il existe un nombre réel~$c>0$
tel que pour tout point $x\in G(\bQ)$ qui n'appartient
à aucun sous-groupe algébrique strict de~$G$,
\[ h(x) \geq c\, [K_\tors(x):K_\tors]^{-1/g-\eps} . \]
\end{theo}

 
%
%
 
\subsection{La conjecture de Bogomolov}

La conjecture énoncée par F.~\textsc{Bogomolov} dans~\cite{bogomolov80b}
suggère un autre énoncé de minoration de la hauteur
d'un point d'une variété abélienne dans lequel la contrainte
n'est pas le corps de définition de ce point, mais son appartenance
à une sous-variété fixée.

Soit $G$ une variété semi-abélienne définie sur~$\bQ$; 
considérons un fibré en droites ample d'une compactification~$\bar G$
et les fonctions degré et hauteur canonique,
naturellement notées $\deg$ et $\hat h$, qui lui sont associées.

Soit $X$ une sous-variété (fermée, irréductible) de~$G$.
En considérant un morphisme fini de l'adhérence de~$X$
dans~$\bar G$ vers un espace projectif
associé à ce fibré en droites, on démontre aisément qu'il 
existe un nombre réel~$\theta$ tel que l'ensemble des points
de~$X(\bQ)$  de hauteur~$\leq\theta$ est dense dans~$X$
pour la topologie de Zariski.
Le \emph{minimum essentiel} de~$X$ est la borne inférieure
de ces nombres réels; on le note $\hat\mu(X)$.
Si $X$ est une sous-variété semi-abélienne de~$G$,
l'ensemble de ses points de torsion est dense dans~$X$
et $\hat\mu(X)=0$; c'est essentiellement la seule possibilité
pour que $\hat\mu(X)$ soit nul.

\begin{theo}\label{theo.bogomolov}
Si $X$ n'est pas le translaté d'une sous-variété semi-abélienne
de~$G$ par un point de torsion, on a $\hat\mu(X)>0$.
\end{theo}
Le cas des tores est dû à S.-W.~\textsc{Zhang}~\cite{zhang95}
(une preuve ultérieure, plus élémentaire, se trouve
dans~\cite{bombieri-zannier:1995,zannier:2009}).
Dans le cas des variétés abéliennes,
la démonstration de ce théorème  par E.~\textsc{Ullmo} (lorsque $X$
est une courbe dans sa jacobienne) et S.-W.~\textsc{Zhang} (en général)
repose sur des techniques d'équidistribution en géométrie d'Arakelov;
elle a été exposée dans ce séminaire (voir \cite{abbes1997}). 
Par une méthode plus proche de 
la géométrie diophantienne {\og traditionnelle\fg},
S.~\textsc{David}  et P.~\textsc{Philippon} ont redémontré
ces résultats et traité le cas général
des variétés semi-abéliennes (voir \cite{david-p98,david-p99,david-p2000}).

Ces dernières démonstrations 
ont en outre l'intérêt de fournir une minoration effective de~$\hat\mu(X)$.
Dans le cas où $X$ n'est pas le translaté d'une sous-variété
semi-abélienne de~$G$,
ces auteurs établissent en effet une minoration de $\hat\mu(X)$
inversement proportionnelle à une puissance du degré de~$X$.
(La minoration établie dans~\cite{bombieri-zannier:1995} pour les tores,
de moins bonne qualité, montrait comment déduire
une minoration géométrique du théorème ci-dessus.)
Dans cette optique,
un énoncé essentiellement optimal a été prouvé par
\textsc{Amoroso} et \textsc{David} dans~\cite{amoroso-david2003} dans le cas
des tores (voir aussi~\cite{amoroso-viada:2009});
les énoncés de ces références sont explicites,
je simplifie ici les termes logarithmiques: 
\begin{theo}
Soit $G$ un tore, $\bar G$ une compactification équivariante de~$G$,
$\mathscr L$ un fibré en droites ample sur~$\bar G$, $\deg$
et $\hat h$ les fonctions degré et hauteur normalisée associées.
Pour tout~$\eps>0$, 
il existe un nombre réel~$c$ (ne dépendant que de~$G$, $\bar G$, $\mathscr L$ et~$\eps$)
tel que l'on ait, pour toute sous-variété fermée $X\subsetneq G$,
l'inégalité
\[ \hat\mu(X) \geq  c\, \deg(X)^{-1/\codim(X)-\eps} ,\]
pourvu que $X$ ne soit contenue
dans aucun translaté de sous-groupe algébrique strict de~$G$.
\end{theo}

À~$\eps$ près, cet énoncé est optimal: 
en remplaçant $X$ par son inverse par la multiplication par
des entiers~$n\geq 2$, on voit que le meilleur
exposant possible du degré est l'opposé de l'inverse
de la codimension de~$X$ dans le plus petit sous-groupe
algébrique de~$G$ qui le contient.
Noter aussi que lorsque $X$ est un translaté d'un sous-groupe 
algébrique~$H$ de~$G$
par un point qui n'est pas de torsion modulo~$H$,
on a bien $\hat\mu(X)>0$, mais l'obtention d'une meilleure
borne relève du problème de Lehmer dans~$G/H$.

Il est naturel de poser une conjecture similaire dans
le cas des variétés abéliennes (voire des variétés semi-abéliennes):
\begin{conj}[Problème de Bogomolov effectif]
\label{conj.bogomolov}
Soit $G$ un tore ou une variété abélienne.
Soit $\hat h$ une hauteur normalisée associée à un fibré
en droites ample d'une compactification.
Il existe un nombre réel~$c>0$
tel que pour toute sous-variété (fermée, irréductible) $X$
de~$G$, distincte de~$G$, 
qui n'est pas contenue dans un translaté de sous-groupe algébrique
strict de~$G$, on ait la minoration
\[ \hat\mu(X) \geq c\, (\deg(X))^{-1/\codim(X)}. \]
\end{conj}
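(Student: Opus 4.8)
The plan is to adapt the transcendence-theoretic, Dobrowolski-style method by which \textsc{Amoroso} and \textsc{David} obtained the slightly weaker, $\eps$-afflicted, bound of the preceding theorem, with the two separate goals of suppressing the $\eps$ in the toric case and of extending the argument to abelian varieties, starting with those having complex multiplication. Throughout one fixes the ample $\mathscr L$ on a compactification $\bar G$, with associated $\deg$ and $\hat h$, and argues by contradiction: suppose $X\subsetneq G$ is closed and irreducible, not contained in any translate of a proper algebraic subgroup, and yet $\hat\mu(X)$ is very small compared with $\deg(X)^{-1/\codim(X)}$.

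First I would reduce the statement to an assertion about the multiples $[p]X$ for a suitable family of rational primes $p$ of good reduction for $G$. The key point is the \emph{congruence}: modulo a prime above a good $p$, the isogeny $[p]$ reduces to (a twist of) the Frobenius, so that $[p]X$ and $X$ have, modulo $p$, a large common locus of vanishing. The heart of the argument is then an \emph{auxiliary construction}: via an arithmetic Siegel lemma (controlling heights by Arakelov theory, the dimension count coming from the Hilbert polynomial of $\mathscr L$ restricted to $X$ and to its multiples) one produces a nonzero global section $s$ of a high tensor power $\mathscr L^{\otimes D}$ on $\bar G$ vanishing to prescribed high order $T$ along $X$, with its own height small — here the smallness of $\hat\mu(X)$ is precisely what makes the arithmetic estimate feasible. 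Feeding the congruence into the vanishing of $s$ along $X$ forces $s$ to vanish, modulo many primes $p$, to order $T$ along $[p]X$ as well; combining the resulting local divisibilities with an upper bound for the height of $s$, and with the fact that $X$ is \emph{not} contained in a translate of a proper subgroup (which prevents $s$ from vanishing identically on $X$), yields a numerical inequality contradicting the smallness hypothesis. To make the final step rigorous one invokes a \emph{zero estimate} on the semi-abelian variety $G$, in the style of \textsc{Philippon}: a section vanishing to too high an order along $X$ and its multiples must in fact vanish along a translate of a proper algebraic subgroup containing $X$, the excluded case.

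The main obstacle is exactly where the $\eps$ enters in the known results, and where the abelian case parts company with the toric one: the congruence step needs, for \emph{many} maximal ideals of the ring of integers of the base field, an endomorphism of $G$ lifting the Frobenius of the reduction. For $G=\gm^n$ the map $x\mapsto x^p$ does this perfectly, yet even then controlling the \emph{number} of usable primes costs a logarithmic factor, i.e.\ the $\eps$; removing it seems to demand a genuinely new idea, as in the classical Lehmer problem, which is the case $X$ a point of $\gm$. For an abelian variety $[p]$ reduces not to Frobenius but to $F\circ V$, so the argument runs cleanly only when $\End(G)$ is large enough to furnish a true Frobenius lift — the complex multiplication hypothesis of \textsc{David} and \textsc{Hindry} — while the general abelian, a fortiori the general semi-abelian, case, where no such lift exists, appears to lie beyond present techniques. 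A secondary difficulty is that the passage from a pointwise, Lehmer-type estimate to a statement about the essential minimum is not formal: one must either build the auxiliary section directly on $X$, handling the intersection theory of the cycles $[p]X$ inside $\bar G$, or run a covering and specialisation argument that typically degrades the exponent and would therefore, by itself, only reprove the $\eps$-version.
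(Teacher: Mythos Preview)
The statement you are trying to prove is labeled \texttt{conj.bogomolov} in the paper precisely because it is a \emph{conjecture}: the paper does not prove it, and indeed it is open. What the paper does is state it, explain that \textsc{Amoroso} and \textsc{David} established the $\eps$-weakened version for tori, and then record \textsc{Galateau}'s theorem (th\'eor\`eme~\ref{theo.galateau}) giving the $\eps$-weakened version for abelian varieties under an ordinary-reduction density hypothesis. There is no proof in the paper for you to compare your attempt against.

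Your proposal is, in fact, not a proof either, and you say so yourself. You correctly identify the Dobrowolski-style machinery (auxiliary section, congruence via Frobenius lifts, zero estimate \`a la \textsc{Philippon}) underlying the known $\eps$-afflicted results, and then you explicitly note that removing the $\eps$ ``seems to demand a genuinely new idea, as in the classical Lehmer problem'' and that the general abelian case ``appears to lie beyond present techniques''. Those admissions are accurate: the logarithmic loss from counting usable primes is exactly what produces the $\eps$, and no one knows how to eliminate it even for $\gm$ and $X$ a point (that would solve Lehmer's problem). So what you have written is a fair summary of the existing strategy and its known obstructions, but it is not a proof of the conjecture, and none is expected of you here.
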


A.~\textsc{Galateau} a fait de grands progrès dans cette direction;
l'intérêt pour les questions présentées dans ce rapport est qu'il dépasse
le strict cadre des variétés abéliennes  à multiplications complexes.

Le théorème de \textsc{Galateau} est alors le suivant:
\begin{theo}[\cite{galateau:2010}]
\label{theo.galateau}
Soit $G$ une variété abélienne définie sur~$\bQ$,
$\mathscr L$ un fibré en droites ample sur~$G$,
$\deg$ et $\hat h$ les fonctions degré et hauteur
normalisée associées.
On suppose vérifiée l'hypothèse suivante:
\begin{quote}
Il existe un corps de nombres~$K$ sur lequel $G$
est définie et tel que l'ensemble des idéaux premiers~$\mathfrak p$
de l'anneau des entiers de~$K$ en lequel $G$ a bonne réduction
ordinaire soit de densité strictement positive.
\end{quote}
Pour tout~$\eps>0$,
il existe un nombre réel~$c>0$ (ne dépendant que de~$G$, $\mathscr L$,
$\eps$)
tel que 
\[ \hat\mu(X) > c\, (\deg(X))^{-1/\codim(X)-\eps} \]
pour toute sous-variété fermée irréductible~$X$ de~$G$,
distincte de~$G$
qui n'est pas contenue dans un translaté de sous-variété abélienne stricte.
\end{theo}

Rappelons qu'on dit que $G$ 
a bonne réduction en l'idéal premier~$\mathfrak p$
de~$K$ s'il existe un schéma abélien~$\mathscr G$ sur
l'anneau $\mathfrak o_{K,\mathfrak p}$ qui étend~$G$;
sa réduction est alors la variété abélienne 
$G_{\mathfrak p}=\mathscr G\otimes \F_{\mathfrak p}$ sur le corps
résiduel $\F_{\mathfrak p}=\mathfrak o_{K,\mathfrak p}/\mathfrak p$.
Elle a de plus bonne réduction ordinaire si,
dans une clôture algébrique de~$\F_{\mathfrak p}$,
le groupe des points de $p$-torsion de~$G_{\mathfrak p}$
est de cardinal $p^{\dim(G)}$, où l'on a noté $p$ la caractéristique
du corps fini~$\F_{\mathfrak p}$. 

Enfin, la densité considérée est la densité naturelle.

Pour évaluer le statut de l'hypothèse faite dans le théorème
précédent, posons une définition.

\begin{defi}
Soit $G$ une variété abélienne définie sur~$\bQ$.
On dit que~$G$ est banale s'il existe un corps de nombres~$K$
sur lequel $G$ est définie et tel que l'ensemble des idéaux premiers de~$K$
en lesquels $G$ ait bonne réduction \emph{ordinaire}
soit de densité égale à~$1$.
\end{defi}
Un produit de variétés abéliennes banales,
un quotient d'une variété abélienne banale sont banals.
En outre, 
pour une variété abélienne sur un corps de caractéristique~\mbox{$p>0$},
être ordinaire est une propriété générique dans l'espace des modules
des variétés abéliennes. 
Il est ainsi conjecturé que
toute variété abélienne~$G$ définie sur~$\bQ$
est banale (\cf~\textsc{Pink}~\cite{pink:1998}, \S7).
Dans cette direction, on a les résultats suivants:

\begin{prop}
Soit $G$ une variété abélienne  définie sur~$\bQ$.
Si $\dim(G)\leq 2$ ou si $G$ est une variété abélienne à multiplications
complexes, alors $G$ est banale.
\end{prop}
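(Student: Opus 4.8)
*Soit $G$ une variété abélienne définie sur~$\bQ$. Si $\dim(G)\leq 2$ ou si $G$ est une variété abélienne à multiplications complexes, alors $G$ est banale.*

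Le plan est de traiter à part le cas des variétés à multiplications complexes, de dimension quelconque, et celui des variétés de dimension~$\leq 2$ dépourvues de telles multiplications, après quelques réductions formelles. On observe d'abord que la propriété d'être banale est invariante par isogénie~: si $\phi\colon G\ra G'$ est une isogénie, que l'on peut supposer définie sur un corps de nombres~$K$ quitte à l'agrandir, alors, par le critère de Néron--Ogg--Shafarevich, $G$ et~$G'$ ont bonne réduction aux mêmes idéaux premiers de~$K$, et en un tel idéal~$\mathfrak p$, de caractéristique résiduelle~$p$, le $p$-rang de la réduction — hauteur de la partie étale de son groupe $p$-divisible — est inchangé~; les deux variétés sont donc banales simultanément. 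Comme un produit de variétés abéliennes banales est banal, et comme l'intersection de deux ensembles d'idéaux premiers de densité~$1$ est encore de densité~$1$ (si bien que l'on peut toujours se placer sur un corps de définition commun), on se ramène au cas où $G$ est simple~; et, en dimension~$\leq 2$, si $G$ n'est pas simple, elle est isogène à un produit de deux courbes elliptiques, ce qui ramène à la dimension~$1$.

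\emph{Le cas des multiplications complexes.} Quitte à remplacer $G$ par une variété isogène, on peut supposer que $\End(G_{\bQ})$ est l'anneau des entiers d'un corps de type CM~$E$, de degré~$2g$ où $g=\dim(G)$. On agrandit le corps de définition~$K$ de façon que $E\subset K$, que $K/\Q$ soit galoisienne, et que — d'après le théorème de \textsc{Serre}--\textsc{Tate} sur la réduction potentiellement bonne des variétés à multiplications complexes — $G$ ait partout bonne réduction. Un comptage élémentaire montre que les idéaux premiers de~$K$ de degré résiduel~$\geq 2$, de norme~$\geq p^2$, forment un ensemble de densité nulle~; presque tout idéal premier de~$K$ est donc de degré~$1$, situé au-dessus d'un nombre premier~$p$ totalement décomposé dans~$K$, donc dans~$E$. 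Pour un tel idéal~$\mathfrak p$, la description par \textsc{Shimura} et \textsc{Taniyama} du groupe $p$-divisible d'une variété à multiplications complexes — il se décompose, selon les places de~$E$ au-dessus de~$p$, en facteurs de hauteur~$1$ dont exactement~$g$ deviennent étales après réduction — montre que le $p$-rang de~$G_{\mathfrak p}$ est égal à~$g$, c'est-à-dire que $G_{\mathfrak p}$ est ordinaire. L'ensemble des idéaux premiers de~$K$ en lesquels $G$ a bonne réduction ordinaire est donc de densité~$1$, et $G$ est banale.

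\emph{Le cas de la dimension~$\leq 2$ sans multiplications complexes.} On peut alors supposer $G$ simple, de dimension~$1$ ou~$2$. On veut montrer que l'ensemble des idéaux premiers de réduction non ordinaire est de densité nulle~; comme les idéaux premiers de mauvaise réduction ou de degré résiduel~$\geq 2$ forment déjà un ensemble de densité nulle, on se limite aux idéaux premiers~$\mathfrak p$ de degré~$1$, de norme un nombre premier~$p$. Soit $P_{\mathfrak p}$ le polynôme caractéristique du Frobenius en~$\mathfrak p$ et $b$ le coefficient de~$T^{g}$ dans~$P_{\mathfrak p}$~; d'après le critère des polygones de Newton, $G_{\mathfrak p}$ est ordinaire si et seulement si $p\nmid b$. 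Si $G_{\mathfrak p}$ n'est pas ordinaire, alors $p\mid b$~; comme la borne de Weil donne $\abs b\leq\binom{2g}{g}p^{g/2}$, le quotient~$b/p$ est un entier appartenant à un ensemble fini \emph{indépendant de~$\mathfrak p$} (pour $g=1$ cela force même $b=0$ dès que $p>4$). Autrement dit, le Frobenius en~$\mathfrak p$ appartient à la sous-variété~$Z$ du groupe de monodromie algébrique~$H\subset\mathrm{GSp}_{2g}$ définie par la condition que le quotient du coefficient de~$T^{g}$ par le multiplicateur de similitude appartienne à cet ensemble fini~; c'est une sous-variété fermée, \emph{propre} dès que $H$ n'y est pas contenu. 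Il reste deux points à vérifier. D'une part, l'image de Galois~$\Gamma$ dans~$H(\Z_\ell)$ est ouverte~: c'est le théorème de \textsc{Bogomolov} (ou, dans le cas $\End(G_{\bQ})=\Z$, le théorème d'image ouverte de \textsc{Serre}). D'autre part, $H\not\subset Z$~; on le voit en parcourant la classification d'\textsc{Albert}, selon laquelle une variété abélienne simple de dimension~$\leq 2$ sans multiplications complexes a pour algèbre d'endomorphismes soit~$\Q$, soit un corps quadratique réel, soit une algèbre de quaternions indéfinie sur~$\Q$, et dans chacun de ces cas la structure connue du groupe de monodromie ($\mathrm{GSp}_{2g}$ si $\End(G_{\bQ})=\Z$, une forme de~$\GL_2\times\GL_2$ pour les multiplications réelles, un facteur~$\GL_2$ agissant avec multiplicité~$2$ pour les multiplications quaternioniques) permet de calculer le coefficient de~$T^{g}$ et de constater qu'il n'est pas astreint à l'ensemble fini~; dans les deux derniers cas, la non-ordinarité se ramène d'ailleurs à l'annulation de la trace du Frobenius sur une représentation galoisienne de dimension~$2$ qui, faute de multiplications complexes, est d'image $\ell$-adique ouverte. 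Une sous-variété fermée propre de~$H$ étant de mesure de Haar nulle dans l'ouvert~$\Gamma$, le théorème de Chebotarev entraîne que l'ensemble des idéaux premiers concernés est de densité nulle, et $G$ est banale.

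\emph{Principale difficulté.} Le point délicat est ce dernier cas~: la traduction \og non ordinaire $\Longleftrightarrow p\mid b$\fg{} par les polygones de Newton demande à être justifiée soigneusement, et surtout le contrôle du groupe de monodromie~$H$ et la vérification de l'inclusion stricte $Z\subsetneq H$ reposent sur les résultats — connus mais non triviaux — de \textsc{Serre} et \textsc{Pink} sur les groupes de Mumford--Tate et la monodromie $\ell$-adique des surfaces abéliennes. Le cas des variétés à multiplications complexes, en revanche, est essentiellement classique.
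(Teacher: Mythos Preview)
Le texte ne donne pas de démonstration de cette proposition : il se contente de citer \textsc{Serre} pour les courbes elliptiques, \textsc{Ogus} pour les surfaces abéliennes, et d'indiquer que le cas CM \og résulte de cette théorie\fg. Votre proposition fournit donc bien davantage que l'article, et son plan --- théorie de Shimura--Taniyama pour le cas CM, puis polygone de Newton, bornes de Weil et Chebotarev $\ell$-adique pour la dimension~$\leq 2$ --- est conforme à ce que contiennent effectivement les références citées. L'argument CM est correct et classique ; pour les courbes elliptiques sans CM, votre réduction à $\{\text{trace}=0\}$ combinée à l'image ouverte de \textsc{Serre} est exactement l'argument originel.

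Un point mérite toutefois d'être rectifié. Vous attribuez à \textsc{Bogomolov} le fait que l'image galoisienne~$\Gamma$ est ouverte dans~$H(\Z_\ell)$ ; c'est inexact. Le théorème de \textsc{Bogomolov} affirme seulement que $\Gamma$ contient un sous-groupe ouvert des homothéties, ce qui ne suffit pas ici. L'énoncé d'image ouverte dont vous avez besoin --- $\Gamma$ ouvert dans le groupe de monodromie algébrique --- est un résultat nettement plus profond, connu cas par cas : \textsc{Serre} pour $\End(G_{\bQ})=\Z$ en dimension~$1$ et, plus tardivement, en dimension~$2$ (groupe $\mathrm{GSp}_4$) ; \textsc{Ribet} pour les surfaces à multiplications réelles ; \textsc{Ohta} et d'autres pour les multiplications quaternioniques. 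Cette dépendance à des théorèmes d'image ouverte spécifiques est précisément la raison pour laquelle l'énoncé n'est pas connu au-delà de la dimension~$2$ hors du cas CM, comme le rappelle le paragraphe du texte qui suit la proposition (conditions de \textsc{Noot} et \textsc{Pink}). Votre phrase finale identifie bien la difficulté, mais la référence doit être corrigée.
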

Le cas des courbes elliptiques est dû à J-P.~\textsc{Serre}~\cite{serre68b},
celui des surfaces abéliennes à A.~\textsc{Ogus}~\cite{ogus82},
et le cas des variétés abéliennes à multiplications complexes
résulte de cette théorie. Lorsque $G$ est définie
sur un corps de nombres~$K$, R.~\textsc{Noot}
et \textsc{Pink} ont
aussi donné des conditions suffisantes sur l'action
du groupe de Galois $\Gal(\bQ/K)$ sur les modules de Tate de~$G$
assurant la conclusion de la proposition.

\section{Théorèmes de finitude}

\subsection{Majoration de la hauteur
hors de l'ensemble exceptionnel}

Soit $G$ une variété semi-abélienne et soit $X$ une sous-variété
(fermée, irréductible) de~$G$.
Pour prouver la finitude de l'ensemble
$\Sigma=X^{\ta}(\bQ)\cap G^{[1+\dim(X)]}$,
l'approche inaugurée par \textsc{Bombieri}, \textsc{Masser} et \textsc{Zannier} 
dans leur article~\cite{bombieri-masser-zannier:1999}
fonctionne en deux étapes.

La première étape, peut-être la plus délicate, 
consiste à prouver que l'ensemble~$\Sigma$
est de hauteur bornée.
La seconde utilise des minorations de hauteurs
dans l'esprit de la conjecture de {Lehmer}
pour en déduire que le degré du corps
de définition des points de~$\Sigma$ est uniformément borné.
Le théorème de \textsc{Northcott}  entraîne alors
que $\Sigma$ est fini.

C'est à cette première étape qu'est consacré ce paragraphe.
Le cas des courbes possède une solution assez simple mais 
l'étude de la dimension supérieure s'avère plus délicate.
Après de nombreux résultats partiels dans cette direction, 
\textsc{Habegger} a finalement démontré le résultat suivant.
(Dans le cas des tores, il s'agit 
de la \emph{Bounded Height Conjecture} de~\cite{bombieri-masser-zannier:2007}.)

\begin{theo}[\textsc{Habegger}, \cite{habegger:2009a,habegger:2009d}]
\label{theo.habegger}
Soit $G$ un tore ou une variété abélienne et soit
$X$ une sous-variété (fermée, irréductible) de~$G$.
L'ensemble $X^\oa(\bQ)\cap G^{[\dim(X)]}$ est de hauteur bornée.
\end{theo}

Commençons par deux remarques sur le caractère
essentiellement optimal de cet énoncé
(voir~\cite{habegger:2008,viada:2009b}).
\begin{rema}
\emph{L'ensemble $X^\oa(\bQ)\cap G^{[\dim(X)]}$ 
peut être dense dans~$X$.} 

On l'a par exemple vu dans le cas d'une courbe de~$\gm^n$,
un peu après l'énoncé du théorème~\ref{theo.maurin}.
\end{rema}

\begin{rema}
\emph{Si $Y$ est  une
composante irréductible de~$X\setminus X^\oa$,
il n'existe pas forcément d'ouvert dense~$U$ de~$Y$
tel que la hauteur soit bornée  sur $U(\bQ)\cap G^{[\dim(X)]}$.}

Contentons-nous de traiter le cas d'une courbe~$X$ dans~$G=\gm^n$
telle que $X=X^\oa$.
Dans ce cas, nous devons démontrer qu'exiger une relation
de dépendance multiplicative non triviale entre les coordonnées
d'un point de~$X$ ne suffit pas à majorer sa hauteur. 
Par un changement de coordonnées
sur~$\gm^n$, on se ramène en effet au cas où les restrictions
à~$X$ des $m$ premières coordonnées sont multiplicativement indépendantes  
et celles des $n-m>0$ dernières sont constantes 
de valeurs~$\xi_{m+1},\dots,\xi_n$.
Si $\xi_n$ est une racine de l'unité, $X$ est contenu
dans un sous-groupe algébrique d'équation $x_n^e=1$,
tout point de~$X$ satisfait une relation de dépendance
multiplicative non triviale et la hauteur n'est bornée
sur aucun ouvert non vide de~$X$. Sinon, 
pour presque tout entier naturel~$a$,
l'intersection de~$X$ avec le sous-tore d'équation $x_1=x_n^a$
n'est pas vide et contient un point~$P_a$ dont la hauteur
vérifie $h(P_a)\geq h(x_1(P_a))=h(x_n(P_a)^a)=ah(\xi_n)$,
donc tend vers l'infini avec~$a$.
\end{rema}

Expliquons maintenant la preuve 
du théorème~\ref{theo.habegger}
dans le cas des tores, suivant~\cite{habegger:2009d};
je renvoie à~\cite{habegger:2009a} pour celle, voisine,
du cas abélien.

Si $m$ et $n$ sont des entiers naturels, rappelons
qu'un morphisme~$\phi$ de~$\gm^n$ dans~$\gm^m$ est donné
par $m$~monômes en $n$~variables; on identifie alors~$\phi$
à la matrice~$M_\phi$ de taille~$m\times n$ formée par les exposants
de ces monômes. 
On notera aussi $\norm\phi$ la norme euclidienne de cette matrice.

Posons $m=\dim(X)$.
Si $\phi$ est un morphisme de~$\gm^n$ dans~$\gm^m$,
notons $\Delta_X(\phi)$ le degré générique
du morphisme $\phi|_X\colon X\ra\gm^m$. 
Il est homogène de degré~$m$ en la matrice de~$\phi$:
pour tout entier~$a\in\Z$, $\Delta_X(\phi^a)=\abs a^m\Delta_X(\phi)$.
Il se calcule aussi à l'aide de la théorie de l'intersection.
Soit en effet~$X_\phi$ l'adhérence du graphe de~$\phi$
dans~$\P^n\times\P^m$ et notons $p_1$, $p_2$ les deux projections
de~$X_\phi$ sur~$\P^n$ et~$\P^m$.
Alors, 
\begin{equation}
\Delta_X(\phi) = \deg(c_1(p_2^*\mathscr O(1))^m\cap [X_\phi]). 
\end{equation}

\begin{lemm}\label{lemm.minoration}
Pour tout~$\phi\colon\gm^n\ra\gm^m$, il existe 
un nombre réel~$c(\phi)$ et un ouvert dense~$U_\phi$ de~$X$
tel que l'on ait, pour tout $x\in U_\phi(\bQ)$,
\begin{equation}
 h(\phi(x)) \gg \frac{\Delta_X(\phi)}{\norm{\phi}^{m-1}} h(x) - c(\phi), 
\end{equation}
où la constante implicite dans le symbole~$\gg$ est indépendante de~$\phi$.\footnote{%
Si $u$ et~$v$ sont deux fonctions, j'utilise la notation $u\gg v$
pour dire qu'il existe un nombre réel~$c>0$
tel que $u\geq cv$; si $w$ est un paramètre, $u\gg_w v$
signifie que pour tout~$w$, il existe un nombre réel~$c_w>0$
tel que $u\geq c_w v$.}

\end{lemm}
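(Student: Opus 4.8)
The plan is to reduce the statement to a height inequality on the graph closure $X_\phi \subset \P^n \times \P^m$ and then estimate intersection numbers. First I would replace the rational map $\phi|_X$ by the honest morphism given on the smooth locus, so that the inequality is only claimed on a dense open $U_\phi$; this is exactly what allows us to ignore the indeterminacy locus and the components of $X \setminus X^\oa$. On $X_\phi$ one has two natural line bundles, $\mathscr M_1 = p_1^* \mathscr O(1)$ and $\mathscr M_2 = p_2^*\mathscr O(1)$, and the height of $\phi(x)$ is, up to $O(1)$, the height $h_{\mathscr M_2}$ evaluated at a point of $X_\phi$ lying over $x$, while $h(x)$ is $h_{\mathscr M_1}$ there (using functoriality, point~4 of Proposition~\ref{prop.heights}). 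So the content is a comparison of $h_{\mathscr M_2}$ and $h_{\mathscr M_1}$ on points of the $m$-dimensional variety $X_\phi$.

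The key geometric input is that $\mathscr M_2$ is globally generated and $\mathscr M_1$ is ample on $X_\phi$, hence $D\,\mathscr M_1 - \mathscr M_2$ is globally generated for $D$ large enough; more precisely the right comparison constant is governed by the intersection numbers $\deg\big(c_1(\mathscr M_2)^m \cap [X_\phi]\big) = \Delta_X(\phi)$ and $\deg\big(c_1(\mathscr M_1)^{m-1} c_1(\mathscr M_2) \cap [X_\phi]\big)$, together with the normalization $\deg\big(c_1(\mathscr M_1)^m \cap [X_\phi]\big) = \deg(X)$. The plan is: (i) bound the mixed intersection number $\deg\big(c_1(\mathscr M_1)^{m-1}c_1(\mathscr M_2)\cap[X_\phi]\big)$ from above by something of size $\norm{\phi}\cdot\deg(X)$, since the entries of $M_\phi$ control how $\mathscr M_2$ pulls back in terms of $\mathscr M_1$; (ii) use that $\mathscr M_2^{\otimes \norm\phi} \otimes \mathscr M_1^{\otimes(-1)}$-type bundles, or rather a suitable positive combination, is globally generated on $X_\phi$ away from a proper closed set, so that $\norm\phi\, h_{\mathscr M_2} - h_{\mathscr M_1}$ is bounded below on a dense open; (iii) feed in the normalization to convert the crude bound into the sharp factor $\Delta_X(\phi)/\norm\phi^{m-1}$. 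Concretely, one writes the pushforward of $c_1(\mathscr M_2) \cap [X_\phi]$ as a cycle on $X$ of degree $\Delta_X(\phi)$ and compares it, via the Siu-type or log-concavity estimate for intersection numbers on an $m$-fold, with the cycle $\norm\phi\cdot c_1(\mathscr M_1)\cap[X_\phi]$; the uniformity of the implicit constant in $\gg$ comes from the fact that all these manipulations take place inside the fixed ambient $\P^n\times\P^m$ and only the projective degree of $X$ (not of $X_\phi$) enters.

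I expect the main obstacle to be step~(ii): passing from the intersection-theoretic inequality between \emph{numbers} to a pointwise height inequality on a \emph{dense open set}, with the implied constant independent of $\phi$. The clean way is an effective Nakai--Moishezon / global generation statement: on $X_\phi$, the $\R$-divisor class $\tfrac{\Delta_X(\phi)}{\norm\phi^{m-1}}\mathscr M_1 - \mathscr M_2$ (plus a bounded correction coming from the boundary of the compactification, which is where $c(\phi)$ absorbs non-uniform terms) has a global section whose zero locus is a proper closed subset $X_\phi \setminus U'_\phi$; then point~1 of Proposition~\ref{prop.heights} gives $h_{\mathscr M_1} \gg \tfrac{\Delta_X(\phi)}{\norm\phi^{m-1}} {}^{-1}\,$... — i.e. exactly the claimed inequality — off the image $U_\phi$ of $U'_\phi$ in $X$. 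Making the section exist with control uniform in $\phi$ requires bounding the higher cohomology or, more elementarily, exhibiting the section explicitly as a product of coordinate monomials on $\P^n\times\P^m$ restricted to $X_\phi$; I would carry this out by hand using that a monomial map of "size" $\norm\phi$ is, coordinatewise, a ratio of monomials of degree $\le \norm\phi$ in the source coordinates, which directly produces the required global sections of $\mathscr M_1^{\otimes\lceil\norm\phi\rceil}\otimes\mathscr M_2^{\otimes(-1)}$ up to denominators supported on a proper closed set. The final bookkeeping is then just collecting the $O(1)$ terms into $c(\phi)$ and checking the homogeneity $\Delta_X(\phi^a) = |a|^m \Delta_X(\phi)$ is consistent with $\norm{\phi^a} = |a|\,\norm\phi$, which it is.
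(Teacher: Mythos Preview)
Your overall framework --- pass to the graph closure $X_\phi\subset\P^n\times\P^m$, set $\mathscr M_1=p_1^*\mathscr O(1)$, $\mathscr M_2=p_2^*\mathscr O(1)$, and deduce the height inequality from the $\Q$-effectivity of a suitable combination of $\mathscr M_1$ and $\mathscr M_2$ --- is exactly the paper's. But two concrete details are reversed, and the detour through a ``crude then sharpen'' argument does not work as written.

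First, the sign. To obtain $h(\phi(x))\gg t\,h(x)-c(\phi)$ you need the class $\mathscr M_2 - t\,\mathscr M_1$ to be $\Q$-effective (so that $h_{\mathscr M_2}-t\,h_{\mathscr M_1}$ is bounded below off a proper closed set, via point~1 of Proposition~\ref{prop.heights}). You wrote $t\,\mathscr M_1-\mathscr M_2$, which yields the opposite inequality $h(\phi(x))\leq t\,h(x)+c$. Second, and as a direct consequence, the mixed intersection number that governs the Siu criterion is
\[
\deg\big(c_1(\mathscr M_1)\,c_1(\mathscr M_2)^{m-1}\cap[X_\phi]\big),
\]
not the one with the exponents swapped that you wrote down in step~(i). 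The paper bounds this number by a constant times $\norm{\phi}^{m-1}$ (via a B\'ezout-type inequality of \textsc{Philippon}), and then Siu's theorem gives directly that $\mathscr M_2-t\,\mathscr M_1$ is big, hence $\Q$-effective, for any $t<\dfrac{\Delta_X(\phi)}{m\cdot\deg(c_1(\mathscr M_1)c_1(\mathscr M_2)^{m-1}\cap[X_\phi])}$, which is $\gg \Delta_X(\phi)/\norm{\phi}^{m-1}$ with implied constant independent of~$\phi$.

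Your ``explicit monomial sections'' route, exhibiting sections of $\mathscr M_2^{\otimes\lceil\norm\phi\rceil}\otimes\mathscr M_1^{\otimes(-1)}$ by hand, only proves that $\norm\phi\,\mathscr M_2-\mathscr M_1$ is effective, i.e.\ the crude bound $h(\phi(x))\gg \norm\phi^{-1}h(x)-c$. There is no evident mechanism to upgrade this to the sharp factor $\Delta_X(\phi)/\norm\phi^{m-1}$ afterwards: that factor is precisely what Siu's inequality extracts from the top self-intersection $\mathscr M_2^m=\Delta_X(\phi)$, and you cannot recover it from the coarse pointwise estimate alone. So drop steps~(ii)--(iii) and the Nakai--Moishezon discussion, fix the sign, and apply Siu once to $\mathscr M_2-t\,\mathscr M_1$ with the correct mixed number; the proof is then three lines, as in the paper.
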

\begin{proof}
Soit $t\in\Q$ et soit $\mathscr L_t$ le $\Q$-fibré en droites\footnote{Je
note additivement la loi de groupes sur le groupe de Picard
déduite du produit tensoriel.}
$p_2^*\mathscr O(1)-t\,p_1^*\mathscr O(1)$ sur~$X_\phi$. 
Il suffit, pour démontrer ce lemme, d'établir que $\mathscr L_t$
est $\Q$-effectif pour une valeur de~$t$ 
de l'ordre de~$\Delta_X(\phi)/\norm{\phi}^{m-1}$.
D'après un théorème de \textsc{Siu}~\cite{siu1993},
c'est le cas dès que l'inégalité
\[ \deg(c_1(p_2^*\mathscr O(1))^m\cap [X_\phi])
> m \, t \, \deg(c_1(p_1^*\mathscr O(1))c_1(p_2^*\mathscr O(1))^{m-1}\cap [X_\phi])
\]
est satisfaite.
Grâce à une inégalité de type Bézout due à P.~\textsc{Philippon},
le degré d'intersection du membre de droite est
majoré par un multiple de~$\norm{\phi}^{m-1}$,
ce qui permet de conclure.
\end{proof}

\begin{lemm}\label{lemm.majoration}
Soit~$\Omega$ un voisinage de l'ensemble des matrices orthogonales\footnote{J'entends par là que leurs lignes forment une famille orthonormée.}
dans~$\mathrm M_{m,n}(\R)$. Il existe un nombre réel~$Q_0$
tel que pour tout nombre réel~$Q>Q_0$, l'assertion suivante soit satisfaite:
Si $x\in\gm^n(\bQ)$ appartient à un sous-groupe de codimension~$\geq m$,
il existe un entier~$q\in\{1,\dots,Q\}$ et un homomorphisme 
$\phi\colon \gm^n\ra\gm^m$ tel que $\phi\in q\Omega$, $\norm{\phi}\ll q$
\begin{equation} h(\phi(x)) \ll q Q^{-1/mn} h(x). \end{equation}
\end{lemm}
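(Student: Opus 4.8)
The plan is to produce, for a point $x$ lying in an algebraic subgroup $H$ of codimension $\geq m$, a homomorphism $\phi\colon\gm^n\to\gm^m$ that \emph{almost kills} $x$ (so that $h(\phi(x))$ is small relative to $h(x)$) while being close, after scaling, to an orthogonal matrix. The source of such a $\phi$ is a geometry-of-numbers argument: since $x\in H$, the lattice $\Lambda_x$ of multiplicative relations satisfied by the coordinates of $x$ has rank $\geq m$; I would work instead with a rank-$m$ sublattice, or rather with the \emph{orthogonal complement} of the relation module cut out by $H$. Concretely, let $\Lambda\subset\Z^n$ be a rank-$m$ submodule of the module of characters vanishing on $H$ (equivalently, a rank-$m$ sublattice of relations among the $x_i$); its orthogonal complement in $\R^n$, intersected with $\Z^n$, together with $\Lambda^\perp$, controls the directions in which $h(x)$ concentrates.

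First I would set up Minkowski's second theorem for the lattice $\Lambda$ (or its image under an orthonormalizing change of basis of $\R^n$), to extract a basis $v_1,\dots,v_m$ of $\Lambda$ with $\norm{v_i}$ comparable to the successive minima. The rows of $M_\phi$ will be built from rational approximations to an \emph{orthonormal} basis of the $\R$-span of $\Lambda$: by Dirichlet's simultaneous approximation theorem applied to the entries of such an orthonormal frame, there is an integer $q\in\{1,\dots,Q\}$ and an integer matrix $M_\phi$ with $\norm{M_\phi - qO}\ll q Q^{-1/(mn)}$ for some orthogonal $O$ whose rows span $\Lambda_\R$. This simultaneously gives $\phi\in q\Omega$ (for $Q$ large, depending on the fixed neighborhood $\Omega$) and $\norm\phi\ll q$.

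Next, the key estimate $h(\phi(x))\ll qQ^{-1/(mn)}h(x)$. Write $M_\phi = qO + E$ with $\norm E\ll qQ^{-1/(mn)}$. The point is that $O$, having rows in the span of the relation lattice, annihilates the "logarithmic coordinates" of $x$ up to the bounded-height contribution coming from archimedean and finitely many finite places; more precisely, using the standard inequality $h(\phi(x))\leq \norm{M_\phi'}\cdot(\text{something})$ where $\phi'$ is $\phi$ composed to land in the relation directions — one decomposes $\phi = (\text{component along }\Lambda) + (\text{component along }\Lambda^\perp)$. The $\Lambda$-component produces a bounded quantity (it is, up to bounded error, a relation, so its value at $x$ is a root of unity and has height $O(1)$), while the $\Lambda^\perp$-component has operator norm $\ll \norm E \ll qQ^{-1/(mn)}$ and contributes $\ll qQ^{-1/(mn)}h(x)$ by the elementary bound $h(\psi(x))\leq \norm{M_\psi}_{\mathrm{op}}\, h(x) + O(\norm{M_\psi})$ valid for any homomorphism $\psi$ (proved place by place from the definition of $h$ together with $\log\max_i|x_i^{a}|_v \leq |a|\log^+\max|x_i|_v$). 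Absorbing the $O(1)$ into the claimed inequality (enlarging $Q_0$ and using $h(x)\geq$ const, or stating the bound with an additive constant as the other lemmas do) finishes the argument.

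\textbf{Main obstacle.} The delicate point is the interplay between the size of a reduced basis of the relation lattice $\Lambda$ and the approximation quality: one must choose $\Lambda$ (among the many rank-$m$ sublattices of the full relation module of $H$) so that its successive minima are under control — otherwise the orthonormalization step degrades $\norm\phi$ or the error term. This is exactly where Minkowski's theorem and a careful choice of the rank-$m$ piece enter, and it is the technical heart of the lemma; the rest is the pigeonhole (Dirichlet box principle) giving the denominator $Q^{1/(mn)}$ in $n$-ish many coordinates, plus the routine place-by-place height estimate. Combined with Lemma \ref{lemm.minoration} — which goes the other way, bounding $h(\phi(x))$ from \emph{below} by $\Delta_X(\phi)\norm\phi^{-(m-1)}h(x)$ on a dense open set — one will be able to play the two inequalities against each other: choosing $\phi$ from Lemma \ref{lemm.majoration} for $x\in X^\oa\cap G^{[m]}$, the lower bound forces $\Delta_X(\phi)$ to be small, which (since $X^\oa$ rules out the degenerate configurations making $\Delta_X(\phi)$ vanish) can only happen for finitely many directions of $\phi$, and then boundedness of $h(x)$ follows. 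But that synthesis is the content of Theorem \ref{theo.habegger}, not of this lemma.
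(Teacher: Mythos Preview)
Your approach is essentially the paper's: take an integer matrix $\phi_0\in\mathrm M_{m,n}(\Z)$ of rank~$m$ with $\phi_0(x)=1$, Gram--Schmidt its rows to an orthogonal real matrix~$O$ with the same row span, Dirichlet-approximate by $q^{-1}\phi$ with $\phi$ integral and $1\leq q\leq Q$, and read off the height bound from $\phi_0(x)=1$.

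The step you flag as the ``main obstacle'' --- controlling the successive minima of the relation lattice via Minkowski's second theorem --- is unnecessary, and the paper's four-line proof omits it entirely. Gram--Schmidt produces an orthogonal matrix~$O$ with entries in $[-1,1]$ \emph{regardless} of how large~$\phi_0$ is; the shape of~$\Lambda$ never enters. The only genuine subtlety is the one you gloss over: the real matrix~$O$ does not define a homomorphism of tori, so one cannot literally evaluate your ``$\Lambda$-component'' at~$x$. The paper handles this by first replacing~$O$ by a nearby rational matrix $\theta_2^{-1}\phi_0\in\Omega$ (with $\theta_2\in\GL_m(\Q)$ close to the Gram--Schmidt change of basis) and then Dirichlet-approximating \emph{that}; since $\phi_0(x)=1$, one has $h\big((\theta_2^{-1}\phi_0)(x)^q\big)=0$ after clearing denominators, and the estimate $\bigl|h(\phi(x))-h\big((\theta_2^{-1}\phi_0)(x)^q\big)\bigr|\ll qQ^{-1/(mn)}h(x)$ follows from the elementary place-by-place bound you mention.
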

\begin{proof}
Par hypothèse, il existe un morphisme surjectif~$\phi_0\colon\gm^n\ra\gm^m$
tel que $\phi_0(x)=1$. Par le procédé d'orthogonalisation de Gram--Schmidt, 
on écrit $\phi_0=\theta_1\phi_1$, où $\theta_1\in\GL_m(\R)$
et où $\phi_2\in \mathrm M_{m,n}(\R)$ est orthogonale.
Soit $\theta_2$ une matrice à coefficients rationnels 
assez proche de~$\theta_1$, de sorte que $\theta_2^{-1}\phi_0$
appartienne à~$\Omega$. D'après le lemme de Dirichlet,
on peut alors approcher cette dernière matrice
par une matrice à coefficients rationnels de la forme $q^{-1}\phi$
appartenant à~$\Omega$
telle que $\norm{q^{-1}\phi-\theta_2^{-1}\phi_0}\ll Q^{-1/mn}$,
où $1\leq q\leq Q$ et $\phi\in\mathrm M_{m,n}(\Z)$.
Ces inégalités entraînent que 
\[ \abs{ h(\phi(x))-h(\theta_2^{-1}\phi_0(x)^q) }\ll qQ^{-1/mn} h(x), \]
d'où le lemme puisque $\phi_0(x)=1$.
\end{proof}

Soit $\phi\colon\gm^n\ra\gm^m$ un morphisme surjectif de groupes algébriques.
Si $X^\oa\neq\emptyset$, 
observons que l'application~$\phi|_X$ est génériquement quasi-finie.
En effet, dans le cas contraire, il passerait par tout point de~$X$
une composante~$Y$ de dimension~$>0$ de~$X\cap\ker(\phi)$ ;
puisque $\dim\ker(\phi)=\dim (X)$, $Y$ est alors une composante
atypique de~$X$, ce qui entraîne~$X^\oa=\emptyset$, d'où une contradiction.
On a donc $\Delta_X(\phi)>0$.

La proposition suivante est une minoration uniforme.
Si $r$ est un entier naturel tel que~$r<m$, on appelle
projection standard de~$\gm^m$ sur~$\gm^r$ un morphisme de groupes
algébriques défini par l'oubli de $m-r$ coordonnées.
Je renvoie à~\cite{habegger:2009d}, \S6--7, pour sa démonstration.

\begin{prop}\label{prop.degres}
Soit $Y$ une sous-variété fermée, irréductible de~$X$
telle que $Y\cap X^\oa\neq\emptyset$, soit $r=\dim (Y)$.
Il existe un nombre réel~$c>0$ et un voisinage~$\Omega$ de l'ensemble
des matrices orthogonales dans~$\mathrm M_{m,n}(\R)$
tels que pour tout~$\phi\in\Omega$, il existe une projection
standard $\pi\colon\gm^m\ra\gm^r$ telle que $\Delta_Y(\pi\phi)\geq c$.
\end{prop}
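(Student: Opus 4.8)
The plan is to reduce the uniform lower bound on $\Delta_Y(\pi\phi)$ to a genericity statement combined with a compactness argument on the space of orthogonal matrices. First I would fix a point $y_0 \in Y \cap X^\oa$. Since $X^\oa$ is open in $X$ (corollaire~\ref{coro.oa}) and $Y \cap X^\oa \neq \emptyset$, the intersection $Y \cap X^\oa$ is a dense open subset of~$Y$. For an \emph{orthogonal} matrix $\phi_0 \in \mathrm M_{m,n}(\R)$ — or rather a rational homomorphism $\gm^n \ra \gm^m$ whose matrix is close to orthogonal — the key observation is the one already made just before the statement: if $X^\oa \neq \emptyset$ then any surjective $\phi\colon\gm^n\ra\gm^m$ restricts to a generically quasi-finite map on~$X$, because otherwise a positive-dimensional component of $X \cap \ker\phi$ would pass through every point of~$X$, contradicting $X^\oa \neq \emptyset$ (here one uses $\dim\ker\phi = \dim X$). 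I would run the analogous argument on~$Y$: for a projection $\pi\colon\gm^m\ra\gm^r$, the composite $\pi\phi$ restricted to~$Y$ is generically finite onto its image precisely when $Y$ meets no positive-dimensional component of $Y \cap \ker(\pi\phi)$, which is a sub-torus of~$\gm^n$ of codimension~$r$; and through a point of $Y \cap X^\oa$ no such degenerate intersection can pass, since that would exhibit an atypical component of an intersection of~$X$ with a translate of a semi-abelian (here: algebraic) subgroup.

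The point of the proposition, however, is \emph{uniformity}: a single $c>0$ and a single neighbourhood~$\Omega$ that work for all $\phi\in\Omega$ simultaneously, with only the choice of the standard projection~$\pi$ allowed to depend on~$\phi$. To get this I would argue as follows. For a \emph{fixed} orthogonal matrix $\Theta$, I claim there is at least one standard projection~$\pi$ such that $\pi\Theta$ restricted to the Zariski tangent directions of~$Y$ at a generic point is injective; this is a linear-algebra fact about the $r$-plane spanned by the rows of $\Theta$ relevant to~$Y$, namely that its image under some coordinate projection $\R^m \to \R^r$ remains $r$-dimensional. Passing to the corresponding degree $\Delta_Y(\pi\Theta)$: this is, by the intersection-theoretic formula for $\Delta_Y$ recalled before Lemme~\ref{lemm.minoration} (applied with $Y$ in place of $X$ and $\pi\Theta$ in place of $\phi$), a continuous — indeed polynomial, homogeneous of degree~$r$ — function of the entries of the matrix $\pi\Theta$, at least on the locus where the graph closure behaves well. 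The degree $\Delta_Y(\pi\phi)$ being an integer that is positive whenever $\pi\phi|_Y$ is generically finite, and the set of orthogonal matrices being compact, a standard compactness/continuity argument yields a neighbourhood~$\Omega$ of the orthogonal matrices and a constant $c>0$ such that for every $\phi\in\Omega$ some standard projection~$\pi$ satisfies $\Delta_Y(\pi\phi)\geq c$: one covers the compact orthogonal locus by finitely many opens on each of which a fixed~$\pi$ works with a uniform bound, then shrinks.

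The main obstacle I expect is making the continuity of $\phi \mapsto \Delta_Y(\pi\phi)$ precise and uniform across the degenerations that occur as $\phi$ ranges over all of~$\Omega$ (not just the orthogonal locus). The degree $\Delta_Y(\pi\phi)$ jumps downward when $\pi\phi|_Y$ fails to be dominant or loses generic finiteness, and $\Omega$ is chosen large enough that one must ensure this does \emph{not} happen for at least one choice of~$\pi$; the finiteness of the collection of standard projections is what saves the day, but one needs to check that the \emph{same} finite stratification of the orthogonal locus controls a full neighbourhood, which is where the explicit geometry of Habegger's argument in~\cite{habegger:2009d}, \S6--7, does the real work. I would therefore, rather than reproduce that analysis, invoke it directly, as the statement already signals.
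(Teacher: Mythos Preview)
The paper does not prove this proposition: immediately after stating it, the text refers the reader to \S6--7 of~\cite{habegger:2009d} and moves on. Your proposal ultimately lands in the same place --- you sketch a strategy and then explicitly invoke that reference --- so in that sense you match the paper exactly.

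On the sketch itself, the overall shape (positivity of some $\Delta_Y(\pi\phi)$ from the hypothesis $Y\cap X^\oa\neq\emptyset$, then compactness of the Stiefel manifold plus continuity of the extended degree function) is the right one. One correction is worth flagging: the clean way to use a point $y_0\in Y\cap X^\oa$ is to argue first that $\phi|_Y$ itself is generically finite, because $\ker\phi$ has codimension exactly $m=\dim X$, so any positive-dimensional component of $X\cap y_0\ker\phi$ through~$y_0$ is automatically atypical; only afterwards do you choose a standard projection~$\pi$ that is generically finite on the $r$-dimensional image~$\overline{\phi(Y)}$. Running the $X^\oa$ argument directly with $\ker(\pi\phi)$, as you do, fails when $r<m$: that kernel has codimension only~$r$, so a component of $X\cap y_0\ker(\pi\phi)$ of dimension between~$1$ and~$m-r$ is typical and not excluded by~$y_0\in X^\oa$.

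The genuinely hard step, which you correctly isolate, is passing from integral surjective~$\phi$ (where the torus-geometry argument above applies and gives $\Delta_Y(\pi\phi)\geq 1$) to all real~$\phi$ in a neighbourhood of the orthogonal locus. This requires knowing that $\Delta_Y(\pi\,\cdot\,)$ extends to a homogeneous polynomial on~$M_{m,n}(\R)$ and that, for every orthogonal~$\Theta$, some~$\pi$ makes this polynomial positive at~$\Theta$; neither the graph-closure definition nor the $X^\oa$ argument hands you this directly, and it is precisely the content of Habegger's~\S6--7. Deferring to that reference is what the paper does, and is appropriate here.
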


On peut alors conclure la démonstration du théorème~\ref{theo.habegger}.
Soit $Q$ un entier naturel assez grand.
Appliquons la proposition~\ref{prop.degres} à $Y=X$;
soit $c>0$ tel que $\Delta_X(\phi)\geq c$ pour tout $\phi$
dans un voisinage~$\Omega$ de l'ensemble des matrices orthogonales
de taille~$m\times  n$.
Soit $x$ un point de~$X(\bQ)\cap G^{[m]}$.
Soit $q$ un entier et soit $\phi$ un morphisme de~$\gm^n$ dans~$\gm^m$
comme dans le lemme~\ref{lemm.majoration} ; on a
\[ h(\phi(x))\ll q Q^{-1/mn} h(x). \]
L'entier~$Q$ étant fixé, l'ensemble des couples~$(q,\phi)$ 
que peut fournir le lemme~\ref{lemm.majoration}
est fini ; le lemme~\ref{lemm.minoration}
implique donc l'existence d'un ouvert dense~$U$ de~$X$
et d'un nombre réel~$c(Q)$
tel que, si $x\in U$,
\[ h(\phi(x))\gg 
 \frac{\Delta_X(\phi)}{\norm{\phi}^{m-1}} h(x) - c(Q). \]
Comme $\phi\in q\Omega$,
\[ \frac{\Delta_X(\phi)}{\norm{\phi}^{m-1}}
=  \frac{q^m\Delta_X(q^{-1}\phi)}{\norm{\phi}^{m-1}}
\geq q c .  \]

Mises bout à bout, ces inégalités entraînent l'existence
d'un nombre  réel~$a>0$ et, pour tout entier~$Q$ assez grand,
d'un nombre réel~$c_Q$ et d'un ouvert dense~$U_Q$ de~$X$
tel que tout point~$x\in U_Q(\bQ)\cap G^{[m]}$
vérifie
\[   c h(x) - c_Q \leq a  Q^{-1/mn} h(x), \]
d'où $h(x)\leq c_Q/(c-a Q^{-1/mn})$ si $Q$ est assez grand.

Cela fournit la majoration de hauteurs souhaitée sur un ouvert
dense~$U$ de~$X^\oa$. En considérant les composantes irréductibles
de~$X\setminus U$, un argument de récurrence descendante
l'entraîne alors sur~$X^\oa$ tout entier.

\medskip

En fait, \textsc{Habegger} démontre un théorème plus général
où interviennent des épaississements de l'ensemble $G^{[\dim(X)]}$
au sens de la théorie des hauteurs.
Supposons donc fixé une hauteur canonique~$\hat h$ sur~$G$,
associée à un fibré ample d'une  compactification équivariante.
Pour toute partie~$\Sigma$ de~$G(\bQ)$,
notons $\mathscr C(\Sigma,\eps)$
l'ensemble\footnote{La lettre~$\mathscr C$ est l'initiale de cône.}
des points de~$G(\bQ)$ qui s'écrivent sous
la forme $xy$, où $x\in\Sigma$ et $\hat h(y)\leq \eps\max(1,\hat h(x))$.

Avec ces notations, on a alors le théorème:
\begin{theo}\label{theo.habegger-epsilon}
Soit $G$ un tore ou une variété abélienne et soit
$X$ une sous-variété (fermée, irréductible) de~$G$.
Il existe un nombre réel~$\eps>0$
tel que l'ensemble $X^\oa(\bQ)\cap \mathscr C(G^{[\dim(X)]},\eps)$
soit de hauteur bornée.
\end{theo}
La preuve de ce théorème s'établit de la même
façon que celle du théorème~\ref{theo.habegger},
avec quelques modifications consistant essentiellement à majorer,
lorsque $x\in\Sigma$ et $\hat h(y)\leq\eps(\max(1,\hat h(x)))$,
la hauteur de~$x$ en fonction de celle de~$xy$.

\subsection{Finitude}

Nous commençons par traiter le cas des tores.

\begin{prop}[\cite{bombieri-masser-zannier:2008}, Lemme~8.1]
\label{prop.fini}
Soit $X$ une sous-variété fermée, irréductible,
stricte d'un tore~$G=\gm^n$. 
Alors, pour tout nombre réel~$B$,
l'ensemble des points~$x\in X^\ta(\bar\Q)\cap G^{[1+\dim(X)]}$
tels que $h(x)\leq B$ est fini.
\end{prop}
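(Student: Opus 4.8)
The plan is to combine the Northcott property (Proposition~\ref{prop.heights}, item~3) with a uniform bound on the degree of the field of definition of the points in question, the latter coming from the relative Lehmer-type estimate of \textsc{Delsinne} (Theorem~\ref{theo.delsinne}) in the torus case. More precisely, since we already assume $h(x)\leq B$, by Northcott it suffices to show that the degree $[\Q(x):\Q]$ of any point $x\in X^{\ta}(\bQ)\cap G^{[1+\dim(X)]}$ with $h(x)\leq B$ is bounded independently of~$x$ (the bound being allowed to depend on~$X$ and~$B$).

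First I would fix such a point $x$. Being in $G^{[1+\dim(X)]}=\gm^n{}^{[1+d]}$ with $d=\dim(X)$ means that $x$ lies in a subgroup $H$ of $\gm^n$ of codimension $\geq d+1$; replacing $H$ by the subgroup actually generated by~$x$ we may take $H$ to be the smallest algebraic subgroup containing~$x$, so that $x$ is not contained in any strict algebraic subgroup of~$H^{\circ}$ and we may work inside $H^{\circ}$ (a subtorus of dimension $\leq n-d-1$). The idea is then to parametrize $H^{\circ}\simeq\gm^{e}$ with $e=\dim(H^{\circ})$, so that $x$ becomes a point of $\gm^{e}$ not lying in any proper algebraic subgroup, and apply Theorem~\ref{theo.delsinne}: for every $\eps>0$ there is $c>0$ (depending on $e$, hence ultimately only on $n$, since $e<n$) with
\[ h(x) \geq c\,[K_{\tors}(x):K_{\tors}]^{-1/e-\eps}, \]
where $K=\Q$ here. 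Together with $h(x)\leq B$ this yields $[K_{\tors}(x):K_{\tors}]\leq (c/B)^{-1/(1/e+\eps)}$, a bound depending only on $n$ and~$B$. Since $K_{\tors}/\Q$ is an abelian (hence ``small'') extension, one then controls $[\Q(x):\Q]$ in terms of $[K_{\tors}(x):K_{\tors}]$ and the degree of the cyclotomic-type data entering the parametrization of~$H^{\circ}$ — but here the $\ta$ hypothesis is what saves us, as I explain next.

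The main obstacle, and the place where the hypothesis $x\in X^{\ta}$ rather than merely $x\in X(\bQ)$ is essential, is that the subtorus $H$ (equivalently, the integer matrix whose rows are the relations satisfied by $x$) is \emph{not} bounded: there are infinitely many subtori of codimension $\geq d+1$, so one cannot naively say that $x$ ranges over finitely many $\gm^{e}$'s. The way around this is that the component of $X\cap H$ through~$x$ is an \emph{atypical} component (its dimension is $0$, while $X\subsetneq\gm^n$ would predict dimension $\dim(X)+\dim(H)-n<0$ if $x$ were isolated — more precisely $x$ lies in an intersection whose expected dimension is negative), so that requiring $x\in X^{\ta}$ forbids positive-dimensional atypical components but still allows the isolated point~$x$; the point is that the ``good'' part of the argument only needs $x$ to be an \emph{isolated} point of $X\cap H$. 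Thus I would argue: either the component $Y$ of $X\cap H$ through~$x$ has positive dimension — impossible since $x\in X^{\ta}$ and such a $Y$ would be a positive-dimensional atypical component — or $\{x\}$ is a connected component of $X\cap H^{\circ}$, in which case a projection/degree argument (in the spirit of the map $\phi_X\colon X\to\phi(X)$ used in the proof of Corollary~\ref{coro.oa}, applied with a well-chosen quotient by part of $H$) bounds the relevant ``height'' of the subtorus~$H$, and hence the degree of the needed roots of unity, by a quantity depending only on $\deg(X)$ and~$B$. Feeding this back into the estimate coming from Theorem~\ref{theo.delsinne} gives a uniform bound on $[\Q(x):\Q]$, and Northcott (Proposition~\ref{prop.heights}, item~3) then finishes the proof.
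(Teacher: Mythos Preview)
Your overall strategy --- bound $[\Q(x):\Q]$ uniformly and then invoke Northcott, using the relative Lehmer estimate (Theorem~\ref{theo.delsinne}) as the engine --- is exactly the right one. But there is a genuine gap in the passage from a bound on $[K_{\tors}(x):K_{\tors}]$ to a bound on $[\Q(x):\Q]$. For a torus, $K_{\tors}$ is the cyclotomic closure of~$K$, hence infinite over~$\Q$; bounding the relative degree over $K_{\tors}$ says nothing about $[\Q(x):\Q]$ unless you independently control the ``cyclotomic part'' of~$x$. You notice this, but the sentence that follows is only a wish: nothing in your outline bounds the order~$L$ of the roots of unity involved, and in fact the minimal subgroup~$H$ containing~$x$ can have arbitrarily large degree even under $h(x)\le B$ (let some coordinates of~$x$ be roots of unity of high order). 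So no ``projection/degree argument'' can simply bound the height of~$H$.

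The paper's argument is structurally different. One writes the coordinates of~$x$ against a quasi-orthogonal basis $\eta_1,\dots,\eta_r$ of the group they generate (a theorem of Schlickewei), recording the torsion part (roots of unity of total order~$L$) and the exponent vectors $a_j$ (set $A=\prod_j\|a_j\|$); the bound $h(x)\le B$ then gives $\|a_j\|\,h(\eta_j)\ll 1$. Geometry of numbers (Bombieri--Vaaler) produces, not the minimal~$H$, but a \emph{new} subgroup~$T_b$ of codimension exactly $m=\dim X$ containing~$x$, with $\deg T_b\ll (LA)^{m/(m+1)}$. It is with this~$T_b$ that the hypothesis $x\in X^{\ta}$ is used: since $\dim X+\dim T_b-\dim G=0$, the component of $X\cap T_b$ through~$x$ is either typical (a point) or positive-dimensional atypical (excluded); B\'ezout then yields $[\Q(x):\Q]\ll (LA)^{m/(m+1)}$. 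Finally Delsinne's bound is applied not to~$x$ but to the tuple $(\eta_1,\dots,\eta_r)$ over $\Q(\zeta)$; combined with $\|a_j\|\,h(\eta_j)\ll 1$ and $[\Q(\zeta):\Q]\gg L^{1-\eps}$, this forces $LA\ll 1$, closing the loop. The two ingredients your sketch is missing are therefore (i)~the construction, via a reduced basis and geometry of numbers, of a subgroup of codimension exactly~$m$ and controlled degree, and (ii)~the feedback in which Lehmer, applied to the~$\eta_j$, simultaneously bounds the torsion order~$L$ and the exponent size~$A$.
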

\begin{rema}
Pour obtenir un tel énoncé de finitude, il est nécessaire
de considérer l'ensemble~$X^\ta$. Considérons en effet
une composante irréductible~$Y$ 
d'une intersection~$X\cap H$,
où $H$ est un sous-groupe algébrique strict de~$G$;
supposons qu'elle soit atypique et de dimension positive.
En considérant l'image réciproque des points de torsion
par une projection de~$H$ sur un tore~$\gm^{\dim(Y)}$
dont la restriction à~$Y$ est génériquement finie, on  
voit que $Y(\bQ)\cap H^{[\dim(Y)]}$ contient un ensemble
de hauteur bornée qui est dense dans~$Y$.
Par suite, si $B$ est assez grand, l'adhérence de l'ensemble des points
de~$X(\bQ)\cap G^{[1+\dim(X)]}$ dont la hauteur est~$\leq B$
contient~$Y$.
\end{rema}

\begin{proof}
Pour simplifier les notations, on note $m=\dim(X)$. 
Notons aussi $K$ un corps de définition de~$X$.
On définit la norme $\norm{\cdot}$ d'un vecteur 
comme le maximum de ses coordonnées.

Soit $x=(\xi_1,\dots,\xi_n)\in X^\ta(\bar\Q)\cap G^{[1+m]}$.
Soit $r$ la dimension du plus petit sous-groupe algébrique~$T_x$
qui contient~$x$: c'est le rang du sous-groupe multiplicatif~$\Gamma_x$
de~$\bQ^*$ engendré par $(\xi_1,\dots,\xi_n)$.
Par hypothèse, $\codim(T_x)\geq m+1$, c'est-à-dire $r+m+1\leq n$.

Un théorème de \textsc{Schlickewei}~\cite{schlickewei:1997}
fournit des éléments $\eta_1,\dots,\eta_r\in\Q(x)$
vérifiant les conditions suivantes:
\begin{itemize}
\item Il existe
des entiers relatifs $a_{ij}$ (pour $1\leq i\leq n$ et $1\leq j\leq r$)
et des racines de l'unité $\zeta_1,\dots,\zeta_n$ dans~$\Q(x)$
tels que $\xi_i=\zeta_i \eta_1^{a_{i1}}\dots \eta_r^{a_{ir}}$
pour tout $i$ tel que $1\leq i\leq n$;
\item Pour toute famille $(e_1,\dots,e_r)\in\Z^r$, 
$h(\eta_1^{e_1}\dots\eta_r^{e_r})\geq c(r) \sum_{j=1}^r \abs {e_j}h(\eta_j)$.
\end{itemize}
La preuve de cet énoncé consiste à introduire 
l'espace vectoriel~$\Gamma_x\otimes\R$
(isomorphe à~$\R^r$) muni de la jauge~$\omega$ donnée par la hauteur 
(on étend~$h$ à $\Gamma_x\otimes\Q$ par linéarité, puis à~$\Gamma_x\otimes\R$
par continuité). On remplace alors cette jauge par une autre,
quadratique, 
correspondant à l'ellipsoïde de John de la boule $\{\omega(\xi)\leq 1\}$.
La famille~$(\eta_1,\dots,\eta_r)$ provient d'une base LLL réduite
du réseau $\Gamma_x$ modulo torsion.


On retient en particulier les inégalités
\[
 h(\xi_i) \gg \sum_{j=1}^r \abs{a_{ij}} h(\eta_j)
, \qquad \text{pour $1\leq i\leq n$.} 
\]
Puisqu'on ne considère que des points~$x$ de hauteur au plus~$B$
et que $h$ est positive ou nulle, il vient en particulier
\begin{equation}
\label{eq.ajh(etaj)}
 \norm{a_j} h(\eta_j) \ll 1
, \qquad \text{pour $1\leq j\leq r$,} 
\end{equation}
où l'on a posé
$a_j=(a_{1j},\dots,a_{nj})$.

Soit $L$ le ppcm des ordres des~$\zeta_i$
et soit $\zeta\in\Q(x)$ une racine de l'unité d'ordre~$L$;
pour tout~$i$, soit $\ell_i$ un entier tel que $0\leq\ell_i<L$
et $\zeta_i=\zeta^{\ell_i}$. Considérons alors les $r+1$
formes linéaires indépendantes sur~$\Z^{n+1}$:
\[  \phi_0(\mathbf x)= -L x_0 + \sum_{i=1}^n \ell_i x_i,
\quad\text{et}\quad \phi_j(\mathbf x)=\sum_{i=1}^n a_{i,j} x_i 
\qquad \text{(pour $1\leq j\leq r$)}. \]
Un argument de géométrie des nombres (par exemple, le lemme de Siegel
de \textsc{Bombieri--Vaaler}) garantit l'existence
d'éléments $b_1,\dots,b_{n-r}\in\Z^{n+1}$, linéairement indépendants,
satisfaisant les relations $\phi_j(b_k)=0$ pour tout~$j$ et tout~$k$,
et de tailles contrôlées:
\[ \prod_{k=1}^{n-r} \norm{b_k} \ll L \prod_{j=1}^r \norm{a_j}. \]
Pour simplifier les notations, posons $A=\prod_{j=1}^r \norm{a_j}$.
On supposera aussi, ce qui est loisible,
que $\norm{b_1}\leq\dots\leq \norm{b_{n-r}}$.

Pour $1\leq k\leq n-r$, notons $(b_{k0},\dots,b_{kn})$
les coordonnées de~$b_k$.
Les caractères $\mathbf x \mapsto \prod_{i=1}^n x_i^{b_{ki}}$ sur~$\gm^n$,
pour $1\leq k\leq m$,
sont indépendants et définissent un sous-groupe algébrique~$T_b$
de codimension~$m$ dans~$\gm^n$. 
D'après le théorème de Bézout,
son degré (calculé dans l'espace projectif $\P^n$) est majoré
par
\[ \deg(T_b) \ll \prod_{k=1}^m \norm{b_k} \ll (LA)^{m/( n-r)}
\ll (LA)^{m/(m+1)} . \]
Par construction, on a $\prod_{i=1}^n \xi_i^{b_{ki}}=1$
pour tout $k\in\{1,\dots,n-r\}$; autrement dit, $x\in T_b$.

Considérons maintenant la composante $Y_x$ passant par~$x$
de l'intersection $X\cap T_b$. Comme $x\in X^{\ta}$,
cette composante est de dimension typique ou de dimension nulle.
Puisque $\dim(X)+\dim(T_b)-\dim(G)=0$, on a nécessairement
$Y_x=\{x\}$. Une nouvelle application du théorème de Bézout
dans l'espace projectif~$\P^n$
entraîne que le nombre de composantes ponctuelles de $X\cap T_b$
est au plus égal à $\deg(X)\deg(T_b)\ll (LA)^{m/(m+1)}$.
Comme tous les conjugués de~$x$ sur le corps de définition~$K$
de~$X$ appartiennent à $X\cap T_b$, on en déduit l'inégalité
\begin{equation}
\label{eq.degre(x)}
[ \Q(x):\Q ] \ll (LA)^{m/(m+1)}.
\end{equation}

C'est à ce stade qu'entrent en jeu les minorations effectives
concernant le problème de Lehmer dans les tores:
elles permettent de déduire de cette majoration du degré de~$x$, 
donc du point~$\eta$, une minoration  de la hauteur des~$\eta_j$.
Les éléments $(\eta_1,\dots,\eta_r)$ de~$\Q(x)^*$ sont multiplicativement
indépendants et $\zeta$ est une racine de l'unité contenue dans~$\Q(x)$.
Le théorème~\ref{theo.delsinne}
entraîne donc, pour tout $\eps>0$, une inégalité
\[ h(\eta_1)\dots h(\eta_r) \gg_\eps [ \Q(\eta):\Q(\zeta) ]^{-1-\eps}. \]
Puisque $\zeta$ est d'ordre~$L$, $[\Q(\zeta):\Q]=\phi(L)\gg_\eps L^{1-\eps}$.
On en déduit alors la minoration
\begin{equation}
  A \, h(\eta_1)\dots h(\eta_r) \gg_\eps  (LA)^{(1-m\eps)/(m+1)} .
\end{equation}

Si nous comparons cette inégalité à l'inégalité~\eqref{eq.ajh(etaj)},
on obtient
\[ LA \ll_\eps 1, \]
ce qui signifie que le produit~$LA$ est borné indépendamment de~$x$.
L'inégalité~\eqref{eq.degre(x)} entraîne alors que
lorsque $x$ parcourt l'ensemble qui nous intéresse,
$[\Q(x):\Q]$ est borné.
La proposition découle alors du théorème de \textsc{Northcott}. 
\end{proof}

Signalons que dans cette approche, la pleine force 
du théorème~\ref{theo.delsinne}
de \textsc{Delsinne} n'est pas vraiment nécessaire
mais simplifie grandement les démonstrations
en court-circuitant, par exemple,
les arguments  galoisiens qui avaient permis 
à \textsc{Bombieri}, \textsc{Masser} et \textsc{Zannier}
de prouver la proposition~\ref{prop.fini}
(voir~\cite{bombieri-masser-zannier:1999}, \S4,
ainsi que~\cite{bombieri-masser-zannier:2006}, Lemme~1 p.~2249)
lorsque $X$ est une courbe.
Dans~\cite{bombieri-masser-zannier:2008}, ils utilisent
la borne un peu plus faible de~\cite{amoroso-david2004}.

Compte tenu du théorème~\ref{theo.habegger} d'\textsc{Habegger}, on a
donc le théorème suivant:
\begin{coro}[\cite{habegger:2009b}, Corollaire~1.4]
Soit $X$ une sous-variété (fermée, irréductible) du tore $G=\gm^n$.
L'ensemble $X^\oa(\bQ)\cap G^{[1+\dim (X)]}$ est fini.
\end{coro}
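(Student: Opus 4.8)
Le plan est de déduire l'énoncé en combinant le théorème~\ref{theo.habegger} et la proposition~\ref{prop.fini}. Commençons par écarter le cas $X=\gm^n$: on a alors $\dim(X)=n$, il n'existe aucun sous-groupe algébrique de codimension $>n$, de sorte que $G^{[1+\dim(X)]}=\emptyset$ et il n'y a rien à démontrer. Supposons désormais $X\subsetneq\gm^n$, ce qui rend la proposition~\ref{prop.fini} applicable à~$X$. Comme $1+\dim(X)\geq\dim(X)$, on dispose de l'inclusion évidente $G^{[1+\dim(X)]}\subseteq G^{[\dim(X)]}$, et le théorème~\ref{theo.habegger} fournit alors un nombre réel~$B$ tel que $h(x)\leq B$ pour tout point $x$ de $\Sigma:=X^\oa(\bQ)\cap G^{[1+\dim(X)]}$.

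Il reste à vérifier que $\Sigma$ est du type traité par la proposition~\ref{prop.fini}, c'est-à-dire que $X^\oa\subseteq X^\ta$; c'est le seul point qui demande un argument, et c'est là qu'intervient la compatibilité des lieux exceptionnels $X^\ta$ et $X^\oa$. Soit $Y$ une composante irréductible atypique, de dimension strictement positive, d'une intersection $X\cap H$ où $H$ est un sous-groupe algébrique de~$G$; il faut montrer que $Y\subseteq X\setminus X^\oa$. Écrivons $H$ comme réunion finie de translatés de sa composante neutre $H^\circ$, qui est un sous-tore de~$G$; l'irréductibilité de~$Y$ entraîne qu'elle est contenue dans l'un de ces translatés, d'où $\dim(YH^\circ/H^\circ)=0$. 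D'autre part, l'inégalité d'atypicité $\dim(Y)>\dim(X)+\dim(H^\circ)-\dim(G)$ donne $\codim(H^\circ)>\codim_X(Y)$, donc $\dim(G/H^\circ)-\codim_X(Y)>0$; jointe à $\dim(Y)\geq 1$, elle entraîne
\[ \dim(YH^\circ/H^\circ)=0<\min(\dim(Y),\,\dim(G/H^\circ)-\codim_X(Y)). \]
D'après le critère de la proposition qui suit le corollaire~\ref{coro.oa} (appliqué à la sous-variété semi-abélienne~$H^\circ$), on a donc $Y\subseteq X\setminus X^\oa$. Par conséquent $X\setminus X^\ta\subseteq X\setminus X^\oa$, c'est-à-dire $X^\oa\subseteq X^\ta$, et en particulier $\Sigma\subseteq X^\ta(\bQ)\cap G^{[1+\dim(X)]}$.

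Il ne reste plus qu'à appliquer la proposition~\ref{prop.fini} à la sous-variété stricte $X\subsetneq\gm^n$ avec la borne~$B$: l'ensemble des $x\in X^\ta(\bQ)\cap G^{[1+\dim(X)]}$ tels que $h(x)\leq B$ est fini, donc $\Sigma$ l'est aussi, ce qu'il fallait démontrer. Il n'y a pas d'obstacle véritable au-delà de ces vérifications: toute la substance réside dans le théorème~\ref{theo.habegger} (la borne de hauteur, établie plus haut via le théorème de Siu et les lemmes~\ref{lemm.minoration} et~\ref{lemm.majoration}) et dans la proposition~\ref{prop.fini} (la finitude de la partie de hauteur bornée, qui repose sur le théorème de Schlickewei, le théorème de Bézout, le lemme de Siegel et l'inégalité de Lehmer relative du théorème~\ref{theo.delsinne}). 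Le seul point qu'il faut réellement contrôler est que ces deux ingrédients portent sur des lieux exceptionnels compatibles, à savoir l'inclusion $X^\oa\subseteq X^\ta$ établie ci-dessus.
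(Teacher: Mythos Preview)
Your proof is correct and follows exactly the approach the paper intends: the corollary is stated immediately after Proposition~\ref{prop.fini} with the sole justification \og compte tenu du théorème~\ref{theo.habegger}\fg, so the paper is combining the bounded-height result on $X^\oa$ with the finiteness result on $X^\ta$ precisely as you do. Your verification of the inclusion $X^\oa\subseteq X^\ta$ via the characterisation of $X\setminus X^\oa$ given after Corollary~\ref{coro.oa} is a detail the paper leaves implicit, and your argument for it is clean and correct.
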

Cela démontre en particulier le théorème~\ref{theo.nd}
dans le cas des tores:
si $X$ n'est pas géométriquement dégénérée, $X\setminus X^\oa$ est une
partie fermée stricte de~$X$  et
$X(\bQ)\cap G^{[1+\dim(X)]}$ n'est pas dense dans~$X$
pour la topologie de Zariski.

\medskip

L'analogue abélien de la proposition~\ref{prop.fini}
a été démontré par~\textsc{Rémond} dans~\cite{remond:2005b}
(théorème~2.1)
en supposant la conjecture~\ref{conj.lehmer-relatif} vérifiée.
Compte tenu du théorème~\ref{theo.delsinne} de \textsc{Carrizosa},
on a donc:
\begin{prop}\label{prop.fini-abelien}
 Soit $G$ une variété abélienne à multiplications complexes 
(définie sur~$\bQ$), soit $X$ une sous-variété fermée, irréductible,
de~$A$ définie sur~$\bQ$.
Alors, pour tout nombre réel~$B$,
l'ensemble des points~$x\in X^\ta(\bQ)\cap G^{[\dim(X)]}$ 
tels que $h(x)\leq B$ est fini.
En particulier, l'ensemble $X^\oa(\bQ)\cap G^{[1+\dim(X)]}$ est fini.
\end{prop}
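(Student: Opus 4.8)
\medskip

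The plan is to transcribe, to a CM abelian variety, the argument used above for Proposition~\ref{prop.fini} --- this is exactly \textsc{Rémond}'s approach in~\cite{remond:2005b}, the only ingredient that was conjectural there, namely the relative Lehmer inequality in~$G$, being now available as Theorem~\ref{theo.delsinne} in \textsc{Carrizosa}'s case. First I would fix a symmetric ample line bundle on~$G$, the associated Néron--Tate height~$\hat h$, a projective embedding of~$G$, and a field of definition~$K$ of~$G$ and~$X$; since $G$ is projective no compactification is needed. Writing $m=\dim(X)$, it suffices, because we only consider points of bounded height, to bound $[\Q(x):\Q]$ uniformly for $x\in X^\ta(\bQ)\cap G^{[1+m]}$ with $\hat h(x)\leq B$; the theorem of \textsc{Northcott} (Proposition~\ref{prop.heights}(3)) then gives finiteness.

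Given such an~$x$, let $T_x$ be the smallest algebraic subgroup of~$G$ containing it, $B=T_x^\circ$ its neutral component --- an abelian subvariety of~$G$ --- and $r=\dim(B)$; the hypothesis on~$x$ gives $\codim_G(B)\geq m+1$. The first step is the abelian counterpart of the \textsc{Schlickewei}/LLL normalisation of the torus case: using the module structure of $G(\bQ)\otimes\Q$ over the CM algebra $\End(G)\otimes\Q$, one produces an \og almost orthogonal\fg\ system $\eta_1,\dots,\eta_r$ generating~$B$ up to isogeny and torsion, for which the Néron--Tate form --- here playing the role of the Mahler measure in \textsc{Lehmer}'s problem --- is nearly diagonal; expressing~$x$ through the~$\eta_j$ and a torsion point of order~$L$ by integer vectors~$a_j$, the bound $\hat h(x)\leq B$ then forces $\norm{a_j}\,\hat h(\eta_j)\ll 1$, the analogue of~\eqref{eq.ajh(etaj)}.

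From there I would proceed exactly as in Proposition~\ref{prop.fini}. A geometry of numbers argument (the \textsc{Bombieri--Vaaler} form of Siegel's lemma) yields an algebraic subgroup~$H$ of~$G$ of codimension~$m$ with $x\in H$ and, by Bézout in the fixed embedding, $\deg(H)\ll(LA)^{m/(m+1)}$, where $A=\prod_j\norm{a_j}$, the bound $\codim_G(B)\geq m+1$ providing enough independent relations. Since $x\in X^\ta$ and $\dim(X)+\dim(H)-\dim(G)=0$, the component of $X\cap H$ through~$x$ is the single point~$\{x\}$; a second use of Bézout bounds the number of isolated points of $X\cap H$, hence of $K$-conjugates of~$x$, giving $[\Q(x):\Q]\ll(LA)^{m/(m+1)}$. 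Feeding this into the CM relative Lehmer inequality of Theorem~\ref{theo.delsinne} applied to $(\eta_1,\dots,\eta_r)$, used as in the torus case to lower-bound $\hat h(\eta_1)\cdots\hat h(\eta_r)$, together with $[\Q(\zeta):\Q]=\phi(L)\gg_\eps L^{1-\eps}$, yields $A\,\hat h(\eta_1)\cdots\hat h(\eta_r)\gg_\eps(LA)^{(1-m\eps)/(m+1)}$; comparison with $\prod_j\norm{a_j}\,\hat h(\eta_j)\ll 1$ forces $LA\ll_\eps 1$, so $[\Q(x):\Q]$ is bounded and Northcott concludes the first assertion. The \og in particular\fg\ then follows exactly as in the torus corollary to Proposition~\ref{prop.fini}: one intersects with $G^{[1+m]}\subseteq G^{[m]}$, invokes Theorem~\ref{theo.habegger} of \textsc{Habegger} to obtain a height bound~$B$ on $X^\oa(\bQ)\cap G^{[m]}$, and uses $X^\oa\subseteq X^\ta$.

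The hard part is the first step. A torus carries coordinates, which make the rank-$r$ lattice attached to~$x$, its height gauge, and its LLL reduction entirely explicit; an abelian variety carries none of this, and it is precisely complex multiplication --- the extra endomorphisms --- that restores enough structure on $G(\bQ)\otimes\Q$ to run an effective reduction theory for the Néron--Tate form while keeping the order~$L$ of the torsion part and the size~$A$ of the free part under simultaneous control. This is the technical core of~\cite{remond:2005b}, and it is also the reason why --- here, as in Theorem~\ref{theo.delsinne} --- one must assume $G$ to be a CM abelian variety rather than an arbitrary one.
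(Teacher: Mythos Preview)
Your proposal diverges from the paper's account and, more importantly, rests on steps that do not survive the passage from~$\gm^n$ to an abelian variety. The paper is explicit that Rémond's argument, while \og du même esprit\fg\ as Proposition~\ref{prop.fini}, \og en diffère par plusieurs points\fg: geometry of numbers together with Carrizosa's theorem produce a \emph{finite} family of abelian subvarieties of codimension~$\geq\dim(X)$ such that every point~$x$ under consideration lies, modulo a torsion point, in one of them; the finiteness is then drawn from Raynaud's theorem on Manin--Mumford, not from a degree bound followed by Northcott.

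Your direct transcription breaks at two concrete places. First, the Schlickewei/LLL step has no naive abelian analogue: a point of~$G$ has no coordinates, and writing~$x$ as \og integer vectors~$a_j$\fg\ applied to points~$\eta_j$ plus torsion is not a well-posed statement; the CM hypothesis gives an $\End(G)$-module structure on~$G(\bQ)\otimes\Q$, but extracting from it an inequality of the shape $\norm{a_j}\,\hat h(\eta_j)\ll 1$ with honest integer vectors~$a_j$ is a genuinely different construction, not a transcription of the torus argument. Second, the line $[\Q(\zeta):\Q]=\phi(L)\gg_\eps L^{1-\eps}$ is the cyclotomic identity for roots of unity and is simply false for a torsion point~$\zeta$ of order~$L$ on an abelian variety --- the degree of its field of definition is governed by the Galois image on the Tate module, not by Euler's function. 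It is precisely because the torsion contribution cannot be absorbed into a degree estimate of this kind that Rémond's endgame goes through Manin--Mumford instead. (Two incidental remarks: you argue for $G^{[1+m]}$ whereas the proposition is stated for~$G^{[m]}$, and you reuse the letter~$B$ for both the height bound and the connected component of~$T_x$.)
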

La démonstration, bien que du même esprit que celle 
de la proposition~\ref{prop.fini}, en diffère par plusieurs
points. Des arguments de géométrie des nombres 
et le théorème de \textsc{Carrizosa}
permettent de prouver l'existence d'une famille finie
de sous-variétés abéliennes de codimension~$\geq\dim(X)$
telles que tout point~$x$ comme dans la proposition appartienne,
modulo un point de torsion, à l'une d'entre elles. \textsc{Rémond}
conclut alors à l'aide du théorème de \textsc{Raynaud}
sur la conjecture de Manin--Mumford.

Signalons aussi que \textsc{Rémond} démontre dans
la même proposition un théorème de finitude
analogue pour une intersection $X^{\oa,[s]}\cap G^{[t]}$
valable pour toute variété abélienne,
où $s\leq t$ sont des entiers convenables (mais on a toujours $t>s$
lorsqu'on doit se contenter de résultats
partiels en direction de la conjecture~\ref{conj.lehmer-relatif}).

Signalons encore que \textsc{Carrizosa} a raffiné le cas
abélien du théorème~\ref{theo.delsinne} 
(voir le théorème~1.15 de~\cite{carrizosa:2009});
cela lui permet de simplifier 
la preuve de la proposition~\ref{prop.fini-abelien}.

Enfin, et de manière analogue au cas des tores,
la version abélienne du théorème~\ref{theo.nd} 
est une conséquence
directe du théorème~\ref{theo.habegger} et de cette proposition.

\subsection{Quand l'exception est la règle}

Les théorèmes précédents ne donnent un théorème de finitude
que dans l'ouvert~$X^\oa$. En particulier, ils sont
vides quand $X^\oa$ l'est, c'est-à-dire quand $X$ est géométriquement
dégénérée.
La démonstration du théorème~\ref{theo.maurin'} requiert donc
des arguments supplémentaires.

Nous décrirons au paragraphe suivant l'approche de \textsc{Rémond}
vers la conjecture~\ref{conj.pink-ml} et la façon dont 
elle permit à \textsc{Maurin} de prouver ce théorème.
Expliquons pour l'instant 
comment \textsc{Bombieri}, \textsc{Habegger}, 
\textsc{Masser} et \textsc{Zannier} utilisent
le théorème~\ref{theo.habegger} pour y parvenir.

Nous considérons donc une courbe (irréductible, fermée)~$X$ du tore~$G=\gm^n$,
définie sur un corps de nombres~$K$, qui n'est pas
contenue dans un sous-groupe algébrique strict de~$\gm^n$.
Il s'agit de démontrer que $X(\bQ)\cap  G^{[2]}$ est fini.
On raisonne par récurrence sur~$n$; le cas $n<2$ est trivial
et le cas $n=2$ résulte de la finitude
de l'ensemble des points de torsion situés sur~$X$.

Lorsque $X$ n'est pas dégénérée, c'est-à-dire n'est contenue dans 
aucun translaté de sous-groupe algébrique strict de~$\gm^n$,
le résultat est couvert par le théorème~\ref{theo.nd}, 
prouvé dans ce cas par~\cite{bombieri-masser-zannier:1999}.
Supposons donc que $X$ soit contenue dans un translaté
d'un sous-groupe algébrique strict.
Au prix d'un changement de coordonnées sur~$\gm^n$, 
il existe un entier~$m\in\{1,\dots,n-1\}$,
une courbe non dégénérée~$C\subset\gm^m$ et un point $P_0\in\gm^{n-m}$
tels que  $X=C\times\{P_0\}$. On peut même supposer, et on le fait,
que le stabilisateur de~$C$ dans~$\gm^m$ est réduit à~$\{1\}$.

Puisque $X$ n'est contenue dans aucun sous-groupe algébrique 
strict de~$\gm^n$,
il en est de même du point~$P_0$ dans~$\gm^{n-m}$. En outre,
comme on s'intéresse à l'intersection de~$X$ avec des sous-groupes
de codimension~$2$ de~$\gm^n$, on peut supposer que $m\geq 2$,
cette intersection étant vide sinon.
Soit $\phi\colon X\times X\ra\gm^n$ le morphisme tel que 
$\phi(P,Q)=P\cdot Q^{-1}$; soit $\psi\colon C\times C\ra\gm^m$
le morphisme analogue. 
Ainsi, on a $\phi((P,P_0),(Q,Q_0))=(\psi(P,Q),1,\dots,1)$
pour tout couple~$(P,Q)\in C\times C$.
Soit $S$ l'adhérence de $\phi(C\times C)$ pour la topologie de Zariski 
dans~$\gm^m$, c'est une surface contenue dans~$\gm^m\times\{(1,\dots,1)\}$
et elle n'est pas dégénérée dans ce sous-groupe,
car, sinon, $C$ serait dégénérée dans~$\gm^m$.

Soit $\bar X$ l'adhérence de~$X$ dans~$\P^n$,
soit $W$ celle de~$S$.
En résolvant les indéterminées de l'application rationnelle~$\phi$
de~$\bar X\times\bar X$ dans~$W$,
on obtient une surface projective~$V$ contenant~$X\times X$ comme ouvert dense,
munie d'un morphisme birationnel et propre~$\pi$ vers~$\bar X\times\bar X$
et d'un morphisme génériquement fini vers~$W$ prolongeant~$\phi$;
notons-le encore~$\phi$.
 Par suite, le fibré en droites $\phi^*\mathscr O(1)$ sur~$W$
est big et nef (gros et numériquement effectif);
décomposons-le sous la forme $\mathscr L(E)$, où $\mathscr L$
est un $\Q$-diviseur ample et $E$ est un $\Q$-diviseur effectif.
Cela entraîne, sur l'ouvert dense $U=V\setminus \abs E$,
une inégalité de hauteurs
\[ h_{\mathscr O(1)}( \phi(P))  = h_{\phi^*\mathscr O(1)}(P)
\geq h_{\mathscr L}(P)  + c_1 \]
pour tout point $P\in V(\bQ)$ n'appartenant pas à~$\abs E$.
En tirant par~$\pi$ le fibré ample $\mathscr O(1)\boxtimes\mathscr O(1)$
sur~$\bar X\times\bar X$  on obtient un fibré en droites
big et nef sur~$V$ et une inégalité de hauteurs
\[ h_{\mathscr L}(P) \gg h(P_1)+h(P_2)-1, \]
où $(P_1,P_2)=\pi(P)$.
Le fermé complémentaire $V\setminus U$
est une réunion finie de courbes et de points. Si la
restriction de~$\phi$ à une telle courbe~$Y\subset V\setminus U$
est un morphisme fini,
on a encore une inégalité similaire. 
Sinon, $\phi(Y)$ est réduit à un point~$R$ de~$\gm^n$; 
si de plus $Y$ rencontre~$X\times X$, 
on constate que $R$ appartient au stabilisateur
de~$X$ dans~$\gm^m$, donc $R=1$.

On en déduit ainsi une inégalité 
\begin{equation}\label{eq.hauteurs}
 h( \phi(P,Q)) \gg h(P)+h(Q)- 1, 
\end{equation}
valable pour tout couple~$(P,Q)$ de~$X(\bQ)\times X(\bQ)$
tel que $P\neq Q$.

Cette analyse entraîne, pour un point~$(P,Q)$ de~$X\times X$,
l'\og alternative\fg suivante:
\begin{enumerate}
\item $P=Q$;
\item $P\neq Q$ et $\phi(P,Q)\in S^\oa$;
\item $P\neq Q$ et $\phi(P,Q)$ appartient à une courbe
atypique maximale de~$S$.
\end{enumerate}
Précisément, on va prouver la finitude de l'ensemble
$X\cap G^{[2]}$ en considérant un point~$P$ de cet ensemble
et en choisissant~$Q$ de la forme $\sigma(P)$,
où $\sigma \in \Gal(\bQ/K)$. 

Il s'agit de conclure
à la finitude dans chacun des trois cas, qu'on discute maintenant
un par un.

Quitte à remplacer~$K$ par une extension finie, on suppose
que toutes les courbes atypiques maximales de~$S$
sont de la forme $Y=S\cap R\cdot H$, où $H$ est
un sous-tore minimal de~$\gm^m$ de codimension~$\geq 2$
et $R$ un point $K$-rationnel de~$\gm^m(K)$.
On suppose aussi que le point~$P_0$ est $K$-rationnel.
Rappelons enfin que l'on raisonne par récurrence
et que l'on suppose le théorème~\ref{theo.maurin'}
vrai pour une courbe d'un tore de dimension~$<n$
qui n'est pas contenue dans un sous-tore strict.

\begin{lemm}
L'ensemble $X(K)\cap G^{[2]}$ est fini.
\end{lemm}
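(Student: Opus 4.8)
The plan is to prove the lemma directly, by reducing $X(K)\cap G^{[2]}$ to a Mordell--Lang type statement for the non-degenerate curve~$C$. Since $X=C\times\{P_0\}$ with $P_0$ \emph{fixed}, projection to the first factor identifies $X(K)\cap G^{[2]}$ with the set of $P\in C(\bQ)$ for which $(P,P_0)\in G^{[2]}$, so it is enough to bound the latter set. Fix such a point and two independent multiplicative relations $(b_1,c_1),(b_2,c_2)\in\Z^m\times\Z^{n-m}$, that is, $P^{b_i}=P_0^{-c_i}$ for $i=1,2$. The first observation is that $b_1$ and $b_2$ are $\Q$-linearly independent in~$\Z^m$: a nontrivial integer combination $\lambda_1(b_1,c_1)+\lambda_2(b_2,c_2)$ with vanishing $\gm^m$-part would give $P_0^{-(\lambda_1c_1+\lambda_2c_2)}=1$, hence $\lambda_1c_1+\lambda_2c_2=0$ because $P_0$ generates~$\gm^{n-m}$, contradicting the independence of the two relations.

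Let $\Gamma_0\subset\bQ^*$ be the subgroup generated by the coordinates of~$P_0$ and let $\Gamma'\subset\bQ^*$ be its division hull; then $\Gamma'$ is a divisible subgroup of finite rank (equal to $n-m$), it contains~$\Gamma_0$, so $P^{b_i}=P_0^{-c_i}\in\Gamma'$. Because $\Gamma'$ is divisible and the integer matrix with rows $b_1,b_2$ has rank~$2$, the homomorphism $(\Gamma')^{m}\to(\Gamma')^{2}$, $z\mapsto(z^{b_1},z^{b_2})$, is surjective: written additively it is $\operatorname{id}_{\Gamma'}$ tensored with a map $\Z^m\to\Z^2$ of finite cokernel, and a finite group tensored with a divisible group vanishes. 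Choosing $z\in(\Gamma')^{m}\subset\gm^m(\bQ)$ with $z^{b_i}=P^{b_i}$, the point $w:=z^{-1}P$ satisfies $w^{b_1}=w^{b_2}=1$, hence lies in the algebraic subgroup $\{y\in\gm^m:y^{b_1}=y^{b_2}=1\}$ of codimension~$2$; thus $w\in(\gm^m)^{[2]}$ and
\[ P=z\cdot w\ \in\ (\Gamma')^{m}\cdot(\gm^m)^{[2]}. \]

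Consequently every point of $X(K)\cap G^{[2]}$ projects into $C(\bQ)\cap\bigl((\Gamma')^{m}\cdot(\gm^m)^{[1+\dim C]}\bigr)$, where $(\Gamma')^{m}$ has finite rank and $C$ is non-degenerate, i.e.\ not geometrically degenerate. By the theorem of \textsc{Rémond} and \textsc{Maurin} (Theorem~\ref{theo.nd.gamma}) in the toric case, this intersection is not Zariski dense in~$C$; as $C$ is a curve it is finite, and therefore so is $X(K)\cap G^{[2]}$. The one delicate point is precisely this last finiteness: for a \emph{fixed} pair $(b_1,b_2)$ the set $C\cap\bigl((\Gamma')^{m}\cdot\{y^{b_1}=y^{b_2}=1\}\bigr)$ is already finite by the classical Mordell--Lang theorem for curves in tori (\textsc{Liardet}), but the exponent vectors range over an infinite family of codimension-$2$ subgroups of~$\gm^m$, so what is genuinely required is the \emph{uniform} statement over all of them --- which is what the finite-rank Mordell--Lang input supplies, and which in the present approach is carried by a further descent replacing~$C$ by a non-degenerate curve in a torus of strictly smaller dimension.
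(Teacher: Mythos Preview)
Your argument is correct but takes a different route and in fact proves more than the lemma. You never use the $K$-rationality: your reduction shows that \emph{every} $(P,P_0)\in X(\bQ)\cap G^{[2]}$ has $P\in C(\bQ)\cap\bigl((\Gamma')^m\cdot(\gm^m)^{[2]}\bigr)$, and then Theorem~\ref{theo.nd.gamma} finishes. This is exactly the reduction behind Corollary~\ref{coro.remond} in the abelian case; it dispatches the whole degenerate case of Theorem~\ref{theo.maurin'} at once, rendering the three-lemma scheme of this section superfluous. The price is that you import Maurin's finite-rank theorem, which is precisely the input the \textsc{Bombieri}--\textsc{Habegger}--\textsc{Masser}--\textsc{Zannier} argument presented here is designed to avoid.

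The paper's proof is more elementary and genuinely uses the hypothesis $P\in X(K)$. Examining the projections of~$X$ onto the coordinate planes~$\gm^2$, one finds a finite set $\Delta\subset\Z^n$ and a finite set~$\Sigma$ of places of~$K$ such that, for $P\in X(K)$ and $v\notin\Sigma$, the valuation vector $v(P)=(v(\xi_1),\dots,v(\xi_n))$ is proportional to some $\delta\in\Delta$. If $v(P)=0$ for all $v\notin\Sigma$, the point is $\Sigma$-integral and Liardet's theorem (classical Mordell--Lang for curves in tori) applies. If some $v(P)\neq 0$ is parallel to~$\delta$, then every multiplicative relation satisfied by~$P$ lies in the hyperplane~$\delta^\perp$; after a coordinate change sending~$\delta$ to $(0,\dots,0,1)$, the two independent relations witnessing $P\in G^{[2]}$ live in $\Z^{n-1}\times\{0\}$, so the projection of~$P$ to~$\gm^{n-1}$ already lies in $(\gm^{n-1})^{[2]}$, and one concludes by the induction hypothesis on~$n$.
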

\begin{proof}
En analysant les équations des $n(n-1)/2$ projections évidentes
de~$X\subset\gm^n$ sur~$\gm^2$, on prouve qu'il
existe une famille finie~$\Delta\subset\Z^n$
et une famille finie de places~$\Sigma$ de~$K$ telles que,
pour tout point~$P=(\xi_1,\dots,\xi_n)\in X(K)$ et toute 
valuation discrète~$v\not\in\Sigma$, 
le vecteur $v(P)=(v(\xi_1),\dots,v(\xi_n))$ soit
proportionnel à l'un des éléments de~$\delta$.

Les points~$P\in X(K)$ tels que $v(P)=0$ pour tout~$v\not\in\Sigma$
sont des points $\Sigma$-entiers de~$X$. Leur finitude
est garantie par le théorème de \textsc{Liardet}~\cite{liardet:1975}
sur la conjecture de Mordell--Lang pour les courbes dans~$\gm^n$.

Si $P$ est un point de~$X(K)$, toute valuation~$v\not\in\Sigma$
telle que $v(P)\neq 0$ fournit une contrainte sur
les sous-groupes possibles contenant~$P$. Supposons
en effet $v(P)$ parallèle à un élément~$\delta$ de~$\Delta$.
Quitte à changer
les coordonnées sur~$\gm^n$, on peut supposer $\delta=(0,\dots,0,1)$,
et tout sous-groupe de~$\gm^n$ contenant~$P$ est contenu
dans~$\gm^{n-1}\times \{1\}$.
Si $P\in G^{[2]}$, il en est alors de même de
la projection de~$P$ dans~$\gm^{n-1}$ et l'hypothèse
de récurrence conclut à la finitude voulue.
\end{proof}

\begin{lemm}
L'ensemble des points~$P\in X(\bQ)\cap G^{[2]}$ pour lesquels
il existe $\sigma\in \Gal(\bQ/K)$ tel que $\phi(P,\sigma(P))\in S^\oa$
est fini.
\end{lemm}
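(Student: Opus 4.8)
The plan is to reduce the statement to \textsc{Habegger}'s bounded-height theorem~\ref{theo.habegger} for the surface~$S$, and then to the finiteness proposition~\ref{prop.fini} for the curve~$X$ itself. Write $P=(P_1,P_0)$ with $P_1\in C$, and fix $\sigma\in\Gal(\bQ/K)$ with $\sigma(P)\neq P$ and $\phi(P,\sigma(P))\in S^\oa$; this is exactly case~(2) of the alternative above. Since $C$ is defined over~$K$ and $P_0$ is $K$-rational, one has $\sigma(P)=(\sigma P_1,P_0)$, hence $\phi(P,\sigma(P))=(\eta,1,\dots,1)$ where $\eta=\psi(P_1,\sigma P_1)=P_1\cdot(\sigma P_1)^{-1}$ is a point of~$S\subset\gm^m$ lying in~$S^\oa$, and $h(\phi(P,\sigma(P)))=h(\eta)$ since padding with coordinates equal to~$1$ leaves the standard height unchanged.

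The crux is to show that $\eta$ satisfies at least two independent multiplicative relations in~$\gm^m$, i.e.\ that $\eta\in(\gm^m)^{[2]}$. Let $\xi_1,\dots,\xi_n$ be the coordinates of~$P$; then $\xi_{m+1},\dots,\xi_n\in K^*$ (these are the coordinates of~$P_0$) and $\eta_j=\xi_j/\sigma\xi_j$ for $1\leq j\leq m$. If $(a_1,\dots,a_n)\in\Z^n$ is a multiplicative relation of~$P$, then $\prod_{j=1}^m\xi_j^{a_j}=\prod_{i=m+1}^n\xi_i^{-a_i}$ belongs to~$K^*$ and is therefore fixed by~$\sigma$; dividing by its image under~$\sigma$ yields $\prod_{j=1}^m\eta_j^{a_j}=1$, so $(a_1,\dots,a_m)$ is a relation of~$\eta$. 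Thus the projection $\Z^n\to\Z^m$ onto the first $m$ coordinates maps the relation lattice of~$P$ into that of~$\eta$, and its restriction to the former has trivial kernel, this kernel being contained in the relation lattice of~$P_0$, which is zero because $P_0$ lies in no strict algebraic subgroup of~$\gm^{n-m}$. Since $P\in G^{[2]}$, the relation lattice of~$P$ has rank~$\geq 2$, hence so has that of~$\eta$. I expect this bookkeeping with relation lattices — and in particular pinning down exactly where the two hypotheses on~$P_0$ (being $K$-rational, and lying in no strict subgroup) are used — to be the only genuinely delicate point.

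Because $\dim(S)=2$, theorem~\ref{theo.habegger} provides a real number~$B_0$, depending only on~$S$, such that $h(y)\leq B_0$ for every $y\in S^\oa(\bQ)\cap(\gm^m)^{[2]}$; in particular $h(\eta)\leq B_0$. Feeding this into the height inequality~\eqref{eq.hauteurs}, which applies since $\sigma(P)\neq P$, and using the Galois invariance $h(\sigma(P))=h(P)$ of the standard height, one gets
\[ 2\,h(P)-1\;=\;h(P)+h(\sigma(P))-1\;\ll\;h(\phi(P,\sigma(P)))\;=\;h(\eta)\;\leq\;B_0, \]
so $h(P)\leq B$ for a constant~$B$ independent of~$P$ and~$\sigma$. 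Finally, $X$ being an irreducible curve of~$\gm^n$ contained in no strict algebraic subgroup, one has $X^\ta=X$: a curve can be deleted from~$X^\ta$ only if it is itself a positive-dimensional atypical component of some intersection~$X\cap H$, which would force $X$ into a subgroup of codimension~$\geq 2$. Proposition~\ref{prop.fini}, applied with $1+\dim(X)=2$ and the above~$B$, then shows that $\{x\in X^\ta(\bQ)\cap G^{[2]}:h(x)\leq B\}$ is finite; since every point~$P$ considered in the lemma belongs to it, the lemma follows.
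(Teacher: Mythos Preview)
Your argument is correct and follows exactly the paper's line --- apply Theorem~\ref{theo.habegger} to the surface~$S$, feed the resulting bound into inequality~\eqref{eq.hauteurs} together with $h(\sigma(P))=h(P)$, and conclude via Proposition~\ref{prop.fini} --- while making explicit the step, tacit in the paper, that $\eta\in(\gm^m)^{[2]}$; your injectivity argument on relation lattices (using both that $P_0$ is $K$-rational and that it lies in no strict subgroup of~$\gm^{n-m}$) handles this cleanly. One small slip: a positive-dimensional atypical component of $X\cap H$ only forces $H$ to be \emph{strict}, i.e.\ of codimension~$\geq 1$, not~$\geq 2$; but since $X$ lies in no strict subgroup this does not affect your conclusion that $X^\ta=X$.
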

\begin{proof}
D'après le théorème~\ref{theo.habegger}, il existe
un nombre réel~$B$ tel que $h(\phi(P,\sigma(P)))\leq B$.
Comme $h(\sigma(P))=h(P)$,
l'inégalité de hauteurs~\eqref{eq.hauteurs}
implique alors que $h(P)$ est majoré sur l'ensemble considéré
par le lemme.
On conclut alors par la proposition~\ref{prop.fini}.
\end{proof}

\begin{lemm}
Soit $Y\subset S$ une courbe atypique maximale.
L'ensemble des points~$P\in X(\bQ)\cap G^{[2]}$
pour lesquels il existe $\sigma\in \Gal(\bQ/K)$
tel que $P\neq \sigma(P)$ et  $\phi(P,\sigma(P))\in Y$ 
est fini.
\end{lemm}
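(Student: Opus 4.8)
The plan is a descent on $n$: reduce the statement, via the inductive form of the theorem already in force, to a non-degenerate curve inside a torus of dimension $<n$. Write the given maximal atypical curve as $Y=S\cap RH$, with $H\subset\gm^m$ a subtorus of codimension $\geq 2$ chosen \emph{minimal} with $Y\subset RH$, and $R\in\gm^m(K)$; since $S\subset\gm^m\times\{(1,\dots,1)\}$ we identify $Y$ with its image in $\gm^m$, so that for $P=(P_1,P_0)\in X=C\times\{P_0\}$ (with $P_1\in C$) the condition $\phi(P,\sigma(P))\in Y$ reads $P_1\,\sigma(P_1)^{-1}\in RH$. As $Y$ is a curve, $H$ is non-trivial, hence $1\leq\dim H\leq m-2\leq n-3$.

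First I would reduce to the case where $R$ is a torsion point. Let $\pi\colon\gm^m\to T:=\gm^m/H$, put $u:=\pi(P_1)$ and $\rho:=\pi(R)\in T(K)$. Then $u\,\sigma(u)^{-1}=\rho$, so $\sigma^k(u)=u\,\rho^{-k}$ for every $k$; since $u$ has only finitely many conjugates over $K$, $\rho$ has finite order. Replacing $R$ by a torsion point with the same class modulo $H$ (which alters neither $RH$ nor $Y$, and leaves $H$ minimal), I may assume $R\in(\gm^m)_\tors$. Now set $z:=R^{-1}\phi(P,\sigma(P))=R^{-1}P_1\,\sigma(P_1)^{-1}$ and $Y_0:=R^{-1}Y\subset H$, a curve in the torus $H$. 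Because $H$ was chosen minimal and $R$ is torsion, $Y_0$ is contained in no strict algebraic subgroup of $H$; as $\dim H<n$, the inductive hypothesis — Theorem~\ref{theo.maurin'} applied to $Y_0\subset H$ — shows that $Y_0\cap H^{[2]}$ is finite.

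The crux of the proof is therefore to establish that $z\in H^{[2]}$ for all but finitely many of the points $P$ in question. Granting this, $z$, and hence $\phi(P,\sigma(P))=zR$, lies in a finite set $F$; since we are in the case $\sigma(P)\neq P$ we have $\phi(P,\sigma(P))\neq 1$, so, the stabiliser of $C$ in $\gm^m$ being trivial, $C\cap Cf$ is finite for each $f\in F$. As $P_1\in C\cap C\cdot\phi(P,\sigma(P))$, the point $P_1$, and thus $P=(P_1,P_0)$, ranges over a finite set, which is the assertion of the lemma.

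The main obstacle is precisely to force $z\in H^{[2]}$. Note that $z\in(\gm^m)^{[2]}$ is automatic here (a point of $RH$ satisfies at least $\codim H\geq 2$ independent multiplicative relations), so this contributes nothing; the constraint must come from $P\in G^{[2]}$, which genuinely involves the coordinates $P_0$. Concretely, $P$ lies in an algebraic subgroup $M\subset\gm^n$ of codimension $\geq2$, defined by monomial equations over $\Q$, so $\sigma(P)\in M$ as well; choosing two independent relations $(\alpha^{(i)},\beta^{(i)})\in\Z^m\times\Z^{n-m}$, $i=1,2$, of $P$ — whose first parts $\alpha^{(1)},\alpha^{(2)}$ are $\Z$-independent because $P_0$ lies in no strict subgroup of $\gm^{n-m}$ — one computes, using $P_1^{\alpha^{(i)}}=P_0^{-\beta^{(i)}}=\sigma(P_1)^{\alpha^{(i)}}$, that $z^{\alpha^{(i)}}=R^{-\alpha^{(i)}}$ is a root of unity. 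Thus $z$ satisfies two relations up to torsion governed by $\alpha^{(1)},\alpha^{(2)}$, and these become two independent relations on $z$ inside $H$ exactly when the restrictions $\alpha^{(1)}|_H,\alpha^{(2)}|_H$ remain independent. To dispose of the remaining $P$, where these restrictions degenerate, I would run the quantitative Galois argument of Proposition~\ref{prop.fini} relative to $H$: normalise the relation lattice of $P$ by a geometry-of-numbers (LLL/Schlickewei) procedure, bound $[\Q(P):\Q]$ by Bézout, and combine this with the inductive hypothesis to place a suitable projection of $P_1$ on a non-degenerate curve of a torus of dimension $<n$ inside a codimension-$2$ subgroup. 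This relative bookkeeping — reconciling the subtorus $H$ with the lattice coming from the relations of $P$ — is where I expect the real work to lie.
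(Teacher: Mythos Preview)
Your route --- translate $Y$ into $H$ and apply the inductive hypothesis to the curve $Y_0=R^{-1}Y\subset H$ --- is a genuine alternative to the paper's argument, and several pieces match: the reduction of $R$ to a torsion point by telescoping over $\langle\sigma\rangle$ is the same, and your concluding step (once $\phi(P,\sigma(P))$ is trapped in a finite set $F$ not containing~$1$, the inclusion $P_1\in\bigcup_{f\in F}C\cap Cf$ finishes because the stabiliser of~$C$ is trivial) is exactly how the paper handles its own exceptional case.

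The gap is precisely where you flag it, and it is a real one. Your argument yields $z\in H^{[2]}$ only when $\alpha^{(1)}|_H,\alpha^{(2)}|_H$ remain independent; in the degenerate case the plan to ``run Proposition~\ref{prop.fini} relative to~$H$'' is not a proof, and I do not see how to turn it into one --- a relation with $\alpha^{(i)}\in\operatorname{Ann}_{\Z^m}(H)$ simply contributes nothing toward constraining~$z$ inside~$H$. The paper sidesteps this by descending in the \emph{opposite} direction: it passes to the quotient $\gm^n/H$ and applies induction to the image curve~$X_H$ there (still contained in no strict subgroup, and $\dim(\gm^n/H)<n$). The mechanism is that $A\subset B$ always holds, and for $\phi(P,\sigma(P))$ outside a finite exceptional subset of~$Y$ one has $B\subset\operatorname{Ann}_{\Z^n}(H)$; hence $A\subset\operatorname{Ann}_{\Z^n}(H)$, so \emph{both} relations of~$P$ descend to relations on $P_H\in\gm^n/H$, placing $P_H\in X_H\cap(\gm^n/H)^{[2]}$. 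Thus the very degeneracy $\alpha^{(i)}\in\operatorname{Ann}(H)$ that blocks your $H$-internal argument is what \emph{drives} the paper's descent. Your closing remark about ``placing a suitable projection of~$P_1$ on a non-degenerate curve of a torus of dimension~$<n$ inside a codimension-$2$ subgroup'' is in fact this idea; you should develop it in place of the attempt to force $z\in H^{[2]}$.
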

\begin{proof}
Par hypothèse, il existe un sous-tore~$H$
de $\gm^m$ et un point rationnel~$R\in\gm^m(K)$
tel que $Y$ soit une composante de~$S\cap R\cdot H$.
Appliquons~$\sigma,\sigma^2,\dots$ à  la relation $P\cdot \sigma(P)^{-1}\in R\cdot H$ et faisons-en le produit. 
Si $e$ est l'ordre de~$\sigma$ dans~$\Gal(\bQ/K)$,
on obtient que $R^e\in H$. Dans des coordonnées de~$\gm^m$
où $H=\gm^k\times\{(1,\dots,1)\}$, on voit que
les coordonnées d'indices~$k+1$ à~$m$ de~$R$ sont des racines
de l'unité contenues dans~$K$ ; il en résulte
que $R\cdot H=R_1\cdot H$, où $R_1\in\gm^m(K)$ est d'ordre~$e$.

Soit $A$ (\resp $B$) le sous-groupe de~$\Z^n$ 
formé des vecteurs~$(a_1,\dots,a_n)$
tels que le caractère $x\mapsto x_1^{a_1}\dots x_n^{a_n}$
s'annule sur~$P$ (\resp sur~$\phi(P,\sigma(P))$).
Sauf si $\phi(P,\sigma(P))$ appartient à un ensemble fini de points 
exceptionnels de~$Y$, 
le sous-groupe~$B$ annule~$H$.
Supposons pour l'instant que ce soit le cas.
Alors, $B$ contient~$A+\{(0,\dots,0)\}\times\Z^{n-m}$.
En passant au quotient par~$H$, on obtient une courbe~$X_H$ de~$\gm^n/H$
qui n'est contenue dans aucun sous-groupe algébrique strict;
de plus, les relations de dépendance satisfaites par~$P$
en fournissent pour l'image~$P_H$ de~$P$ ; on en déduit
la finitude requise par récurrence.

Il reste à traiter le cas où $\phi(P,\sigma(P))$
est un point exceptionnel de~$Y$. Dans ce cas, quitte
à étendre le corps~$K$, on peut supposer que $Q=\phi(P,\sigma(P))\in \gm^n(K)$.
Si $e$ est l'ordre de~$\sigma$ dans~$\Gal(\bQ/K)$,
on a de nouveau $Q^e=1$ ; quitte à remplacer~$P$ et~$X$
par leur image par l'élévation à la puissance~$e$,
on se ramène au cas où $e=1$, ce qui contredit le fait
que $\sigma(P)\neq P$.
\end{proof}

Cela conclut la preuve du théorème~\ref{theo.maurin'}.

\subsection{Inégalité de Vojta et sous-groupes de rang fini}

Les conjectures~\ref{conj.pink} et~\ref{conj.pink-ml}
généralisent les questions de Manin--Mumford et Mordell--Lang.
On peut en fait les considérer comme
des variantes uniformes de ces questions modulo tous les quotients
de la variété semi-abélienne~$G$ qui sont de dimension~$>\dim(X)$.
La stratégie de \textsc{Rémond} consiste à appliquer
une version uniforme 
des méthodes développées par \textsc{Vojta}, \textsc{Faltings}
et \textsc{McQuillan}, ainsi que de la version~\cite{bombieri1990}
qu'en a donnée \textsc{Bombieri},  pour établir la conjecture de Mordell--Lang
dans les variétés semi-abéliennes.
Il l'a développée dans plusieurs articles 
(\cite{remond2000c,remond2001,remond:2002,remond:2005}), 
l'application à la question du présent rapport fait
l'objet des articles~\cite{remond-viada:2003,remond:2007,remond:2009}
(le premier, en collaboration avec E.~\textsc{Viada})
et est résumée dans l'article de survol~\cite{remond:2009b}
et les lignes qui suivent n'en sont qu'une rapide synthèse.
Pour simplifier la discussion, nous supposons ici que $G$ est une variété
abélienne (\textsc{Maurin} a traité le cas des tores dans~\cite{maurin:2011},
le cas des variétés semi-abéliennes est encore ouvert).

Soit donc $X$ une sous-variété (fermée, irréductible) d'une variété
abélienne~$G$ ; posons $m=\dim(X)$.
Si $X$ est une courbe de genre~$\geq 2$,
l'inégalité de Vojta compare de façon
uniforme la hauteur (de Néron--Tate, \emph{i.e.}, normalisée et symétrique)
d'un couple~$(x,y)$ de~$X\times X$ à celle du point~$ax-y$:
elle affirme qu'il existe des nombres réels~$c_1,c_2,c_3$
tels que
\[ h(ax-y)\geq c_1 (a^2 h(x)+h(y)) \]
pour tout couple~$(x,y)$ de points de~$X^2(\bQ)$ et
tout entier naturel~$a$ tels que $a\geq c_2$, $h(x)\geq c_3$, $a^2h(y)\geq c_3$.
Si l'on choisit $a^2$ proche de~$h(y)/h(x)$, on obtient une minoration
de l'angle formé par les droites~$\R x$ et~$\R y$ dans l'espace
vectoriel réel $G(\bQ)\otimes\R$.
Lorsque $X$ et~$G$ sont définies sur un corps de nombres~$K$,
et $x$, $y$ appartiennent à~$X(K)$, le théorème de Mordell--Weil
entraîne alors que l'ensemble des points de~$X(K)$ de hauteur~$\geq c_3$
est fini; il en est par suite de même de~$X(K)$ lui-même
et l'on a prouvé, suivant~\textsc{Vojta}, la conjecture de Mordell.

Revenons au cas général et
soit $Z_0$ la réunion des translatés de sous-variétés 
abéliennes de~$G$ qui sont contenues dans~$X$ ; c'est l'ensemble exceptionnel
pour le problème de Mordell--Lang.
Soit $a=(a_0,\dots,a_m)\in\Z^{m+1}$; si $Z_0\neq X$, le morphisme
\[ \beta_a\colon  X^{m+1} \ra G^{m}  ,
\quad (x_0,\dots,x_m)\mapsto (a_1x_1-a_0x_0,\dots,a_{m}x_{m}-a_{m-1} x_{m-1}) \]
est génériquement fini et l'inégalité de Vojta affirme que
\[ \sum_{i=1}^{m} h(a_{i} x_{i} -a_{i-1}x_{i-1})  
   \gg \sum_{i=0}^m a_i^2 h(x_i) \]
pour tout $x\in (X\setminus Z_0)^{m+1}(\bQ)$ et tout $a\in\Z^{m+1}$
tels que pour tout~$i$, $a_{i+1}\ll a_i$ et $a_i^2h(x_i)\gg a_0^2$.

Soit maintenant $\Gamma$ un sous-groupe de rang fini de~$G(\bQ)$;
nous supposerons toujours que $\Gamma$ est \emph{saturé},
c'est-à-dire que tout endomorphisme surjectif de~$G$
induit  un endomorphisme surjectif de~$\Gamma$.
Dans~\cite{remond:2009}, \textsc{Rémond} introduit  
plusieurs candidats pour un lieu exceptionnel.
La généralisation la plus évidente de l'ensemble~$X\setminus X^\ta$ est définie
ainsi: il s'agit de l'ensemble~$ Z_{X,\Gamma} $ 
des points~$x\in X$ pour lesquels il existe une sous-variété abélienne~$H$
de~$G$ et un point~$\gamma$ de~$\Gamma$
tels que 
$\dim_x (X\cap (\gamma+H))>\max(0,\dim(X)-\codim(H))$.

En vue de généraliser la définition de l'ensemble~$X^\oa$
(son complémentaire dans~$X$, plutôt),
\textsc{Rémond} définit trois séries d'ensembles exceptionnels,
indexées par un entier~$r\in\{0,\dots,\dim(G)\}$:
\begin{itemize}
\item
L'ensemble~$Z_{X,\ano}^{(r)}=X\setminus X^{\oa,[r]}$ 
formé des tels points~$x$
pour lesquels il existe une sous-variété abélienne~$H$ 
de~$G$ telle que\footnote
{Les lettres~$\ano$ signifient \emph{anomalous},
\textsc{Rémond} utilise la notation $\mathrm{an}$.}
\[ \dim_x(X\cap(x+H))>\max(0,r-1-\codim(H)). \]
\item 
L'ensemble~$Z_{X,\Q}^{(r)}$ réunion des sous-variétés fermées
irréductibles~$Y$ de~$X$ telles qu'il existe des diviseurs 
effectifs~$\mathscr L_1$ et~$\mathscr L_2$ sur~$G$
et un entier~$a$ 
tels que 
\begin{gather*}
 \deg(c_1(\mathscr L_1)^a c_1(\mathscr L_2)^{\dim(Y)-a} \cap [Y])=0 \\
   \rg(\mathscr L_1)-1\leq a\leq\dim(Y) \\
  \rg(\mathscr L_1+\mathscr L_2)\geq r.
\end{gather*}
(Le rang d'un fibré en droites~$\mathscr L$
est le plus grand entier naturel~$r$ tel  que $c_1(\mathscr L)^r$
ne soit pas numériquement trivial.)
\item
L'ensemble $Z_{X}^{(r)}$ défini de façon analogue à~$Z_{X,\Q}^{(r)}$
mais en considérant des diviseurs à coefficients réels.
\end{itemize}
D'après le théorème~1.4 de~\cite{remond:2009}, 
ce sont des parties fermées de~$X$ 
qui vérifient
\[ Z_{X,\Q}^{(r)} \subset Z_X^{(r)} \subset Z_{X,\ano}^{(r)}. \]
Observons aussi que lorsque~$\Gamma$ parcourt
l'ensemble des sous-groupes de rang fini de~$G(\bQ)$,
la réunion des $Z_{X,\Gamma}$ est égale à~$Z_{X,\ano}^{(1+\dim(X))}$.

\medskip

Grâce au théorème de complète réductibilité de Poincaré, toute
sous-variété abélienne~$H$ de~$G$ est la composante
neutre du noyau d'un endomorphisme~$\phi$
de~$G$.
Plutôt que de travailler dans~$G/H$, on peut appliquer~$\phi$.
On peut en outre se restreindre à un ensemble~$\Phi$ d'endomorphismes
qui sont presque des projecteurs, au sens où il existe un entier
naturel~$a$ tel que $\phi\circ\phi= a\phi$ et $\norm{\phi}\ll a$
(on a fixé une norme sur l'espace vectoriel réel~$\End(G)_\R$),
et dont les images forment un ensemble fini de sous-variétés
abéliennes de~$G$.

 
L'inégalité de Vojta uniforme que démontre \textsc{Rémond} est la suivante:
\begin{theo}[\cite{remond:2007}, proposition~5.1]
\label{theo.vojta-uniforme}
Avec ces notations, 
il existe des nombres réels~$c_1,c_2,c_3>0$ tels que l'on ait
\[ \sum_{i=1}^m h(\phi(a_{i}x_{i}-a_{i-1}x_{i-1}))
\geq c_1 \norm{\phi}^2 \sum_{i=0}^m a_i^2 h(x_i) \]
pour tout $x\in (X\setminus Z_{X}^{(r)})^{m+1}$, tout $a\in\N^{m+1}$
et tout $\phi\in \Phi$ dont l'image est de dimension~$\geq r$
tels que $a_{i+1}\leq c_2 a_i$ et $h(x_i)\geq c_3$ pour tout~$i$.
\end{theo}

Ce théorème
affirme ainsi l'existence d'une inégalité de hauteurs
pour des points en dehors du lieu exceptionnel~$Z_X^{(r)}$.
L'ensemble~$Z_{X,\ano}^{(r)}$, de définition peut-être plus naturelle,  
est parfois plus gros.  En outre, 
la proposition~1.3 de~\cite{remond:2007} (déjà mentionnée)
montre qu'il est nécessaire d'exclure les points de lieu~$Z_{X,\Q}^{(r)}$;
il est possible que ce soit aussi suffisant.

La preuve du théorème~\ref{theo.vojta-uniforme}
consiste en la vérification, délicate, des
hypothèses de l'\og inégalité de Vojta généralisée\fg 
que \textsc{Rémond} avait établie dans~\cite{remond:2005}. 
Il faut notamment établir
des minorations uniformes de degrés d'intersection, parmi
lesquelles:
\[ \deg( c_1(\beta_a^* (\phi,\dots,\phi)^*  \mathscr L)^{m(m+1)} \cap [X^{m+1}])\geq 
c_4 (a_0\dots a_m)^{2m} \norm{\phi}^{2m(m+1)} , \]
où $c_4$ est un nombre réel strictement positif.
L'existence de~$c_4$,
lorsque $a$ et~$\phi$ sont fixés, 
équivaut à ce que l'homomorphisme $(\phi,\dots,\phi)\circ \beta_a$
soit génériquement fini. \textsc{Rémond} établit cette
minoration uniforme en combinant des propriétés d'homogénéité
du membre de gauche (qui permettent de supposer que $a=(1,\dots,1)$)
et la possibilité d'étendre de façon continue
ce degré d'intersection au cas où $\phi$ appartient à~$\End(G)_\R$:
la définition de l'ensemble exceptionnel~$Z_X^{(r)}$ assure
que ce degré soit toujours strictement positif,
de même que toutes les variantes requises par le théorème
de~\cite{remond:2005}.

Pour toute partie~$\Sigma$ de~$G(\bQ)$ et tout nombre réel~$\eps>0$,
notons $\mathscr B(\Sigma,\eps)$ l'ensemble\footnote
{La lettre~$\mathscr B$ est l'initiale du mot boule.}
des points de~$G(\bQ)$
de la forme $x+y$, où $x\in\Sigma$ et $\hat h(y)\leq\eps$.

\begin{coro}[\cite{remond:2007}, Théorème~1.2]
Il existe un nombre réel~$\eps>0$ tel que l'ensemble 
$(X(\bQ)\setminus Z_{X}^{(r)})\cap \mathscr B(\Gamma+G^{[r]},\eps)$
soit de hauteur bornée.
\end{coro}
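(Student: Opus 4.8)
The plan is to argue by contradiction with the uniform Vojta inequality of Theorem~\ref{theo.vojta-uniforme}: I assume that for every $\eps>0$ the set $T_\eps:=(X(\bQ)\setminus Z_{X}^{(r)})\cap\mathscr B(\Gamma+G^{[r]},\eps)$ has unbounded height, and I exhibit, once $\eps$ is small enough, a configuration of points in $T_\eps$ that Theorem~\ref{theo.vojta-uniforme} forbids. Write each $x\in T_\eps$ as $x=\gamma_x+g_x+y_x$ with $\gamma_x\in\Gamma$, $g_x\in G^{[r]}$ and $\hat h(y_x)\leq\eps$. Since $g_x$ lies in an algebraic subgroup of $G$ of codimension $\geq r$, the complete reducibility theorem together with the very construction of $\Phi$ furnishes $\phi_x\in\Phi$, with $\operatorname{im}\phi_x$ of dimension $\geq r$, annihilating the neutral component of that subgroup; then $\phi_x(g_x)$ is a torsion point, so $\hat h(\phi_x(x)-\phi_x(\gamma_x))=\hat h(\phi_x(y_x))\leq c\,\norm{\phi_x}^2\eps$. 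Hence $\phi_x(x)$ lies within Néron--Tate distance $O(\norm{\phi_x}\sqrt\eps)$ of the point $\phi_x(\gamma_x)$, which belongs to $\tilde\Gamma:=\End(G)\cdot\Gamma$; as $\End(G)$ is a finitely generated $\Z$-module, $\tilde\Gamma$ has finite rank. After the harmless rescalings $\phi\mapsto n\phi$ (irrelevant to the Vojta inequality) $\Phi$ may be taken finite, so, passing to an infinite subfamily of $T_\eps$ of unbounded height, we may also assume $\phi_x=\phi$ is one fixed element of $\Phi$. From now on we work in the finite-dimensional real quadratic space $V=\tilde\Gamma\otimes_{\Z}\R$ with the form $\hat h$, writing $v_x:=\phi(\gamma_x)\in V$.

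Before packing, dispose of the case where $\hat h(v_{x})$ stays bounded along an infinite subsequence: there $\hat h(\phi(x))$ is bounded, and since $x\equiv\gamma_x+y_x$ modulo the neutral component $H$ of $\ker\phi$ (plus torsion), the point $x$ lies, up to a translation of bounded height, on $X\cap(\gamma_x+H)$; as $\operatorname{codim}H=\dim(\operatorname{im}\phi)\geq r$, this contradicts $x\notin Z_{X}^{(r)}$ for $\eps$ small. So we may assume $\hat h(v_{x_j})\to\infty$ along our sequence $(x_j)$. Fix a large integer $N$ (to be chosen) and a covering of the unit sphere of $(V,\hat h)$ by finitely many caps of angular radius $\leq\eta$; by pigeonhole we extract $m+1$ terms $x_0,\dots,x_m$ with $\hat h(v_{x_0})\leq\dots\leq\hat h(v_{x_m})=:M$, consecutive values separated by a factor $\geq N$, and the unit vectors $v_{x_i}/\hat h(v_{x_i})^{1/2}$ all in one cap; we may also take $\hat h(x_i)\geq c_3$. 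Put $a_i$ equal to the nearest positive integer to $(M/\hat h(v_{x_i}))^{1/2}$, so $a_m=1$ and $a_i\geq N^{1/2}-1$ for $i<m$; then $a_{i+1}\leq a_i\leq c_2 a_i$ for $N$ large, and the spacing forces $a_i\,\hat h(v_{x_i})^{1/2}=M^{1/2}\bigl(1+O(N^{-1/2})\bigr)$ for every $i$, the rounding error being small precisely because every $a_i$ with $i<m$ is large while $a_m=1$ is exact.

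The crux is the upper bound
\[ \sum_{i=1}^{m}\hat h\bigl(\phi(a_ix_i-a_{i-1}x_{i-1})\bigr)\;\leq\;C\,(\eta^2+N^{-1/2}+\eps)\,\norm{\phi}^2\sum_{i=0}^{m}a_i^2\,\hat h(x_i), \]
with $C$ depending only on $G$, $X$, $r$ and $\dim V$. Indeed, replacing each $\phi(x_i)$ by $v_{x_i}$ changes every summand by $O\bigl(\norm{\phi}^2\eps\,a_0^2\bigr)$, and $a_0^2\leq c_3^{-1}\sum_k a_k^2\hat h(x_k)$; while, since $a_iv_{x_i}$ and $a_{i-1}v_{x_{i-1}}$ make an angle $\leq2\eta$ and have $\hat h$-norms equal to $M^{1/2}(1+O(N^{-1/2}))$, the law of cosines gives $\hat h(a_iv_{x_i}-a_{i-1}v_{x_{i-1}})\ll(\eta^2+N^{-1/2})\,a_i^2\hat h(v_{x_i})\ll(\eta^2+N^{-1/2})\,\norm{\phi}^2\sum_k a_k^2\hat h(x_k)$, using $\hat h(v_{x_i})\ll\norm{\phi}^2\hat h(x_i)$. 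On the other hand, since none of the $x_i$ lies in $Z_{X}^{(r)}$, since $\dim(\operatorname{im}\phi)\geq r$, and since $a_{i+1}\leq c_2 a_i$ and $\hat h(x_i)\geq c_3$, Theorem~\ref{theo.vojta-uniforme} yields
\[ \sum_{i=1}^{m}\hat h\bigl(\phi(a_ix_i-a_{i-1}x_{i-1})\bigr)\;\geq\;c_1\,\norm{\phi}^2\sum_{i=0}^{m}a_i^2\,\hat h(x_i). \]
As $\sum_i a_i^2\hat h(x_i)\geq c_3>0$, the two displays are incompatible once $C(\eta^2+N^{-1/2}+\eps)<c_1$. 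Choosing first $\eta$ and $N$ with $C(\eta^2+N^{-1/2})<c_1/2$ leaves a genuine interval $(0,\eps_0)$ of admissible $\eps$; any such $\eps$ proves the corollary.

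The genuinely delicate point is the uniform upper bound just displayed --- more precisely, making it and all the auxiliary estimates uniform over the finitely many abelian subvarieties that can arise as $\operatorname{im}\phi$ and over all admissible configurations $(x_0,\dots,x_m)$. This is exactly where one needs the finiteness and near-idempotence properties built into $\Phi$; the generic finiteness of the morphisms $(\phi,\dots,\phi)\circ\beta_a$, which is guaranteed by $x_i\notin Z_{X}^{(r)}$ and is quantified through the uniform positivity of the intersection degrees underlying Theorem~\ref{theo.vojta-uniforme}; and the homogeneity of those degrees in $a$ and in $\phi$, which lets one normalise $a=(1,\dots,1)$ and extend $\phi$ continuously to $\End(G)_\R$. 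As in Vojta's proof of the Mordell conjecture, the argument delivers only bounded height; obtaining an actual finiteness statement from it, as needed for Theorem~\ref{theo.nd.gamma}, requires the separate finiteness results for bounded-height subsets discussed above.
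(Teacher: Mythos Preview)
Your overall strategy---contradict the uniform Vojta inequality by a packing argument in a finite-dimensional real space---is the paper's, and your upper bound computation via the law of cosines is exactly the right shape. But two steps do not go through as written.

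\medskip

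\textbf{The set $\Phi$ cannot be taken finite.} Rescaling $\phi\mapsto n\phi$ identifies endomorphisms with the same image \emph{and the same kernel}; it does not collapse $\Phi$ to a finite set. What is finite is the set of \emph{images} of the $\phi\in\Phi$, not the set of $\phi$'s themselves: two near-projectors onto the same abelian subvariety can have completely different kernels (on $E\times E$, compare $(x,y)\mapsto(x,0)$ with $(x,y)\mapsto(x+y,0)$). Since $\phi_x$ must annihilate the particular abelian subvariety containing~$g_x$, and there are infinitely many such subvarieties of codimension~$\geq r$, you cannot pigeonhole to a single~$\phi$. Your whole packing takes place in $V$ via $v_x=\phi(\gamma_x)$ with $\phi$ fixed, so this is structural, not cosmetic. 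The paper avoids the issue by packing \emph{simultaneously} in the two finite-dimensional spaces $\End(G)_\R$ and $\Gamma\otimes\R$: one extracts so that both $\phi_n/\norm{\phi_n}$ and $\gamma_n/\hat h(\gamma_n)^{1/2}$ are close to fixed limits, then applies Theorem~\ref{theo.vojta-uniforme} with the single choice $\phi=\phi_0$. The proximity of $\phi_i/\norm{\phi_i}$ to $\phi_0/\norm{\phi_0}$ is precisely what makes $\phi_0(P_i)$ small for $i\neq 0$.

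\medskip

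\textbf{The disposal step is not justified.} That $x$ lies in the coset $\gamma_x+y_x+H$ (plus torsion) is true of every point of~$G$ for a suitable translate; it says nothing about $\dim_x(X\cap(x+H))$ and therefore cannot place~$x$ in $Z_X^{(r)}$ (nor even in the larger $Z_{X,\ano}^{(r)}$). The case $\hat h(\phi(x_j))$ bounded while $\hat h(x_j)\to\infty$ is in fact the \emph{easy} case: apply Theorem~\ref{theo.vojta-uniforme} with all $x_i$ equal to~$x_j$ and any admissible sequence~$(a_i)$; the left side stays bounded while the right side tends to infinity. But this shortcut again presupposes a fixed~$\phi$, so it does not rescue the argument once the first gap is acknowledged.
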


\begin{rema}
Il convient de noter que dans ce résultat,
il n'y a pas le décalage d'exposants que l'on trouvait
dans le théorème~\ref{theo.habegger}. C'est en
quelque sorte le prix à payer
pour permettre de traiter des sous-groupes de rang fini.
En effet, d'après la proposition~1.3 de~\cite{remond:2007},
si $Z$ est un fermé de~$X$
tel que $(X(\bQ)\setminus Z)\cap \mathscr B(\Gamma+G^{[r]},\eps)$
est de hauteur bornée pour tout sous-groupe~$\Gamma$
de rang fini de~$G(\bQ)$, alors $Z$ contient~$Z_{X,\Q}^{(r)}$.
\end{rema}

\begin{proof}
Pour simplifier, on ne traite que la variante du corollaire
sans~$\eps$ et on démontre que l'ensemble 
$(X(\bQ)\setminus Z_X^{(r)})\cap (\Gamma+G^{[r]})$
est de hauteur bornée.
Considérons, par l'absurde, une suite $(x_n)$ dans cet ensemble dont
la hauteur tend vers l'infini. On écrit $x_n=\gamma_n+P_n+Q_n$,
où $\gamma_n\in\Gamma$ et $P_n$ appartient à une sous-variété
abélienne de codimension~$\geq\dim(X)$.
On fixe un endomorphisme~$\phi_n\in\Phi$ tel que $\phi_n(P_n)=0$;
on peut en outre supposer que $\max(h(\gamma_n),h(P_n)) \ll h(x_n)$.
Enfin, comme $\End(G)_\R$ et $\Gamma\otimes_\Z\R$ sont des
$\R$-espaces vectoriels de dimension finie,
et quitte à considérer des sous-suites convenables de la suite~$(x_n)$,
on peut supposer que 
$\phi_n/\norm{\phi_n}$ et
$\gamma_n/h(\gamma_n)$ 
sont assez proches d'un même élément de $\End(G)_\R$,
\resp de~$\Gamma_\R$.
Au prix d'une éventuelle extraction supplémentaire,
il est alors possible de construire des entiers $a_0,\dots,a_m$
de sorte à contredire
le théorème~\ref{theo.vojta-uniforme} (avec $\phi=\phi_0$).
\end{proof}

Lorsque $X$ n'est pas géométriquement dégénérée, 
$X^\oa\subset X\setminus Z_X^{(1+\dim(X))}$ n'est pas vide 
et le corollaire entraîne donc que
$X^\oa(\bQ)\cap (\Gamma+G^{[\dim(X)]})$ est de hauteur bornée.
Si, de plus, $G$ est à multiplications complexes, 
la proposition~\ref{prop.fini-abelien} entraîne que cet ensemble est fini,
démontrant du même coup le théorème~\ref{theo.nd.gamma}.

L'analogue abélien du théorème~\ref{theo.maurin'} s'ensuit
facilement:
\begin{coro}[\cite{remond:2007}, Corollaire~1.6]
\label{coro.remond}
Soit $X$ une courbe irréductible d'une variété abélienne
à multiplications complexes~$G$. Si $X$ n'est contenue
dans aucun sous-groupe  algébrique strict de~$G$,
alors $X(\bQ)\cap G^{[2]}$ est fini.
\end{coro}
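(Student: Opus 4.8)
Le plan est de distinguer deux cas selon la position de~$X$. Si $X$ n'est pas géométriquement dégénérée — ce qui, pour une courbe, revient à dire qu'elle n'est contenue dans aucun translaté de sous-groupe algébrique strict de~$G$ —, il suffira d'appliquer le théorème~\ref{theo.nd.gamma} avec $\Gamma=\{0\}$ et $d=\dim(X)=1$: il affirme que $X(\bQ)\cap G^{[2]}$ n'est pas dense dans~$X$, donc est fini puisque $X$ est une courbe irréductible. (De façon équivalente: comme $X^{\oa}\neq\emptyset$, on a $X\not\subset Z_X^{(2)}$, de sorte que la majoration de hauteur de \textsc{Rémond} rappelée ci-dessus rend $X(\bQ)\cap G^{[2]}$ de hauteur bornée; on conclut alors par la proposition~\ref{prop.fini-abelien}, en notant que $X^{\ta}=X$ — pour une courbe non contenue dans un sous-groupe strict, aucune intersection $X\cap H$ n'a de composante atypique de dimension positive.)

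Il restera à traiter le cas où $X$ est géométriquement dégénérée. On écrira alors $X\subset\gamma_0+A_0$ avec $A_0\subsetneq G$ une sous-variété abélienne choisie \emph{minimale} pour cette propriété, et l'on posera $Y:=X-\gamma_0\subset A_0$: par minimalité, $Y$ n'est contenue dans aucun translaté de sous-variété abélienne stricte de~$A_0$, donc n'est pas géométriquement dégénérée dans~$A_0$ (ni, a fortiori, contenue dans un sous-groupe strict de~$A_0$); et $A_0$, sous-variété abélienne de~$G$, est encore à multiplications complexes. En notant $p\colon G\to B:=G/A_0$ la projection et $b_0:=p(\gamma_0)$, on a $p(X)=\{b_0\}$. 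L'observation géométrique de départ sera que, si $x=\gamma_0+y\in X$ est contenu dans un sous-groupe algébrique~$H$ de codimension~$\geq 2$ dans~$G$, alors $H+A_0=G$: en effet $\gamma_0=x-y\in H+A_0$, donc $X=\gamma_0+Y\subset H+A_0$, et comme $X$ n'est dans aucun sous-groupe strict on a $H+A_0=G$; il en résulte que $H\cap A_0$ est de codimension~$\codim_G(H)\geq 2$ dans~$A_0$.

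Le point délicat sera de contrôler, \emph{uniformément en~$H$}, la «composante dans~$A_0$» de~$x$. Par le théorème de complète réductibilité de Poincaré, $H^{\circ}$ possède un supplémentaire abélien~$C$ de $(H\cap A_0)^{\circ}$, et la relation $H+A_0=G$ fait de $p|_C\colon C\to B$ une isogénie; en décomposant $x$ modulo la torsion et le long de $H^{\circ}=C+(H\cap A_0)^{\circ}$, puis en projetant sur~$A_0$ à l'aide d'un supplémentaire abélien de~$A_0$ dans~$G$, on obtiendra que $y=x-\gamma_0$ est, à addition près d'un point de torsion et d'un point d'un sous-groupe de~$A_0$ de codimension~$\geq 2$, de la forme $\rho(b_0)$ avec $\rho\in\operatorname{Hom}(B,A_0)_{\Q}$ la «pente» de~$C$. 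Comme $\operatorname{Hom}(B,A_0)$ est de type fini, les points $\rho(b_0)$ ainsi obtenus restent, lorsque~$H$ varie, dans un sous-groupe~$\Gamma_0$ de \emph{rang fini} de~$A_0(\bQ)$ (saturé, engendré par $\operatorname{Hom}(B,A_0)\cdot b_0$, la torsion de~$A_0(\bQ)$ et la composante de~$\gamma_0$ dans~$A_0$); d'où l'inclusion
\[ X(\bQ)\cap G^{[2]}\ \subset\ \gamma_0+\bigl(Y(\bQ)\cap(\Gamma_0+A_0^{[2]})\bigr). \]

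Il restera à appliquer le théorème~\ref{theo.nd.gamma} à la courbe non géométriquement dégénérée~$Y$ de la variété abélienne à multiplications complexes~$A_0$, avec le sous-groupe de rang fini~$\Gamma_0$: l'ensemble $Y(\bQ)\cap(\Gamma_0+A_0^{[2]})$ n'est pas dense dans~$Y$, donc est fini, et par suite $X(\bQ)\cap G^{[2]}$ est fini. L'obstacle principal est cette dernière réduction: s'assurer que le paramètre de translation reste dans un groupe de rang fini (ou, de façon équivalente grâce à la proposition~\ref{prop.fini-abelien}, dans un ensemble de hauteur bornée), autrement dit que la dépendance en~$H$ s'évanouit après projection sur~$A_0$ — ce qui tient à ce que le seul ingrédient variable, l'homomorphisme~$\rho$, ne peut rencontrer le point fixe~$b_0$ qu'à l'intérieur de l'image du groupe de type fini~$\operatorname{Hom}(B,A_0)$.
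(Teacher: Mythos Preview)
Your argument is correct and follows essentially the same route as the paper: translate~$X$ into the smallest abelian subvariety~$A_0$ such that $X\subset\gamma_0+A_0$, observe that $Y=X-\gamma_0$ is non-degenerate in~$A_0$, show that every $y=x-\gamma_0$ with $x\in X(\bQ)\cap G^{[2]}$ lies in $\Gamma_0+A_0^{[2]}$ for a fixed finite-rank group~$\Gamma_0$, and conclude by théorème~\ref{theo.nd.gamma}. The paper does not split into cases and describes the finite-rank group as the saturated subgroup generated by $\{\phi(\gamma_0):\phi\in\operatorname{Hom}(G,A_0)\}$, whereas you parametrise it via $\operatorname{Hom}(B,A_0)\cdot b_0$ together with the $A_0$-component of~$\gamma_0$; these two groups have the same division hull, so the reductions are equivalent.
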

\begin{proof}
Soit $H$ la plus petite sous-variété abélienne de~$G$ contenant
un translaté de~$X$ et soit $g$ un point de~$X$
tel que $X\subset g+H$. Alors, la courbe $Y=X-g$ est non dégénérée
dans~$H$, tandis que $g$ engendre $G/H$. Soit $\Gamma$
le plus petit sous-groupe saturé de~$H(\bQ)$  contenant
l'image de~$g$ par tout homomorphisme de~$G$ dans~$H$;
c'est un sous-groupe de rang fini de~$H(\bQ)$.
On constate que pour tout point~$x$ de~$X(\bQ)\cap G^{[2]}$,
$x-g$ appartient à~$Y(\bQ)\cap (\Gamma+ H^{[2]})$.
Appliqué à~$Y$ et au groupe~$\Gamma$,
le théorème~\ref{theo.nd.gamma} entraîne donc
que $X(\bQ)\cap G^{[2]}$ n'est pas dense dans~$X$.
Comme $X$ est une courbe, il est donc fini.
\end{proof}

\begin{rema}
Lorsqu'on oublie la partie~$G^{[r]}$, les énoncés précédents
fournissent un cas particulier 
de l'énoncé suivant, combinaison des conjectures de Mordell--Lang
et Bogomolov: \emph{Si $X$ n'est pas un translaté
d'un sous-groupe algébrique de~$G$, 
l'intersection de $X$ et de~$\mathscr B(\Gamma,\eps)$
n'est pas dense dans~$X$ pour la topologie de Zariski.}
Ce résultat a été établi par B.~\textsc{Poonen}~\cite{poonen99}
(pour les produits de variétés abéliennes et de tores)
et par \textsc{Rémond}~\cite{remond:2003} en général.
\end{rema}

\subsection{Conjecture de Bogomolov et finitude}

\textsc{Habegger}~\cite{habegger:2009b}, \textsc{Maurin}~\cite{maurin:2011}
et \textsc{Viada}~\cite{viada:2008,viada:2010}
ont montré comment
utiliser les versions effectives
de la conjecture de Bogomolov 
pour déduire du théorème~\ref{theo.habegger} (ou de ses variantes)
des énoncés de finitude.
Ces deux premiers auteurs traitaient le cas des tores, 
la dernière des variétés abéliennes, cas sur lequel nous allons
nous concentrer maintenant.
Pour un énoncé similaire au théorème~\ref{theo.viada}
dans le cas des tores, voir le théorème~11.6 de~\cite{maurin:2011}.

Dans tout ce paragraphe, on considère donc une variété
abélienne~$G$ définie sur~$\bQ$, un fibré en droites ample~$\mathscr L$ sur~$G$
et la hauteur canonique~$\hat h_{\mathscr L}$ associée.

Avec ces notations, \textsc{Viada} démontre le théorème suivant:
\begin{theo}[\cite{viada:2010}, Theorem~1.6]
\label{theo.viada}
On suppose vérifiée la conjecture~\ref{conj.bogomolov}.
Soit $X$ une sous-variété (fermée, irréductible) de~$G$, distincte de~$G$.

\begin{enumerate}
\item Supposons que $X$ ne soit contenue dans 
aucun sous-groupe algébrique strict de~$G$.
Pour tout nombre réel~$B$, il existe un nombre réel~$\eps>0$
tel que l'ensemble des points de~$X\cap \mathscr B(G^{[1+\dim(X)]},\eps)$
dont la hauteur est~$\leq B$ ne soit pas dense dans~$X$
pour la topologie de Zariski.
\item Supposons que $X$  ne soit contenue dans aucun translaté
de sous-variété abélienne de~$G$.
Soit $\Gamma$ un sous-groupe de rang fini de~$G$.
Pour tout nombre réel~$B$, il existe un nombre réel~$\eps>0$
tel que 
l'ensemble des points de~$X\cap \mathscr B(\Gamma\cdot G^{[1+\dim(X)]},\eps)$
dont la hauteur est~$\leq B$ ne soit pas dense dans~$X$
pour la topologie de Zariski.
\end{enumerate}
\end{theo}

En fait, la preuve ne requiert la conjecture~\ref{conj.bogomolov}
que sous une forme affaiblie, et
pour un ensemble fini de sous-quotients de puissances de~$G$,
ce qui permet, dans cet énoncé, 
de choisir la constante~$c$ indépendamment de~$G$.

\textsc{Viada} commence par établir l'équivalence
des deux assertions du théorème; la démonstration
se concentre alors sur la seconde.

De manière analogue à ce qui a été fait dans
le paragraphe sur l'inégalité de Vojta,
on écrit une sous-variété abélienne
de codimension au moins~$1+\dim(X)$
comme la composante neutre 
du noyau d'un homomorphisme surjectif. 
Pour simplifier les notations, supposons ici que $G$ est une puissance~$A^g$
d'une variété abélienne simple~$A$ de dimension~$a$.
Notons $E=\End_{\bQ}(A)$; c'est un sous-anneau du corps~$E\otimes\Q$.
Il suffit de considérer des homomorphismes surjectifs 
de but~$A^r$, où $r\in\{0,\dots,g\}$
est un entier tel que $ar\geq 1+\dim(X)$.
Ces homomorphismes sont donnés par une matrice~$r\times g$ à coefficients
dans~$E$.
Un argument de réduction de Gauß
couplé avec un énoncé d'approximation diophantienne
(lemme de Dirichlet) ramène à considérer un ensemble
fini~$\Phi$ d'homomorphismes $\phi\colon A^{g+s}\ra A^r$
dont la matrice est de la forme 
$\begin{pmatrix} m \mathrm I_r & L & L'\end{pmatrix} $
et est de norme $\ll \abs m$.

Il reste à prouver que pour tout $\phi\in\Phi$ et
tout point~$\gamma$ de~$A^s$, l'ensemble des points
de $A^{g+s}$ de la forme $(x,\gamma)$, 
où $x\in X$ est de hauteur~$\leq B$, 
où  $(x,\gamma)\in\mathscr B(\ker(\phi),{\eps/\norm{\phi}^2})$
n'est pas dense dans~$X\times\{\gamma\}$
pour la topologie de Zariski.
En revanche, le processus d'approximation modifie~$\eps$
et on n'a pas de contrôle a priori de~$\norm\phi$.
Il suffit cependant de trouver un nombre réel~$\eps$
qui entraîne la non-densité pour tout homomorphisme~$\phi$
comme ci-dessus.
C'est là qu'intervient la forme effective conjecturale
de la conjecture de Bogomolov.

Lorsque $\norm\phi$ n'est pas trop grande,
on obtient directement la non-densité voulue 
en considérant l'image par~$\phi$
de~$X\times\{\gamma\}$ dans~$A^s$
et en notant qu'elle est de codimension au moins~$1$
et de degré~$\ll\norm{\phi}^{2\dim(X)}$.
Les images par~$\phi$
dans~$\phi(X\times\{\gamma\})$
des points $p=(x,\gamma)$  considérés
sont en effet de hauteur $\ll\eps\norm{\phi}^2$;
leur densité entraîne que $\eps\norm{\phi}^{2\dim(X)+2}\gg 1$.

Dans l'autre cas, on considère l'isogénie~$\psi$
de~$A^g$ de matrice~$ \begin{pmatrix} m\mathrm I_r & L \\ 0 & \mathrm I_{g-r}\end{pmatrix}$
et une composante irréductible~$Y$ de~$[m]^{-1}\psi(X)$
dans~$A^g$; on démontre que son degré est majoré par un multiple
de $\norm{\phi}^{2a(g-r)}$.
Si $p=(x,\gamma)$ vérifie les relations envisagées,
on constate que $\hat h(\psi(x))\ll\max(B,\eps\norm{\phi}^2)$.
Sous l'hypothèse qu'ils forment un ensemble dense dans~$X\times\{\gamma\}$,
la version effective de la conjecture de Bogomolov entraîne
que $\max(B,\eps\norm{\phi}^2)\ll \norm{\phi}^{2/(ag-\dim(X)}$.

On vérifie que si $\eps$ est assez petit (indépendamment de~$\phi$),
ces deux inégalités sont toutes deux fausses, ce qui conclut la démonstration.

\medskip

Sous l'hypothèse que $G$ a une densité positive de réductions ordinaires,
le théorème~\ref{theo.galateau} de \textsc{Galateau} fournit 
une version effective de la conjecture de Bogomolov
à peine plus faible que celle conjecturée.
En outre, cette hypothèse est vérifiée pour l'ensemble fini de quotients
de~$G$ considérés dans la preuve.
En reprenant les calculs ci-dessus, on constate
que cela suffit pour assurer la conclusion du théorème~\ref{theo.viada}.

Compte tenu du théorème~\ref{theo.habegger-epsilon},
il en résulte le théorème:
\begin{theo}
Soit $G$ une variété abélienne banale et soit $X$
une sous-variété (fermée, irréductible) non dégénérée dans~$G$.
Soit $\Gamma$ un sous-groupe de rang fini de~$G(\bQ)$.
Il existe un nombre réel~$\eps>0$
tel que $X(\bQ)\cap \mathscr B(G^{[1+\dim(X)]}+\Gamma,\eps)$
ne soit pas dense dans~$X$ pour la topologie de Zariski.
\end{theo}

En raisonnant comme dans la preuve du corollaire~\ref{coro.remond},
on en déduit un résultat  pour les courbes:
\begin{coro}
Soit $G$ une variété abélienne banale et soit $X$ une courbe
(fermée, irréductible) dans~$G$ qui n'est contenue dans
aucun sous-groupe algébrique strict de~$G$.
Il existe un nombre réel~$\eps>0$
tel que $X(\bQ)\cap \mathscr B(G^{[1+\dim(X)]},\eps)$
soit fini.
\end{coro}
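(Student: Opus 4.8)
The plan is to deduce the statement from the theorem just proved (the one on banale abelian varieties, non-degenerate subvarieties, and finite-rank subgroups) by exactly the reduction used to prove Corollary~\ref{coro.remond}, the only novelty being to carry the $\eps$-neighbourhoods through the construction. If $X$ is a point there is nothing to prove, so assume $X$ is a curve, so that $G^{[1+\dim(X)]}=G^{[2]}$. First I would set up the geometry: let $H$ be the smallest abelian subvariety of~$G$ admitting a translate containing~$X$, and fix $g\in X$ with $X\subset g+H$. By minimality of~$H$, the curve $Y:=X-g$ is non-degenerate in~$H$ (it lies in no translate of a proper abelian subvariety), and since $X$ is contained in no strict algebraic subgroup of~$G$, the image~$\bar g$ of~$g$ in $B:=G/H$ generates~$B$ Zariski-densely. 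As an abelian subvariety of the banale variety~$G$, $H$ is itself banale: by Poincaré complete reducibility $G$ is isogenous to $H\times H'$, hence $H$ is isogenous to a quotient of~$G$, and banality is preserved under isogeny (good reduction and ordinariness being isogeny-invariant) and under passage to quotients.

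Next I would introduce the finite-rank group. Since $\operatorname{Hom}(G,H)$ is finitely generated, the subgroup of~$H(\bQ)$ generated by the points $\psi(g)$, $\psi\in\operatorname{Hom}(G,H)$, has finite rank; let $\Gamma$ be its saturation in~$H(\bQ)$, still of finite rank. Applying the preceding theorem to the non-degenerate curve~$Y$ in the banale abelian variety~$H$ and to the subgroup~$\Gamma$ produces a real number $\eps_0>0$ such that $Y(\bQ)\cap\mathscr B(H^{[2]}+\Gamma,\eps_0)$ is not Zariski-dense in~$Y$; as $Y$ is a curve, this set is \emph{finite}.

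It then remains to show that for $\eps>0$ small enough (depending on~$\eps_0$, on~$g$, and on the chosen ample bundles) every $x\in X(\bQ)\cap\mathscr B(G^{[2]},\eps)$ satisfies $x-g\in Y(\bQ)\cap\mathscr B(H^{[2]}+\Gamma,\eps_0)$; since $x\mapsto x-g$ is injective, the finiteness of the latter set then forces that of $X(\bQ)\cap\mathscr B(G^{[2]},\eps)$, which is the assertion. To verify this, write $x=s+w$ with $s\in G^{[2]}$ and $\hat h(w)\leq\eps$, so that $s$ lies in an algebraic subgroup~$M$ of~$G$ with $\codim_G(M)\geq 2$, and take $M$ minimal for~$s$. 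The image of~$x$ in~$B$ being~$\bar g$, the image of~$s$ in~$B$ is $\bar g-\bar w$, where $\hat h(\bar w)\ll\eps$ by functoriality of the normalized height (Proposition~\ref{prop.heights}, in its refined form for semi-abelian varieties). As in Corollary~\ref{coro.remond} one argues that $M+H=G$: the image of~$M$ in~$B$ is a closed subgroup containing a point within $O(\eps)$ in height of~$\bar g$, and, $\bar g$ generating~$B$ densely, a small enough~$\eps$ forbids this subgroup to be proper. Granting $M+H=G$, the subgroup $M\cap H$ of~$H$ has codimension equal to $\codim_G(M)\geq 2$; decomposing $H$, via Poincaré reducibility, as a near-direct sum of $M^\circ\cap H$ and a complementary abelian subvariety~$H_1$, and noting that the projection $\lambda\colon G\to H_1\subset H$ killing~$M^\circ$ is one of the homomorphisms defining~$\Gamma$, one computes $e_1 h_1=\lambda(x)-\lambda(g)$ for the $H_1$-component $h_1$ of $x-g$; here $\lambda(g)\in\Gamma$ and $\lambda(x)$ is torsion plus a point of height $\ll\eps$, so, using saturatedness of~$\Gamma$ and that $\hat h$ scales, one writes $x-g$ as the sum of a point of $H^{[2]}$ (a coset of $M^\circ\cap H$ by a torsion point), a point of~$\Gamma$, and a point of normalized height $O(\eps)$. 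Choosing $\eps$ with $O(\eps)\leq\eps_0$ completes the argument.

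The main obstacle is precisely the step $M+H=G$ in the presence of the $\eps$-ball: in the torsion-free situation of Corollary~\ref{coro.remond} one uses that~$\bar g$ itself lies in the image of~$M$, whereas here one only knows that a small perturbation of~$\bar g$ does, and one must exclude that such a perturbation drops~$\bar g$ into a proper algebraic subgroup of~$B$; this is handled by the same height-gap mechanism underlying the theorem being invoked (the image of~$g$ in any non-trivial quotient of~$B$ is non-torsion, hence of positive normalized height, and $\eps$ is taken below the relevant threshold). Everything else — the bookkeeping with algebraic subgroups and their Poincaré complements, and the repeated use of functoriality of normalized heights to transport bounds between~$G$, $H$ and~$B=G/H$ — is routine.
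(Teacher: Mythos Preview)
Your approach is exactly the one the paper intends: it proves this corollary by the single sentence ``reason as in the proof of Corollary~\ref{coro.remond}'', and you have faithfully carried that reduction through while tracking the $\eps$-neighbourhoods. The construction of~$H$, $Y=X-g$, and the finite-rank group~$\Gamma\subset H(\bQ)$, the observation that~$H$ is banale as a quotient (up to isogeny) of the banale~$G$, and the final bookkeeping with Poincaré complements are all correct and match what the paper has in mind.

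There is, however, a genuine soft spot at precisely the place you flag as ``the main obstacle''. To force $M+H=G$ you argue that the image $\bar s=\bar g-\bar w$ of~$s$ in $B=G/H$ lies in a proper algebraic subgroup~$\bar M$, that $\bar g$ has positive height in every non-trivial quotient of~$B$, and that one ``takes~$\eps$ below the relevant threshold''. The difficulty is that~$B$ may have infinitely many proper abelian subvarieties (e.g.\ $B=E^2$), hence infinitely many such quotients, and the positive heights you obtain are a priori not bounded below uniformly. What you actually need is
\[
\inf\bigl\{\hat h_B(\bar g-n)\ :\ n\in N(\bQ),\ N\subsetneq B\ \text{algebraic subgroup}\bigr\}>0,
\]
and your sentence does not explain why this infimum is positive. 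It is: one can see it by a finite-dimensional compactness argument (after passing to the isotypic decomposition $B\sim\prod_i B_i^{m_i}$, the proper abelian subvarieties are parametrised by rational points of a finite union of Grassmannians over $\End^0(B_i)\otimes\R$, and the distance from~$\bar g$ to the corresponding subspace extends to a continuous positive function on this compact parameter space), or, more in keeping with the paper, by applying Viada's theorem (part~1, in its unconditional form for banale varieties) to the zero-dimensional subvariety $\{\bar g\}\subset B$: since $\bar g$ lies in no proper subgroup, that theorem gives an~$\eps>0$ with $\{\bar g\}\cap\mathscr B(B^{[1]},\eps)=\emptyset$, which is exactly the uniform gap required. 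Once this is said, the rest of your argument goes through.
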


\subsection{Familles de variétés semi-abéliennes}

Comme je l'ai évoqué dans l'introduction,
\textsc{Pink}  a proposé une généralisation commune
des conjectures d'André--Oort, Manin--Mumford, Mordell--Lang
dans le cadre des variétés de Shimura mixtes.
Une variante de cette dernière conjecture, d'énoncé plus élémentaire,
concerne les familles de variétés semi-abéliennes.

Soit $S$ une variété algébrique complexe et soit $p\colon G\ra S$
un schéma semi-abélien sur~$S$, c'est-à-dire une
famille de variétés semi-abéliennes paramétrée par~$S$.
Pour tout entier~$r$, on note $G^{[r]}$ la réunion, pour $s\in S$,
des sous-ensembles $G_s^{[r]}$ de~$G_s$: un point $g$ de~$G$,
d'image $p(g)\in S$,
appartient à~$G^{[r]}$ si et seulement s'il appartient à 
un sous-groupe algébrique de codimension~$\geq r$
de sa fibre~$G_{p(g)}$.

Dans ces conditions, \textsc{Pink} conjecturait l'énoncé suivant:
\begin{conj}[\cite{pink:2005}, Conjecture~6.2]\label{conj.famille}
Soit $p\colon G\ra S$ un schéma semi-abélien
et soit $X\subset G$ uns sous-variété fermée irréductible.
Si $X$ n'est pas contenue dans un sous-schéma en groupes
strict de~$G$, alors $X\cap G^{[1+\dim(X)]}$ n'est pas
dense dans~$X$ pour la topologie de Zariski.
\end{conj}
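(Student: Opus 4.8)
\emph{Remarque sur la stratégie.} Puisque, fibre à fibre, la conjecture~\ref{conj.famille} n'est autre que la conjecture~\ref{conj.pink}, et qu'elle contient en particulier, en considérant les points de torsion des fibres de~$p$, une forme relative de la conjecture de Manin--Mumford, elle est aujourd'hui largement ouverte; je me borne donc à esquisser le programme qui me semble le plus prometteur, dans l'esprit de la méthode de \textsc{Pila} et \textsc{Zannier} (\cf\cite{pila-zannier:2008,pila:2011}). \emph{Premières réductions}: quitte à dévisser~$G$ en une extension d'un schéma abélien par un schéma en tores, à remplacer~$S$ par un revêtement étale, à raisonner par récurrence sur~$\dim(S)$ --- ce qui permet de supposer que~$X$ domine~$S$, le cas $\dim(S)=0$ étant la conjecture~\ref{conj.pink} --- et à invoquer un argument de spécialisation dans l'esprit de~\cite{bombieri-masser-zannier:2008b} (restant à développer dans ce cadre), on se ramène à supposer que~$X$ et~$G$ sont définis sur~$\bQ$ et que~$p$ provient par image réciproque du schéma semi-abélien universel au-dessus d'une variété de modules. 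On dispose alors d'une description modulaire de~$G$ en termes d'une variété de Shimura \emph{mixte}~$M$, dans laquelle les translatés de sous-schémas en groupes de~$G$ figurent parmi les sous-variétés \emph{faiblement spéciales}; un point de~$X\cap G^{[1+\dim(X)]}$ correspond alors à un point de~$X$ contenu dans une sous-variété faiblement spéciale de codimension~$\geq 1+\dim(X)$ dans~$M$.

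\emph{Partie géométrique.} On établit d'abord l'analogue du théorème~\ref{theo.kirby}: les composantes de dimension strictement positive des intersections \emph{atypiques} (définition~\ref{defi.atypique}) de~$X$ avec les translatés de sous-schémas en groupes de~$G$ sont contenues dans une réunion \emph{finie} de sous-variétés spéciales strictes de~$M$. Cela requiert une variante \og mixte\fg du théorème d'\textsc{Ax} qui sous-tend le théorème~\ref{theo.kirby} --- un théorème de transcendance fonctionnelle de type Ax--Schanuel pour l'uniformisation de~$M$ --- puis, comme dans l'argument de \textsc{Zilber} et \textsc{Kirby}, un passage à l'uniforme par compacité logique. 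Modulo cette étape, qui traite les composantes de dimension~$>0$ de~$X\cap G^{[1+\dim(X)]}$, il reste à prouver que les points \emph{isolés} de cette intersection ne sont pas denses dans~$X$.

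\emph{Partie diophantienne.} On raisonne par l'absurde: supposons ces points isolés denses dans~$X$. Ils sont définis sur~$\bQ$, et l'on attache à chacun une \emph{complexité}~$\Delta(x)$ --- degré du plus petit sous-schéma en groupes de~$G$ dont un translaté contient~$x$, ou ordre de torsion dans le cas Manin--Mumford relatif. Trois ingrédients interviennent alors. \emph{(i)} Une \emph{minoration uniforme d'orbite galoisienne}: le nombre de conjugués de~$x$ sur un corps de nombres fixé est $\gg\Delta(x)^{\delta}$, pour un~$\delta>0$ indépendant de~$x$. \emph{(ii)} Une \emph{majoration de hauteur} de ces points, analogue relatif du théorème~\ref{theo.habegger}: lorsque~$X$ domine~$S$, c'est un délicat théorème de hauteur bornée, la hauteur canonique fibre à fibre étant certes nulle sur les points de torsion, mais la hauteur sur la base devant elle aussi être contrôlée. \emph{(iii)} La \emph{définissabilité}, dans la structure o-minimale~$\R_{\mathrm{an},\exp}$, de la restriction de l'uniformisation $\pi\colon\mathcal D\ra M$ à un domaine fondamental~$\mathcal F$. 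On relève alors~$X$ en une partie définissable $\widetilde X=\pi^{-1}(X)\cap\mathcal F$; les relevés des conjugués de~$x$ sont des points de~$\widetilde X$ situés sur des sous-espaces pré-faiblement spéciaux de complexité bornée par une puissance fixe de~$\Delta(x)$, et~\emph{(ii)} les confine à une partie bornée de~$\mathcal F$. La conjonction de~\emph{(i)}, \emph{(ii)} et~\emph{(iii)} fournit \og beaucoup\fg de tels points, si bien que le théorème de comptage de \textsc{Pila} et \textsc{Wilkie} produit, pour~$\Delta(x)$ assez grand, une partie semi-algébrique connexe de dimension~$>0$ de~$\widetilde X$ contenue dans ce lieu. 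Un théorème de transcendance fonctionnelle de type Ax--Lindemann pour~$M$ entraîne alors que~$X$ contient le translaté d'un sous-schéma en groupes de dimension~$>0$; joint à la partie géométrique et à la récurrence, cela contredit l'hypothèse que~$X$ n'est pas contenue dans un sous-schéma en groupes strict de~$G$.

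\emph{Principal obstacle.} Le point dur est à mon sens l'ingrédient~\emph{(i)}: les minorations uniformes d'orbites galoisiennes --- pour les points de torsion des fibres, ou leurs analogues --- à travers toute la famille, intimement liées aux estimations d'isogénies et aux théorèmes d'image galoisienne ouverte, ne sont connues que dans des cas très particuliers. Les énoncés de transcendance fonctionnelle requis pour les variétés de Shimura \emph{mixtes}, ainsi que l'analogue relatif du théorème~\ref{theo.habegger} de hauteur bornée lorsque~$X$ domine~$S$, en sont des difficultés du même ordre. Signalons enfin qu'une approche alternative calquée sur les \S\S3--4 de ce rapport (majoration de hauteur \emph{à la} \textsc{Habegger}, minorations de type \textsc{Lehmer} ou \textsc{Bogomolov}, théorème de \textsc{Northcott}) se heurterait exactement aux mêmes points durs, la version relative du théorème~\ref{theo.habegger} y étant centrale.
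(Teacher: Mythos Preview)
Il y a ici un problème fondamental: l'énoncé que vous proposez de démontrer est une \emph{conjecture}, et le rapport ne cherche nullement à la prouver. Bien au contraire, le paragraphe qui suit immédiatement son énoncé rappelle le théorème de \textsc{Masser} et \textsc{Zannier} pour un cas très particulier, puis signale que \textsc{Bertrand} a établi que la conjecture~\ref{conj.famille} est \emph{fausse} en toute généralité (théorème~\ref{theo.bertrand}): pour une extension non constante d'une courbe elliptique à multiplications complexes par~$\gm$, il existe une section~$\beta\colon S\ra G$ dont l'image n'est contenue dans aucun sous-schéma en groupes strict, mais telle que l'ensemble des~$s\in S$ avec $\beta(s)$ de torsion dans~$G_s$ soit infini. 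Autrement dit, $X=\beta(S)$ est une courbe, $X\cap G^{[2]}$ est dense dans~$X$, et pourtant $X$ n'est contenue dans aucun sous-schéma en groupes strict.

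Votre esquisse de stratégie \emph{à la} \textsc{Pila}--\textsc{Zannier} n'est donc pas seulement incomplète: elle ne peut aboutir, puisque la conclusion est fausse. Le point précis où votre argument se brise est instructif. Vous affirmez que, dans la description de~$G$ comme variété de Shimura mixte~$M$, \og les translatés de sous-schémas en groupes de~$G$ figurent parmi les sous-variétés faiblement spéciales\fg, et vous en déduisez qu'une partie semi-algébrique positive produite par Pila--Wilkie, combinée à un Ax--Lindemann mixte, fournirait un translaté de sous-schéma en groupes contenu dans~$X$. C'est justement là que le contre-exemple de \textsc{Bertrand} frappe: dans le cadre mixte, il existe des sous-variétés spéciales de~$M$ qui \emph{ne sont pas} des translatés de sous-schémas en groupes par des points de torsion --- les sections de Ribet en sont. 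Votre \og partie géométrique\fg\ (l'analogue du théorème~\ref{theo.kirby}) et votre conclusion par Ax--Lindemann produiraient donc, dans ce cas, une sous-variété spéciale contenant~$X$ qui n'est pas un sous-schéma en groupes, et l'argument s'arrête sans contradiction.

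Restreinte aux schémas \emph{abéliens}, la conjecture reste ouverte et votre stratégie redevient plausible; c'est d'ailleurs celle que mettent en {\oe}uvre \textsc{Masser} et \textsc{Zannier} dans le cas particulier qu'ils traitent. Mais telle que formulée, pour les schémas semi-abéliens, votre proposition vise un énoncé réfuté.
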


Donnons un exemple: prenons pour $S$ le complémentaire de~$\{0,1,\infty\}$
dans la droite projective et soit $p\colon E\ra S$
la famille de Legendre des courbes elliptiques,
définie par l'équation affine $y^2=x(x-1)(x-s)$ dans~$\mathbf A^2_S$.
Soit alors $G=E\times_S E$, le produit fibré de deux copies de~$E$.
L'ensemble $G^{[2]}$ est l'ensemble des couples $(s,e_1,e_2)$
où $s\in S$ et  $e_1,e_2$ sont des points de torsion de la courbe~$E_s$.
Dans ce cas, \textsc{Masser} et \textsc{Zannier}
démontrent
le théorème suivant (\cite{masser-zannier:2011},
voir aussi~\cite{masser-zannier:2010}):
\begin{theo}
Soit $E\ra S$ une famille non constante de courbes elliptiques
et soit $G$ le schéma abélien $E\times_S E$.
Soit $X$ une courbe fermée irréductible dans $G$.
Alors, $X\cap G^{[2]}$ est contenu dans une réunion 
finie de sous-schémas abéliens stricts de~$G$.
\end{theo}
La preuve de ce théorème, d'une nature assez différente
de celles esquissées dans ce rapport, repose sur l'approche
de la conjecture de Manin--Mumford
découverte par \textsc{Pila} et \textsc{Zannier}~\cite{pila-zannier:2008}
et sur un théorème de \textsc{Pila} et \textsc{Wilkie}
concernant les points rationnels des ensembles définissables
dans une structure o-minimale. 

\medskip

Toutefois, en reprenant une construction 
(due à \textsc{K.~Ribet})
de points spéciaux sur des extensions de schémas abéliens
par un tore, D.~\textsc{Bertrand}~\cite{bertrand:2011}
a observé récemment que
la conjecture~\ref{conj.famille} est fausse.

\begin{theo}[\cite{bertrand:2011}, Theorem~1]\label{theo.bertrand}
Soit $E$ une courbe elliptique à multiplications complexes,
soit $S$ une courbe algébrique complexe
et soit $p\colon G\ra S$ un schéma semi-abélien
qui est extension non constante de $E_S$ par $\mathbf G_{\mathrm m,S}$.
Il existe une section $\beta\colon S\ra G$ dont l'image n'est
contenue dans aucun sous-schéma en groupes strict de~$G$,
mais tel que l'ensemble des points~$s\in S$
tels que $\beta(s)$ soit un point de torsion de~$G_s$
soit infini.
\end{theo}
\begin{proof}
Soit $E'$ la courbe elliptique duale de~$E$ (on a $E\simeq E'$,
mais il est plus pratique de les différencier)
et notons $\mathscr P\ra E\times E'$ la biextension de Poincaré.
L'extension~$G$ est donnée par un morphisme non constant
$\gamma\colon S\ra E'$ ;
l'universalité de la biextension
de Poincaré implique que si l'on note $\gamma_E=\gamma\times \Id_E$,
$\gamma_E^*\mathscr P$ est égal au $\gm$-torseur sur~$E_S$
donné par~$G$.
Il y a ainsi une bijection canonique entre l'ensemble des sections 
$\beta\colon S\ra G$
et celui des couples formé d'un un morphisme~$q$ de~$S$ dans~$E$ et 
d'une trivialisation du
fibré en droites~$(\gamma\times q)^*\mathscr P$ sur~$S$.

Soit $f\colon E'\ra E$ une isogénie  telle que $f'\neq f$
et notons $g$ l'isogénie antisymétrique donnée par $g=f-f'$.
La bidualité  de~$\mathscr P$ entraîne l'existence
d'un isomorphisme canonique
\[ (\gamma,f\circ\gamma)^*\mathscr P
\simeq (\gamma,f'\circ\gamma)^*\mathscr P .\]
Comme $\mathscr P$ est une biextension, on a donc
\[ (\gamma,(f-f')\circ\gamma))^*\mathscr P \simeq \mathscr O_S, \]
d'où l'existence d'une section canonique $\beta\colon S\ra G$
relevant $g\circ\gamma\colon S\ra E$.

Pour tout $s\in S$ tel que $\gamma(s)$ est un point de torsion de~$E$,
on démontre le point~$\beta(s)$ est de torsion.
Toutefois,  comme l'extension~$G$ n'est pas constante,
les seuls sous-schémas en groupes stricts de~$G$
sont ou bien finis sur~$S$, ou bien de la forme $T\gm$,
où $T$ est fini sur~$S$. Comme $g\circ\gamma$ n'est pas constante,
elle n'est pas d'ordre fini et $\beta(S)$ n'est contenue
dans aucun sous-schéma en groupes strict de~$G$.
\end{proof}

Comme l'explique \textsc{Bertrand} dans son article,
ce contre-exemple s'interprète parfaitement dans
le cadre de la conjecture générale de \textsc{Pink}
(\cite{pink:2005}, conjecture~1.3). 
Avec les notations du théorème~\ref{theo.bertrand},
l'image $\beta(S)$ provient d'une sous-variété spéciale d'une
sous-variété de Shimura mixte. Autrement dit,
contrairement au cas absolu,
toutes les sous-variétés spéciales d'un schéma semi-abélien
ne sont pas des translatés de sous-schémas abéliens par
des points de torsion. 

Il est intéressant de remarquer que les points de Ribet analogues
à ceux du théorème précédent, lorsque $S$ est un point,
fournissent des contre-exemples à une généralisation hâtive
des conjectures~\ref{conj.lehmer}  et~\ref{conj.lehmer-relatif}
aux variétés semi-abéliennes générales.

Restreinte au cas des schémas abéliens,
la conjecture~\ref{conj.famille} est une conséquence
de la conjecture générale de Pink et reste ouverte.

\subsection*{Remerciements}
Je voudrais remercier D.~\textsc{Bertrand},
S.~\textsc{David}, A.~\textsc{Galateau}, D.~\textsc{Masser}, 
G.~\textsc{Rémond}, E.~\textsc{Ullmo}
et U.~\textsc{Zannier} pour leur aide 
dans la préparation de ce rapport.

En outre, les dix années qui séparent l'article
fondateur~\cite{bombieri-masser-zannier:1999}
et la preuve des résultats exposés dans ce rapport
ont bien sûr vu 
de nombreux progrès intermédiaires ; je n'ai pas
pu tous les mentionner  et m'en excuse auprès de leurs auteurs.

 \def\bibliofont{
 \itemsep0pt\parsep0pt\parskip0pt}


\bibliographystyle{smfplain}
\bibliography{aclab,acl,newbib2}

\end{document}